\documentclass[a4paper,reqno]{amsart}

\usepackage[a4paper,left=3cm,right=3cm,top=3cm,bottom=3cm]{geometry}

\usepackage{graphicx} 
\usepackage{tikz}
\usetikzlibrary{patterns,shapes,decorations.pathmorphing,decorations.pathreplacing,calc,arrows,cd}
\usetikzlibrary{tikzmark, fit, matrix, decorations.markings}
\usepackage{mathdots}
\usepackage{amssymb}
\usepackage{amstext}
\usepackage{amsmath}
\usepackage{amscd}
\usepackage{amsthm}
\usepackage{amsfonts}

\usepackage{array}
\usepackage{enumerate}
\usepackage{latexsym}
\usepackage{mathrsfs}

\usepackage[all,color]{xy}
\usepackage{mathtools}

\usepackage{pgfplots}
\pgfplotsset{compat=1.16}

\usepackage{booktabs}
\usepackage{multirow}
\xyoption{all}
\usepackage{pstricks}
\usepackage{comment}
\usepackage{bm}

\usepackage{todonotes}
\usepackage{tikz}

\numberwithin{equation}{section}
\newtheorem{theorem}{Theorem}[section]

\newtheorem{corollary}[theorem]{Corollary}
\newtheorem{lemma}[theorem]{Lemma}
\newtheorem*{lemma*}{Lemma}
\newtheorem*{theorem*}{Theorem}
\newtheorem{proposition}[theorem]{Proposition}
\newtheorem{definition-theorem}[theorem]{Definition-Theorem}
\newtheorem{definition-proposition}[theorem]{Definition-Proposition}

\theoremstyle{definition}
\newtheorem{definition}[theorem]{Definition}

\newtheorem{example}[theorem]{Example}

\newcommand{\RR}{\mathcal{R}}
\newcommand{\DD}{\mathcal{D}}
\newcommand{\BB}{\mathcal{B}}
\newcommand{\LL}{\mathcal{L}}
\newcommand{\XX}{\mathcal{X}}

\newcommand{\TT}{\mathfrak{T}}

\newcommand{\UU}{\mathcal{U}}

\newcommand{\YY}{\mathcal{Y}}

\newcommand{\id}{{\rm id}}
\newcommand{\mR}{\mathbb{R}}

\newcommand{\Hom}{\operatorname{Hom}\nolimits}

\newcommand{\RHom}{\mathbf{R}\strut\kern-.2em\operatorname{Hom}\nolimits}

\newcommand{\Image}{\operatorname{Im}\nolimits}
\newcommand{\coim}{\operatorname{Coim}\nolimits}

\newcommand{\im}{\operatorname{Im}\nolimits}
\newcommand{\Ex}{\operatorname{Ex}\nolimits}

\DeclareMathOperator{\omR}{\overline{\mR}}

\DeclareMathOperator{\pfdk}{\it \mathsf{Rep}^{\rm pfd}_k}
\DeclareMathOperator{\Rep}{\mathsf{Rep}}


\DeclareMathOperator{\Top}{\mathsf{Top}}
\DeclareMathOperator{\Simp}{\mathsf{Simp}}
\DeclareMathOperator{\Vect}{\mathsf{Vect}}

\usepackage{hyperref}


\newcommand{\old}[1]{{\red #1}}


\date{\today}

\begin{document}

\title{Stability of Bipath Persistence Diagrams}

\date{\today}

\author{Shunsuke Tada}
\address{Shunsuke Tada,
  Graduate School of Human Development and Environment, Kobe University, 3-11 Tsurukabuto, Nada-ku, Kobe 657-8501 Japan}
\email{205d851d@stu.kobe-u.ac.jp}

\thanks{\emph{MSC2020:} 55N31, 16G20}
\thanks{\emph{Keywords:} topological data analysis, persistent homology, bipath persistent homology, stability theorem, isometry theorem}

\begin{abstract}  
Recently, bipath persistent homology has been proposed as an extension of standard persistent homology, along with its visualization (bipath persistence diagram) and computational methods. 
In the setting of standard persistent homology, the stability theorem with respect to real-valued functions on a topological space is one of the fundamental results, which gives a mathematical justification for using persistent homology to noisy data. In proving the stability theorem, the algebraic stability theorem/the isometry theorem for persistence modules plays a central role.
In this point of view, the stability property for bipath persistent homology is desired for analyzing data. 
In this paper, we prove the stability theorem of bipath persistent homology with respect to \emph{bipath functions} on a topological space.
This theorem suggests a stability of bipath persistence diagrams: small changes in a bipath function (except at their ends) result in only small changes in the bipath persistence diagram.
Similar to the stability theorem of standard persistent homology, we deduce the stability theorem of bipath persistent homology by using the algebraic stability theorem/the isometry theorem of bipath persistence modules.
\end{abstract}

\maketitle
\setcounter{tocdepth}{2}
\tableofcontents

\section{Introduction}\label{Section:intro}

Persistent homology \cite{frosini1999size},\cite{landi1997new},\cite{robins1999towards},\cite{carlsson2005computing} is one of the main tools in topological data analysis. It captures the persistence of topological features (e.g., connected components, holes, cavities, or higher dimensional holes) by a collection of intervals, called the persistence diagram. 
This tool allows us to study hidden structures in the data and has been applied in various fields including material science   \cite{hiraoka2016hierarchical}, the field of evolutionary biology  \cite{evolution}, and others
\cite{mcguirl2020topological},\cite{aktas2019persistence},\cite{belchi2018lung},\cite{PUN_PH},\cite{ESCOLAR2025101088} for example. 

A fundamental construction of a persistence diagram of a given real-valued function $f$ on a topological space $X$ is as follows: (1) Construct a sublevelset filtration $(f\leq \cdot)$ of $f$, which is given by  $(f \leq r):=\{x \in X  \mid f(x) \leq r \}$ for each $r\in \mR$. (2) Apply the $q$th homology functor (with coefficients in a field) to the filtration.  We obtain a persistent homology $V(f)$ of the function $f$, consisting of the following data: \[
    \{H_q((f\leq s)) \}_{s\in \mR} \text{, and } \{ H_q((f\leq s)) \to H_q((f\leq t))  \}_{s\leq t}.
    \] 
    (3) If $V(f)$ is a pointwise finite-dimensional (pfd for short) $\mR$-persistence module, then $V(f)$ is decomposed into the so-called \emph{interval modules} by a structure theorem for $\mR$-persistence modules \cite{crawley2015decomposition}. We collect the intervals and obtain the persistence diagram $\mathscr{B}(V(f))$ of the function $f$.
    
In the theory of persistent homology and its application to data analysis, the stability theorem \cite{Stability2007}, \cite{chazal2009proximity} for the persistent homology of a real-valued function is one of the fundamental results. It guarantees that small changes in input function $f$ result in small changes in its persistence diagram $\mathscr{B}(V(f))$, providing a mathematical justification for using persistent homology to analyze noisy data.

The algebraic stability theorem/the isometry theorem \cite{chazal2009proximity},\cite{Les2015inducedmatch},\cite{chazal2016structure},\cite{Lesnick2015interleaving} for pfd $\mR$-persistence modules plays a central role in proving the stability theorem of persistent homology. Indeed, once we have the algebraic stability theorem/the isometry theorem, we can immediately deduce the stability theorem of persistent homology.

Recently, \emph{bipath persistent homology} \cite{aoki2024bipathpersistence} has been introduced as an extension of persistent homology. It can be understood as a persistence module over finite \emph{bipath poset} $B_{n,m}:$
\begin{equation*}
 B_{n,m} \colon \ 
  \begin{tikzcd}[row sep=0.1em,column sep = 1.4em, inner sep=0pt]
    & 1 \rar[] & 2 \rar[] & \cdots \rar[] & n \ar[dr] & \\
    {-\infty} \ar[ur] \ar[dr] & & & &  & {+\infty}. \\
    & 1' \rar[] & 2' \rar[] & \cdots \rar[] & m' \ar[ur] &
  \end{tikzcd}
\end{equation*}
This allows us to study the persistence of topological features across a \emph{bipath filtration}, a pair of filtrations connected at their ends to compare the two filtrations, as shown below:
\begin{equation*}
  S\colon \ 
  \begin{tikzcd}[row sep=0.1em,column sep = 1.4em, inner sep=0pt]
    & S_1 \rar[hookrightarrow] & S_2 \rar[hookrightarrow] & \cdots \rar[hookrightarrow] & S_n \ar[dr,hookrightarrow] & \\
    S_{-\infty} \ar[ur,hookrightarrow] \ar[dr,hookrightarrow] & & & &  & S_{+\infty}. \\
    & S_{1'} \rar[hookrightarrow] & S_{2'} \rar[hookrightarrow] & \cdots \rar[hookrightarrow] & S_{m'} \ar[ur,hookrightarrow] &
  \end{tikzcd}
\end{equation*}

One important property of bipath persistent homology is that it decomposes into interval modules \cite{aoki2023summand}, \cite{aoki2024bipathpersistence}. This property allows for intuitive visualization of the persistence of topological features across bipath filtrations by \emph{bipath persistence diagrams} \cite{aoki2024bipathpersistence}, see also Subsection~\ref{subsec:finitebipath}. In this research direction, computational methods for bipath persistence diagrams have been proposed in \cite{aoki2024bipathpersistence}, \cite[Section 5]{asashiba2024IntMultp}, \cite[Section 4]{alonso2024Bipath}.
Moreover, from a theoretical perspective, fibered arc codes \cite{alonso2024Bipath} for two-parameter persistent homology have been considered and compared with the fibered barcode \cite{lesnick2015interactive} and the interval
rank invariant \cite{Asa2024IntReplacement}.

For an application of bipath persistent homology in data analysis, a stability property of bipath persistent homology is required.

In this paper, we show the stability of bipath persistent homology.
To be more precise, we set the (continuous) bipath poset $B$ (Definition~\ref{def:bipathposet}), where the underlying set is given by $(\bigsqcup_{i=1,2}\mR\times \{i\}) \sqcup \{\pm \infty\}$, displayed below:
\begin{equation*}
\begin{tikzpicture}
    \coordinate (b) at (2,0);
    \coordinate (minf) at (1,-0.5); 
     \coordinate (maxf) at (9,-0.5); 
    \coordinate (d) at ($(8,0)$);
    \draw[line width = 0.3mm] ($(b)$)--($(d)$); 
        \draw[line width = 0.3mm] ($(b)-(0,1)$)--($(d) -(0,1)$); 
   \node at ($0.5*(b)+0.5*(d)$) [above]{$\mathbb{R}\times \{1\}$}; 
      \node at ($0.5*(b)+0.5*(d)-(0,1)$) [above]{$\mathbb{R}\times \{2\}$}; 
  \node at ($(minf)$) [left]{$-\infty$}; 
  \node at ($(minf)-(1,0)$) [left]{$B \colon$}; 
   \node at ($(minf)$) [right]{$\bullet$}; 
  \node at ($(maxf)$) [right]{$+\infty.$}; 
   \node at ($(maxf)$) [left]{$\bullet$}; 
\end{tikzpicture}    
\end{equation*}
For a topological space $X$, a \emph{bipath function} $f=(f_1,f_2)$ is a pair of $B$-valued maps on $X$ such that $\Image(f_i) \subseteq (\mR\times \{i\}) \cup \{\pm \infty\}$ ($i=1,2$) with $f_1^{-1}(\{-\infty\}) =f_2^{-1}(\{-\infty\})$.
We construct the bipath persistence diagram of a bipath function using a procedure similar to that of persistent homology, as explained above.
\begin{enumerate}
    \item Construct a \emph{bipath sublevelset filtration} $(f\leq \cdot)$ of the bipath function $f$ by
\begin{equation*}
       (f\leq b) \coloneqq
        \begin{cases}
         f_1^{-1}(\{- \infty\}) & \text{if } b= - \infty,
         \\
         X & \text{if } b= + \infty,
         \\
        \{x \in X \ | \  f_i(x) \leq r \} & \text{if $b =(r,i) \in \mathbb{R}\times \{i\}$} \text{ for $i=1,2$}. 
        \end{cases}
\end{equation*}
    \item Apply the $q$th homology functor (with some coefficient in a field) to the bipath sublevelset filtration. We obtain a $B$-persistence module (\emph{bipath persistence module}) $V(f)$ which we call \emph{bipath persistent homology} of $f$.
    If $V(f)$ is a pfd bipath persistence module, then we say that $f$ is \emph{tame}.
    \item  If $f$ is tame, by structure theorem (Theorem \ref{thm:decomp}), $V(f)$ is decomposed into interval modules. We collect the interval modules and obtain the bipath persistence diagram $\mathscr{B}(V(f))$ of the bipath function $f$. 
\end{enumerate}

As an analogue of the stability theorem of the standard persistent homology, we show the stability theorem of bipath persistent homology (Theorem \ref{thm:stability}) with respect to bipath functions.
 The theorem suggests that small changes in a bipath function $f$ (except at its ends) result in small changes in its bipath persistence diagram $\mathscr{B}(V(f))$. 
This result suggests a mathematical justification for the use of bipath persistent homology to analyze noisy data.

The stability theorem is followed by the algebraic stability theorem /the isometry theorem for bipath persistence modules (Theorem \ref{thm:isometry}).

\vskip.2\baselineskip
\noindent
{\bf{Related work.}}

Recently, Alonso and Liu obtained a stability result on $B_{n,m}$-persistence modules \cite[Theorem 3.5]{alonso2024Bipath}, which is derived from the algebraic stability theorem of the zigzag-persistence modules (\cite{Les2018AlgebraicStability} \cite[Theorem 3.2]{bjerkevik2021onthesatbility}).


The difference of Theorem \ref{thm:isometry} and 
\cite[Theorem 3.5]{alonso2024Bipath} arises from interleaving distances, which are defined by using translations: 
    In this paper, we use translations on the bipath poset $B$ given by \eqref{eq:superfamilyB}, whereas they use translations on a subset of $\mR^{{\rm op}}\times \mR$ to apply the algebraic stability theorem of zigzag-persistence modules. Thus, these interleaving distances are different and, therefore, as-is incomparable.

This work is partially based on the author's Ph.D. thesis.
\section{Preliminaries}

In this section, we give the basics of persistence modules and the distances between persistence modules.

Throughout, let $k$ be a field. We denote by $\Vect_k$ the category of $k$-vector spaces.

\subsection{Persistence modules}\label{subsec:persistence}
Let $P=(P,\leq )$ be a (not necessarily finite) partially ordered set (poset for short). 
     We regard $P$ as a category whose objects are elements of $P$ and there exists a unique morphism $p \to q$ if and only if $p \leq q$.

    We denote by $\Rep_k(P)$ the functor category from $P$ to $\Vect_k$. A \emph{persistence module} over $P$ (or, $P$-persistence module) is an object in $\Rep_k(P)$.  
    For a $P$-persistence module $V$, we denote by $V_p$ the vector space associated with $p \in P$, and by $V(p,q)$ the $k$-linear map associated with $p\leq q$.  
    A morphism $\alpha \colon V\to W$ between $P$-persistence modules is a natural transformation. That is, it is a family of $k$-linear maps $\alpha=(\alpha_p\colon V_p \to W_p )_{p\in P}$ such that $ \alpha_q \circ {V}(p,q) = W (p,q) \circ \alpha_p$ for any $p\leq q$. 
    We denote by $\Hom_P(V,W)$ the set of morphisms from $V$ to $W$.

For posets $P$ and $Q$, a map $h \colon P \to Q$ is called an order-preserving map if $p\leq q$ in $P$ implies $h(p) \leq h(q)$ in $Q$. An order-preserving map is called an isomorphism if it is a bijection and its inverse is also order-preserving.
For an order-preserving map $h \colon P \to Q$, we have a functor $h^* \colon \Rep_k(Q) \to \Rep_k(P)$ given as follows:
\begin{itemize}
    \item For $V\in \Rep(Q)$, a $P$-persistence module $h^*(V)$ is given by $h^*(V)_p := V_{h(p)}$  and $h^*(V)(p,q)  := V(h(p),h(q))\colon V_{h(p)} \to V_{h(q)}$ for each $p\leq q$ in $P$.
    \item For $\alpha \colon V \to W$ in $\Rep(Q)$, a morphism $h^*(\alpha)  \colon h^*(V) \to h^*(W)$ is given by $h^*(\alpha)_p:=\alpha_{ h(p)} \colon V_{h(p)} \to W_{h(p)}$ for each $p\in P$.
\end{itemize}

    The category of \emph{pointwise finite-dimensional $P$-persistence modules} (pfd $P$-persistence modules for short), denoted by $\pfdk(P)$, is the full subcategory of $\Rep_k(P)$ consisting of all $P$-persistence modules $V$ such that $V_p$ is a finite-dimensional $k$-vector space for each $p \in P$.  
    In this setting, we have the following result, which asserts that pfd persistence modules are uniquely decomposed into indecomposable pfd persistence modules.

    \begin{theorem}[cf. {\cite[Theorem 1.1]{botnan2020decomposition}, \cite[Theorem 1 (ii)]{Azumaya_1950}}]
        Let $P$ be a poset. Any pfd $P$-persistence module is uniquely decomposed into indecomposable pfd $P$-persistence modules up to isomorphism and permutations of terms.
    \end{theorem}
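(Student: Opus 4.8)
The plan is to deduce the statement from the classical Krull--Remak--Schmidt--Azumaya theorem (the cited result of Azumaya): if an object of an additive category is written as a direct sum $\bigoplus_{i\in I}M_i$ in which every $\End(M_i)$ is a \emph{local} ring, then this decomposition is unique up to isomorphism and reordering of the summands, and moreover any decomposition of the object into indecomposables must be of this form. Granting this, it suffices to prove two assertions: (a) \textbf{existence}: every pfd $P$-persistence module $V$ is a direct sum of indecomposable pfd $P$-persistence modules; and (b) \textbf{locality}: $\End_P(W)$ is a local ring for every indecomposable pfd $P$-persistence module $W$.

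For (b) I would argue by a pointwise Fitting lemma. Fix $\phi\in\End_P(W)$ and, for each $p\in P$, set $(\Image\phi^{\infty})_p:=\bigcap_{n\geq 1}\Image(\phi_p^{\,n})$ and $(\Ker\phi^{\infty})_p:=\bigcup_{n\geq 1}\Ker(\phi_p^{\,n})$; since $W_p$ is finite-dimensional, Fitting's lemma gives $W_p=(\Image\phi^{\infty})_p\oplus(\Ker\phi^{\infty})_p$. The first step is to check that these assignments define sub-persistence-modules of $W$: for $p\leq q$, iterating naturality yields $W(p,q)\phi_p^{\,n}=\phi_q^{\,n}W(p,q)$, whence $W(p,q)(\Image\phi^{\infty})_p\subseteq\bigcap_n\Image(\phi_q^{\,n})=(\Image\phi^{\infty})_q$, and likewise $W(p,q)(\Ker\phi^{\infty})_p\subseteq(\Ker\phi^{\infty})_q$. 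Hence $W=\Image\phi^{\infty}\oplus\Ker\phi^{\infty}$ in $\pfdk(P)$, and indecomposability of $W$ (which is nonzero) forces exactly one summand to vanish. If $\Image\phi^{\infty}=0$, then each $\phi_p$ is nilpotent, so each $\id_{W_p}-\phi_p$ is invertible and $\id_W-\phi$ is an isomorphism; if $\Ker\phi^{\infty}=0$, then each $\phi_p$ is injective, hence bijective, so $\phi$ is an isomorphism. Thus for every $\phi$, either $\phi$ or $\id_W-\phi$ is a unit, which is exactly the assertion that $\End_P(W)$ is local.

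Assertion (a) is the main obstacle, and for it I would follow the argument of Botnan--Crawley-Boevey. The key observation is that pointwise finite-dimensionality tames infinite direct sums: given any orthogonal family $\{e_i\}_{i\in I}$ of idempotents in $\End_P(V)$, for each fixed $p$ only finitely many $(e_i)_p$ are nonzero, so the ``infinite sum'' $e:=\sum_i e_i$ is a well-defined idempotent of $\End_P(V)$ (evaluated at each $p$ it is a finite sum) and $\Image(e)=\bigoplus_i\Image(e_i)$ is a genuine direct summand of $V$. One then uses Zorn's lemma to select a maximal orthogonal family $\{e_i\}$ of \emph{primitive} idempotents, forms the resulting summand $\bigoplus_i\Image(e_i)$, and shows---using part (b) and, crucially, the finiteness of each $V_p$ to refine any nonzero idempotent of the complementary summand into one dominating a primitive idempotent---that the complement must be zero; this yields the desired decomposition into indecomposables, each with local endomorphism ring by (b). The delicate technical points are precisely the legitimacy of these pointwise-infinite sums as genuine morphisms of persistence modules and the refinement step producing primitive idempotents, and I would invoke \cite{botnan2020decomposition} for the details of this part.
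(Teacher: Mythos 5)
Your proposal correctly reconstructs the proof that the paper itself delegates to the cited references: Azumaya's Krull--Remak--Schmidt theorem for uniqueness (given local endomorphism rings), the pointwise Fitting-lemma argument for locality, and Botnan--Crawley-Boevey's idempotent/Zorn argument for existence. This is essentially the same approach as the paper, which cites these results without reproducing the proof.
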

 Thanks to the above theorem, any pfd $P$-persistence module $V$ is the form $V\cong \bigoplus_{\lambda \in \Lambda } V_\lambda$, where each $V_\lambda$ is a pfd indecomposable $P$-persistence module. We denote by $\mathscr{B}(V)$ the multiset $ \{V_{\lambda}\}_{\lambda \in \Lambda}$, which is uniquely determined up to isomorphism.

    In this paper, intervals and interval modules are central objects. 

\begin{definition} 
We say that a subset $P'$ of $P$ is an \emph{interval} if it satisfies the following two conditions:
\begin{enumerate}
    \item \emph{Convexity}: For $p,q \in P'$ and $r \in P$, if $p \leq r \leq q$, then $r \in P'$.
    \item \emph{Connectivity}: If $p,q \in P'$, there exists a sequence $p = z_0, z_1, \hdots , z_{\ell} = q$ of elements of $P'$ such that $z_{i-1}$ and $z_i$ are comparable for any $i \in \{ 1,\hdots,\ell \}$. 
\end{enumerate}
 We denote by $\mathbb{I}(P)$ the set of all intervals of $P$. 
\end{definition}
Notice that any convex set is a disjoint union of intervals.

 \begin{definition}
     Let $P$ be a poset.
     \begin{itemize}
         \item [{\rm (1)}]
For a non-empty subset $A$ of $P$, we define an \emph{upset} $A^\uparrow := \{ 
p \in P \mid \exists a \in A  \text{ such that } \ a \leq p \}$ and a \emph{downset} $A^\downarrow := \{ 
p \in P \mid \exists a \in A \text{ such that } \ p \leq a \}$, which are clearly convex in $P$. In addition, we set $\emptyset^{\uparrow}:= P$ and $ \emptyset^{\downarrow}: = P$.
\item  [{\rm (2)}] For $p,q\in P$, we have an interval $[p,q]:=\{r\in P \mid p\leq r\text{ and } r\leq q \}$ called a \emph{segment} of $P$.
     \end{itemize}
 \end{definition}

We have a well-known characterization of convex sets.

\begin{lemma}\label{lem:convex}
Let $C$ be a subset of $P$. Then, $C$ is convex if and only if $C^\uparrow \cap C^\downarrow = C$ holds.
\end{lemma}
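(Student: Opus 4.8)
The plan is to prove both implications by directly unwinding the definitions of $C^\uparrow$, $C^\downarrow$, and convexity; no auxiliary constructions are needed. The starting observation is the trivial inclusion $C \subseteq C^\uparrow \cap C^\downarrow$, valid for any non-empty $C$: every $c \in C$ satisfies $c \le c$, so $c$ itself witnesses both $c \in C^\uparrow$ and $c \in C^\downarrow$. Hence the substantive content of the lemma is the reverse inclusion $C^\uparrow \cap C^\downarrow \subseteq C$, and this is exactly where the hypothesis of convexity (respectively, its consequence) is used.

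For the forward direction (if $C$ is convex, then $C^\uparrow \cap C^\downarrow = C$), I would take $r \in C^\uparrow \cap C^\downarrow$; by definition of $C^\uparrow$ there is some $a \in C$ with $a \le r$, and by definition of $C^\downarrow$ there is some $b \in C$ with $r \le b$. Thus $a \le r \le b$ with $a,b \in C$, so convexity of $C$ forces $r \in C$. Combined with the trivial inclusion above, this gives $C^\uparrow \cap C^\downarrow = C$. For the converse (if $C^\uparrow \cap C^\downarrow = C$, then $C$ is convex), I would take $p,q \in C$ and $r \in P$ with $p \le r \le q$; then $p \le r$ exhibits $r \in C^\uparrow$ and $r \le q$ exhibits $r \in C^\downarrow$, so $r \in C^\uparrow \cap C^\downarrow = C$, which is precisely the convexity condition.

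There is no real obstacle: each direction is a one-line verification once the definitions are expanded. The only point that merits a word of care is the edge case $C = \emptyset$: under the stated convention $\emptyset^\uparrow = \emptyset^\downarrow = P$, the equivalence is to be read for non-empty $C$ (which is the implicit setting of the surrounding discussion, since $A^\uparrow$ and $A^\downarrow$ are introduced for non-empty $A$), and I would either state this restriction explicitly or simply note that the empty interval is handled trivially.
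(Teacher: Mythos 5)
Your proof is correct and follows the same direct definition-unwinding route the paper would use: take an element of the intersection, extract the two witnesses from $C$, and apply convexity; conversely, given $p\le r\le q$ with $p,q\in C$, observe $r\in C^\uparrow\cap C^\downarrow=C$. Your side remark about $C=\emptyset$ is well taken: under the paper's convention $\emptyset^\uparrow=\emptyset^\downarrow=P$ the equivalence fails for the (vacuously convex) empty set whenever $P\neq\emptyset$, so the lemma should indeed be read for non-empty $C$, which is the only case the paper subsequently uses.
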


We will recall the definition of interval modules.

\begin{definition}
Let $P$ be a poset and $I$ be an interval of $P$. We define the pfd $P$-persistence module $k_I$ by
  \begin{equation}
        (k_I)_p :=
        \begin{cases}
        k & \text{if $p\in I$}, \\
        0 & \text{otherwise,}
        \end{cases}\quad \quad
        {k_I}(p,q) :=
        \begin{cases}
        \id_k & \text{if $p, q\in I$ with $p\leq q$}, \\
        0 & \text{else.}
        \end{cases}
    \end{equation}

An \emph{interval module} over $P$ is a $P$-persistence module isomorphic to $k_I$ for an interval $I$ of $P$.
   
\end{definition} 
Any interval module is an indecomposable $P$-persistence module \cite[Corollary 5.6]{blanchette2021homological}.
 We say that a pfd $P$-persistence module $V$ is \emph{interval-decomposable} if $V$ is decomposed into interval modules. 
Any interval-decomposable $P$-persistence module $V$ is of the form $V\cong \bigoplus_{I\in \mathbb{I}(P)} k_I^{m_V(I)}$, where $m_V(I)$ is a non-negative integer for each $I\in \mathbb{I}(P)$. In this case, we identify 
 \[ \mathscr{B}(V) =\{k_I \text{ with the multiplicity $m_V(I)$} \}\]
 with the multiset of intervals
 \[
 \{I\in \mathbb{I}(P) \text{ with the multiplicity $m_V(I)$}\},
 \]
if there is no risk of confusion.

For two intervals $I$ and $J$ of $P$, let $\Omega(I,J)$ be the set of all connected components $C$ of $I \cap J$ satisfying
\begin{equation}\label{eq:condition}
    I \cap C^\downarrow \subseteq C \text{ and } 
    J\cap C^\uparrow \subseteq C. 
\end{equation}

\begin{lemma}[{\cite[Proposition 5.5]{blanchette2021homological}}]\label{lem:well-def}
Let $I$ and $J$ be intervals of $P$. For each connected component $C$ of $I\cap J$ in  $\Omega(I,J)$, we have a morphism $\phi_C \colon k_I \to k_J$ given by 
\begin{equation*}
    (\phi_C)_p := 
    \begin{cases}
          \id_k  & \text{ if } p \in C, \\
           0 & \text{ else.}
    \end{cases}
\end{equation*}
Moreover, the set $\{\phi_C \mid C \in \Omega(I,J)  \}$ forms a basis of $\Hom_{P} (k_I,k_J)$.  In particular, $\Omega(I,J)=\emptyset$ if and only if $\Hom_{P} (k_I,k_J) =0$.
\end{lemma}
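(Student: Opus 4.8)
The statement to prove is Lemma~\ref{lem:well-def}, characterising the morphism space $\Hom_P(k_I, k_J)$ between interval modules via the set $\Omega(I,J)$ of suitable connected components of $I \cap J$. Since this is quoted as \cite[Proposition 5.5]{blanchette2021homological}, the plan is to reconstruct a self-contained proof.

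First I would fix a morphism $\alpha \colon k_I \to k_J$ and observe that each component $\alpha_p$ is a linear map $k \to k$ when $p \in I \cap J$ and is forced to be $0$ otherwise (since either the source or target is $0$); so $\alpha$ is determined by a scalar function $p \mapsto \lambda_p \in k$ on $I \cap J$. The naturality squares for $p \le q$ with $p,q \in I \cap J$ say $\lambda_q = \lambda_p$, hence $\lambda$ is constant on each connected component $C$ of $I \cap J$; write $\lambda_C$ for that value. The remaining naturality constraints are those involving $p \le q$ with exactly one of $p,q$ in $I \cap J$: if $p \in I \cap J$ but $q \notin I \cap J$, then $q$ may still lie in $I$ (forcing no constraint, since $k_J$ is $0$ at $q$) but if $q \in J \setminus I$... actually the binding case is $p \in I$, $p \in J$, $q \in J$, $q \notin I$: then $k_I(p,q) = 0$ while $k_J(p,q) = \id$, so naturality gives $\lambda_q' := 0 = \lambda_p$, i.e. such a configuration forces $\lambda_p = 0$. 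Dually, $q \in I \cap J$, $p \in I$, $p \notin J$, $p \le q$ forces $\lambda_q = 0$. Translating: a component $C$ can carry a nonzero scalar precisely when no element of $C$ is the source of an "$I$-breaking, $J$-preserving" comparison and no element is the target of a "$J$-breaking, $I$-preserving" comparison — which I would show is exactly the condition $I \cap C^{\downarrow} \subseteq C$ and $J \cap C^{\uparrow} \subseteq C$, using convexity of $I$ and $J$ to propagate membership along comparabilities.

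Next I would check that for each $C \in \Omega(I,J)$ the prescription $(\phi_C)_p = \id_k$ on $C$ and $0$ elsewhere genuinely defines a morphism: the only naturality squares that could fail are those straddling the boundary of $C$, and the defining inclusions \eqref{eq:condition} are precisely what rule those out — e.g. if $p \in C$, $q \in J$, $p \le q$, then $q \in I \cap C^{\uparrow}$... one must be slightly careful to use $q \in I$ (from $p \le q$, $p \in I$, $I$ convex — no, $I$ convex does not give $q \in I$; rather one uses $q \in I \cap J$ would follow if...). Here I would argue componentwise: the potentially bad squares are $p \in C$, $q \notin C$, $p \le q$ (need $k_J(p,q)\circ(\phi_C)_p = 0$, i.e. need $q \notin J$ or, if $q \in J$, a contradiction) and symmetrically $p \notin C$, $q \in C$. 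For the first: if $q \in J$ as well, then since $p \le q$ and $p \in I$... we still need $q \in I$ to land in $I \cap J$ and then connectivity would force $q \in C$; so the real content is showing $q \in I$, which uses $q \in I \cap C^{\uparrow}$—hmm, $C^{\uparrow}$ gives $q \in C^{\uparrow}$, and $q \in J$, but $J \cap C^{\uparrow} \subseteq C$ gives $q \in C$, contradiction. Good — so the $J$-side inclusion handles the $p \in C$, $q \notin C$ squares, and the $I$-side inclusion handles the mirror case. Finally, linear independence of $\{\phi_C\}_{C \in \Omega(I,J)}$ is immediate since they have disjoint supports, and spanning follows from the first paragraph's analysis showing every morphism is $\sum_C \lambda_C \phi_C$ with $C$ ranging over $\Omega(I,J)$. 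The last sentence ($\Omega(I,J) = \emptyset \iff \Hom_P(k_I,k_J) = 0$) is then a corollary of the basis statement.

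The main obstacle I anticipate is the bookkeeping in the second and third paragraphs: precisely matching the "no naturality obstruction" conditions on a component $C$ with the clean inclusions $I \cap C^{\downarrow} \subseteq C$ and $J \cap C^{\uparrow} \subseteq C$, and in particular using convexity of $I$ and $J$ together with connectivity of $C$ at exactly the right moments (convexity to know that an element comparable to two elements of a component lies "between" them appropriately; connectivity to absorb a newly-found element of $I \cap J$ into $C$). None of the individual steps is deep, but the quantifier structure — "for all comparabilities leaving $C$, the far endpoint escapes $I \cap J$" versus the one-sided upset/downset conditions — is easy to get slightly wrong, so I would state and prove a small auxiliary claim isolating this equivalence before assembling the rest.
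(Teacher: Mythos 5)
The paper does not prove Lemma~\ref{lem:well-def}; it simply cites \cite[Proposition 5.5]{blanchette2021homological}, so there is no in-paper argument to compare against. Your direct proof by analysing naturality squares is correct and is the standard one. The step you flagged as delicate --- reconciling the ``no obstructing comparability out of $C$'' conditions with the inclusions $I \cap C^{\downarrow} \subseteq C$ and $J \cap C^{\uparrow} \subseteq C$ --- is actually cleaner than you feared, and it is connectivity of $C$ rather than convexity of $I,J$ that closes the gap: your obstruction conditions read $C^{\uparrow} \cap J \subseteq I$ and $C^{\downarrow} \cap I \subseteq J$, and for, say, $q \in C^{\uparrow} \cap J$ with $q \in I$, the comparability $p \le q$ (for some $p \in C$) places $q$ in the same connected component of $I \cap J$ as $p$, so $q \in C$; hence $C^{\uparrow} \cap J \subseteq I$ is equivalent to $J \cap C^{\uparrow} \subseteq C$, and symmetrically for the other inclusion. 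Your verification that each $\phi_C$ with $C \in \Omega(I,J)$ really is a morphism also goes through once the two bad square-cases ($p \in C$, $q \notin C$ and its mirror) are routed directly through $J \cap C^{\uparrow} \subseteq C$ and $I \cap C^{\downarrow} \subseteq C$ respectively, as you eventually did; together with linear independence from disjoint supports and spanning from your first-paragraph analysis, this establishes the basis claim and the final equivalence.
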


Finally, we give the following property of intervals.
\begin{lemma}\label{lem:convex_pullback}
Let $P$ and $Q$ be posets, and $h\colon P\to Q$ be an order-preserving map. For an interval $I$ of $Q$, we have the following:  
\begin{enumerate}
    \item [{\rm (1)}] $h^{-1}(I)$ is a convex set of $P$.
    \item [{\rm (2)}] $h^*(k_I)$ is isomorphic to $\bigoplus_{C}k_{C}$, where the direct sum ranges over all connected components $C$ of $h^{-1}(I)$. In particular, it is interval-decomposable.
\end{enumerate}
\end{lemma}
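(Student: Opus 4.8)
The plan is to prove (1) by a one-line order chase and then to obtain (2) by unwinding the definition of $h^*$ applied to $k_I$ and comparing it term-by-term with the proposed direct sum. For part (1), suppose $p \leq r \leq q$ in $P$ with $p,q \in h^{-1}(I)$. Since $h$ is order-preserving, $h(p) \leq h(r) \leq h(q)$ in $Q$, and $h(p),h(q) \in I$; as $I$ is an interval and hence convex, $h(r) \in I$, that is, $r \in h^{-1}(I)$. Thus $h^{-1}(I)$ is convex. (Alternatively one could invoke Lemma~\ref{lem:convex}, but the direct argument is shorter.)

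For part (2), write $C' := h^{-1}(I)$, a convex subset of $P$ by (1), and recall the remark that any convex set is a disjoint union of its connected components, each of which is an interval of $P$; write $C' = \bigsqcup_{\lambda} C_\lambda$. Unwinding the definition of $h^*$, we have $h^*(k_I)_p = (k_I)_{h(p)}$, which equals $k$ precisely when $h(p) \in I$, i.e.\ $p \in C'$, and $0$ otherwise; and for $p \leq q$ in $P$ the map $h^*(k_I)(p,q) = k_I(h(p),h(q))$ equals $\id_k$ when both $h(p),h(q) \in I$ (the relation $h(p) \leq h(q)$ being automatic) and $0$ otherwise. I then claim $h^*(k_I) \cong \bigoplus_{\lambda} k_{C_\lambda}$: both sides have the same underlying vector spaces ($k$ on $C'$, $0$ elsewhere), and for the structure maps it suffices to note that whenever $p \leq q$ with $p,q \in C'$, the elements $p$ and $q$ are comparable and therefore lie in a common connected component $C_\lambda$. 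Hence $h^*(k_I)(p,q) = \id_k$ exactly when $p,q$ belong to a single $C_\lambda$, which is precisely the rule governing the structure maps of $\bigoplus_{\lambda} k_{C_\lambda}$; the identity maps on each copy of $k$ then assemble into the desired isomorphism, and in particular $h^*(k_I)$ is interval-decomposable.

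There is essentially no serious obstacle here; the one subtlety worth flagging is the observation that two comparable elements of $C'$ automatically lie in the same connected component, since it is exactly this fact that forces the pullback module to split as a direct sum along components rather than merely being filtered by them. (The complementary routine point, that each $C_\lambda$ is convex and hence genuinely an interval, follows because a comparable predecessor/successor of a point of $C_\lambda$ lying in $C'$ must again lie in $C_\lambda$.)
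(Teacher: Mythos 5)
Your proof is correct. The paper states this lemma without proof, so there is nothing to compare against; your argument is the natural one. Part~(1) is the direct order chase. In part~(2) the crucial observation you isolate --- that any two comparable elements of the convex set $h^{-1}(I)$ already lie in the same connected component (the trivial one-step sequence witnesses connectivity) --- is exactly what makes the pointwise identity maps assemble into an isomorphism of representations rather than merely a pointwise bijection, and your closing remark that each component is convex (hence an interval) by convexity of $h^{-1}(I)$ together with the same comparability observation is also right, matching the paper's earlier note that any convex set is a disjoint union of intervals.
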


\subsection{Distances}\label{subsec:distances}
In this subsection, we define and compare interleaving, bottleneck interleaving, and bottleneck distances.

\subsubsection{Translations}
We refer the readers to \cite{desilva2018theoryinterleavingscategoriesflow} for a general theory of interleaving.
Let $P$ be a poset.
 A \emph{translation} on $P$ is an order-preserving map $h\colon P \to P$ such that  $p\leq h(p)$ for any $p \in P$.  
We denote by $\text{Trans}(P)$ the set of all translations on $P$. For the identity map $\id_P$ and the composition $\circ$ of maps, the triplet $(\text{Trans} (P), \id_P, \circ )$ is naturally equipped with a monoid structure. Each homomorphism of monoid 
\[
\Lambda \colon  (\mR_{\geq 0},0, +) \to (\text{Trans} (P), \id_P, \circ )
\]
defines a monoid action of $\mR_{\geq0}$ on $P$ provided by $\mR_{\geq0}\times P \to P; (r,p)\mapsto \Lambda_\epsilon(p)$ for all $\epsilon \in \mR_{\geq0}$ and $p\in P$.
We call such $\Lambda$ an \emph{$\mR_{\geq0}$-action} on $P$.
Notice that giving an $\mR_{\geq0}$-action $\Lambda$ on $P$ is equivalent to giving a family of translations $\{\Lambda_\epsilon\}_{\epsilon\geq0}$ of $P$ satisfying the following properties:
\begin{itemize}\label{eq:conditionLambda}
    \item  For all $\epsilon \geq 0$ and $p \in P$, we have $p \leq \Lambda_\epsilon(p)$.
    \item  $\Lambda_0 = \id_P$.
    \item For all $\epsilon, \zeta \geq 0$, we have  $\Lambda_\epsilon \circ \Lambda_\zeta = \Lambda_{\epsilon + \zeta}$.
\end{itemize}
 In addition, we call such $\Lambda$ an \emph{$\mR$-action} on $P$ if $\Lambda_\epsilon$ is an automorphism of posets for each $\epsilon \geq0$. In this case, we write $\Lambda_{-\epsilon}$ for the inverse map $\Lambda_\epsilon^{-1}$. 
For example, we have an $\mR$-action on the poset $P=\mR$ (with respect to the usual order) given by 
\begin{equation}\label{eq:action_on_R}
\Lambda_\epsilon(r):=r+\epsilon \text{ ($r \in \mR$).}    
\end{equation}

Let $\Lambda$ be an $\mR_{\geq0}$-action on $P$.
For each $\epsilon \geq 0$, we call the induced functor   
 $\Lambda_\epsilon^* \colon \Rep(P) \to \Rep(P)$ a \emph{$\Lambda_\epsilon$-shift functor}. 
 For a $P$-persistence module $V$, we write $V(\epsilon)$ for $\Lambda_\epsilon^*(V)$. In addition, for a morphism $f\colon V \to W$ of $P$-persistence modules, we write $f(\epsilon)$ for $\Lambda_\epsilon^*(f)$.
 Since $\Lambda_\epsilon$ is a translation, we have a morphism $V \to V(\epsilon)$ provided by $ V(p,\Lambda_\epsilon(p))\colon V_p \to V(\epsilon)_p$ for each $p\in P$. We denote this morphism by $V_{0\to \epsilon}$.

With respect to an $\mR_{\geq0}$-action on $P$, we will define the following distances.

\begin{definition}[$\Lambda_\epsilon$-interleaving and the interleaving distance]\label{def:d_I} 
Let $V$ and $W$ be pfd $P$-persistence modules. For $\epsilon\geq0$, a \emph{$\Lambda_\epsilon$-interleaving} between $V$ and $W$ is a pair of morphisms $\alpha \colon V\to W(\epsilon)$ and $\beta \colon W\to V(\epsilon)$ such that $\beta (\epsilon)\circ \alpha  =  V_{0\to 2\epsilon}$ and $ \alpha(\epsilon)\circ \beta = W_{0 \to2\epsilon}$.
If there exists a $\Lambda_\epsilon$-interleaving between $V$ and $W$, we say that $V$ and $W$ are \emph{$\Lambda_\epsilon$-interleaved}. The \emph{interleaving distance} between $V$ and $W$ is defined by  \begin{equation*}
     d_{\rm{I}}^{\Lambda} (V, W):= \inf \{\epsilon \in \mR_{\geq0} \ | \  \text{$V$ and $W$ are $\Lambda_\epsilon$-interleaved}\}.
 \end{equation*}   
\end{definition}

Next, we will define a bottleneck interleaving distance between pfd $P$-persistence modules.
A \emph{partial matching} between  multisets $\mathscr{A}$ and $\mathscr{B}$ is a bijection $\sigma \colon \mathscr{A}'\to \mathscr{B}'$ for some subsets $\mathscr{A}' \subseteq \mathscr{A}$ and $\mathscr{B}' \subseteq \mathscr{B}$, and  we write it by $\sigma \colon \mathscr{A} \nrightarrow  \mathscr{B}$.
 In this case, $\coim \sigma$ and $\im \sigma$ denote $\mathscr{A}'$ and $\mathscr{B}'$, respectively.

We say that a pfd $P$-persistence module $V$ is \emph{$\Lambda_{\epsilon}$-trivial} if $V_{0\to \epsilon} = 0$. Otherwise, we say that $V$ is \emph{$\Lambda_{\epsilon}$-significant}.  Notice that $V$ is $\Lambda_{2\epsilon}$-trivial if and only if $V$ and $0$ are $\Lambda_\epsilon$-interleaved. 

\begin{definition}[bottleneck $\Lambda_\epsilon$-interleaving and the bottleneck interleaving distance]\label{def:bottle2}
Let $V$ and $W$ be pfd $P$-persistence modules. For  $\epsilon\geq0$, a \emph{bottleneck $\Lambda_\epsilon$-interleaving} between $V$ and $W$ is a partial matching $\sigma \colon \mathscr{B}(V) \nrightarrow \mathscr{B}(W)$ satisfying the following properties$\colon$
    \begin{enumerate}
        \item For all $M \in \mathscr{B}(V) \setminus \coim \sigma$, the pfd $P$-persistence module $M$ is $\Lambda_{2\epsilon}$-trivial. 
        \item For all $N \in \mathscr{B}(W) \setminus \im \sigma$, the pfd $P$-persistence module $N$ is $\Lambda_{2\epsilon}$-trivial. 
        \item For all $M \in \coim \sigma$, the pfd $P$-persistence modules $M$ and $\sigma(M)$ are $\Lambda_\epsilon$-interleaved.
    \end{enumerate}
  We say that $V$ and $W$ are \emph{bottleneck $\Lambda_\epsilon$-interleaved} if there exists a bottleneck $\Lambda_\epsilon$-interleaving between $V$ and $W$. Note that this property does not depend on a choice of $\mathscr{B}(V)$ and $\mathscr{B}(W)$. The \emph{bottleneck interleaving distance} between $V$ and $W$
 is defined by 
 \begin{equation*}
     d_{\rm{BI}}^\Lambda (V, W):= \inf \{\epsilon \in\mR_{\geq0} \ | \  \text{$V$ and $W$ are bottleneck $\Lambda_\epsilon$-matched}\}. 
 \end{equation*}    
\end{definition}

 We have the following observation by definitions of bottleneck $\Lambda_\epsilon$-interleaving and $\Lambda_\epsilon$-interleaving.

\begin{proposition}[{\cite[p.12]{oudot2024differential}}]\label{prop:dI<dB}
    Let $V$ and $W$ be pfd $P$-persistence modules and $\epsilon\geq0$. If there exists a bottleneck $\Lambda_\epsilon$-interleaving between $V$ and $W$, then there exist a $\Lambda_\epsilon$-interleaving between $V$ and $W$. Therefore, we have the following inequality:
    \begin{equation*}
    d_{\rm{I}}^\Lambda(V,W) \leq d_{\rm{BI}}^\Lambda(V,W).
    \end{equation*}
\end{proposition}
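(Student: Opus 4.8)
The plan is to show that any bottleneck $\Lambda_\epsilon$-interleaving $\sigma\colon \mathscr{B}(V)\nrightarrow \mathscr{B}(W)$ can be assembled, direct summand by direct summand, into an honest $\Lambda_\epsilon$-interleaving $(\alpha,\beta)$ between $V$ and $W$. The inequality $d_{\mathrm{I}}^\Lambda(V,W)\le d_{\mathrm{BI}}^\Lambda(V,W)$ then follows immediately by taking the infimum over all $\epsilon$ for which $V$ and $W$ are bottleneck $\Lambda_\epsilon$-interleaved.

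First I would fix decompositions $V\cong\bigoplus_{M\in\mathscr{B}(V)}M$ and $W\cong\bigoplus_{N\in\mathscr{B}(W)}N$ into indecomposable pfd summands (available since $V,W$ are pfd, by the structure theorem), together with the associated inclusion and projection morphisms. Then I would build $\alpha\colon V\to W(\epsilon)$ blockwise. For a summand $M\in\coim\sigma$, condition (3) of Definition~\ref{def:bottle2} gives a $\Lambda_\epsilon$-interleaving between $M$ and $\sigma(M)$, in particular a morphism $\alpha_M\colon M\to \sigma(M)(\epsilon)$ together with its partner $\beta_M\colon\sigma(M)\to M(\epsilon)$ satisfying $\beta_M(\epsilon)\circ\alpha_M=M_{0\to2\epsilon}$ and $\alpha_M(\epsilon)\circ\beta_M=\sigma(M)_{0\to2\epsilon}$. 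For a summand $M\in\mathscr{B}(V)\setminus\coim\sigma$, condition (1) says $M$ is $\Lambda_{2\epsilon}$-trivial, i.e. $M_{0\to2\epsilon}=0$; I would then set the corresponding block of $\alpha$ to be the zero map $M\to 0$ and the corresponding block of $\beta$ to be the zero map (using that the matched partner "does not exist", so this summand contributes nothing to $W$). Symmetrically, for $N\in\mathscr{B}(W)\setminus\im\sigma$, condition (2) gives $N_{0\to2\epsilon}=0$ and I would let the relevant block be zero. Defining $\alpha:=\bigoplus\alpha_M$ (interpreting unmatched summands as mapping to $0$) and $\beta:=\bigoplus\beta_M$ in the evident way, both are morphisms of $P$-persistence modules since $\Lambda_\epsilon^*$ is additive and shift functors commute with direct sums.

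It then remains to verify the two interleaving identities $\beta(\epsilon)\circ\alpha=V_{0\to2\epsilon}$ and $\alpha(\epsilon)\circ\beta=W_{0\to2\epsilon}$. This is checked summand by summand, using that $V_{0\to2\epsilon}$ and $W_{0\to2\epsilon}$ respect the chosen decompositions (the structure map of a direct sum is the direct sum of the structure maps). On a matched summand $M$, the composite $\beta_M(\epsilon)\circ\alpha_M$ equals $M_{0\to2\epsilon}$ by (3); on an unmatched summand $M$ of $V$, the composite factors through $0$ and equals $0=M_{0\to2\epsilon}$ by (1); and crucially, no matched summand $N$ of $W$ can receive a nonzero contribution into the "$V$" side because the matching $\sigma$ is injective, so there is no cross-term to worry about. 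The symmetric argument handles $\alpha(\epsilon)\circ\beta=W_{0\to2\epsilon}$.

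The only slightly delicate point — and the one I would state carefully rather than wave at — is the bookkeeping of the off-diagonal blocks: one must confirm that with the blockwise definition there are genuinely no stray cross-terms, i.e. that $\beta(\epsilon)\circ\alpha$ is block-diagonal with respect to the decomposition of $V$. This is exactly where the injectivity of the partial matching $\sigma$ is used: distinct summands of $V$ are sent (if at all) to distinct summands of $W$, so after composing back down there is no mixing. Everything else is formal, relying only on additivity of $\Lambda_\epsilon^*$ and on the fact that $(-)_{0\to2\epsilon}$ is a natural transformation $\id\Rightarrow(-)(2\epsilon)$ and hence compatible with finite direct sum decompositions. Since pfd modules can be infinite direct sums of indecomposables, I would also note that direct sums (possibly infinite) are still preserved by $\Lambda_\epsilon^*$ because it is defined pointwise, so no finiteness of $\mathscr{B}(V)$ or $\mathscr{B}(W)$ is needed.
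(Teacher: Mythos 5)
Your proposal is correct, and it is essentially the standard argument one would expect here; the paper itself does not reproduce a proof for this statement but merely cites \cite[p.12]{oudot2024differential}, so there is no "paper proof" to diverge from. The blockwise assembly is exactly right: on a matched summand $M\in\coim\sigma$ you use the given interleaving pair $(\alpha_M,\beta_M)$, on unmatched summands you use the zero map together with the $\Lambda_{2\epsilon}$-triviality from conditions (1)--(2), and the injectivity of the partial matching $\sigma$ guarantees that the composites $\beta(\epsilon)\circ\alpha$ and $\alpha(\epsilon)\circ\beta$ stay block-diagonal with respect to the chosen decompositions, so they agree with $V_{0\to2\epsilon}=\bigoplus_M M_{0\to2\epsilon}$ and $W_{0\to2\epsilon}=\bigoplus_N N_{0\to2\epsilon}$ respectively. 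Your remark that $\Lambda_\epsilon^*$ is defined pointwise (so it commutes with arbitrary, possibly infinite, direct sums) is exactly the detail needed to handle the case where $\mathscr{B}(V)$ or $\mathscr{B}(W)$ is infinite, and the inequality $d_{\mathrm{I}}^\Lambda\le d_{\mathrm{BI}}^\Lambda$ then follows by taking infima. No gaps.
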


Next, we define a bottleneck distance between multisets of intervals of $P$. 
For $\epsilon\geq 0$ and $A\subseteq P$, we define the \emph{$\Lambda_\epsilon$-thickening} of $A$ by
\begin{equation*}
    \Ex^\Lambda_{\epsilon}(A) :=  \Lambda_\epsilon^{-1}(A)^\uparrow \cap \Lambda_\epsilon(A)^\downarrow.
\end{equation*}


\begin{proposition}
Suppose that $\Lambda$ is an $\mR$-action on $P$.  
For an interval $I$ of $P$, we have $I \subseteq \Ex_\epsilon^\Lambda(I)$ and $\Ex_\epsilon^\Lambda(I)\in \mathbb{I}(P)$ for all $\epsilon\geq 0$.
\end{proposition}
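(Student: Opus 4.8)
The plan is to establish the two assertions separately, each by reducing to already-available structural facts about convex sets and about $\mathbb{R}$-actions. Throughout, fix $\epsilon \geq 0$ and abbreviate $\Lambda_\epsilon$ by $g$ and $\Lambda_{-\epsilon} = \Lambda_\epsilon^{-1}$ by $g^{-1}$; since $\Lambda$ is an $\mathbb{R}$-action, both $g$ and $g^{-1}$ are automorphisms of the poset $P$, and $p \leq g(p)$, equivalently $g^{-1}(p) \leq p$, for all $p \in P$.

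\textbf{Step 1: $I \subseteq \Ex_\epsilon^\Lambda(I)$.} Let $p \in I$. Since $g^{-1}(p) \leq p$ and $g^{-1}(p) \in g^{-1}(I)$, we get $p \in g^{-1}(I)^\uparrow$. Symmetrically, since $p \leq g(p)$ and $g(p) \in g(I)$, we get $p \in g(I)^\downarrow$. Hence $p$ lies in the intersection $g^{-1}(I)^\uparrow \cap g(I)^\downarrow = \Ex_\epsilon^\Lambda(I)$.

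\textbf{Step 2: $\Ex_\epsilon^\Lambda(I)$ is convex.} By Lemma~\ref{lem:convex} it suffices to show that $\Ex_\epsilon^\Lambda(I)$ equals its own up-down hull. The set $\Ex_\epsilon^\Lambda(I) = g^{-1}(I)^\uparrow \cap g(I)^\downarrow$ is an intersection of an upset and a downset; any upset $U$ satisfies $U^\uparrow = U$ and any downset $D$ satisfies $D^\downarrow = D$. One then checks the general fact that for an upset $U$ and a downset $D$, the set $U \cap D$ is convex: if $p \leq r \leq q$ with $p, q \in U \cap D$, then $p \in U$ and $p \leq r$ force $r \in U$, while $q \in D$ and $r \leq q$ force $r \in D$, so $r \in U \cap D$. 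Applying this with $U = g^{-1}(I)^\uparrow$ and $D = g(I)^\downarrow$ shows $\Ex_\epsilon^\Lambda(I)$ is convex.

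\textbf{Step 3: connectivity, hence $\Ex_\epsilon^\Lambda(I) \in \mathbb{I}(P)$.} Here is where the real work lies: a convex set need only be a disjoint union of intervals, so we must show $\Ex_\epsilon^\Lambda(I)$ is connected. Take $p, q \in \Ex_\epsilon^\Lambda(I)$. From $p \in g^{-1}(I)^\uparrow$ pick $a \in g^{-1}(I)$ with $a \leq p$, so $g(a) \in I$; from $p \in g(I)^\downarrow$ pick $b \in g(I)$ with $p \leq b$, so $g^{-1}(b) \in I$. Since $g$ and $g^{-1}$ are order-preserving and $a \leq p \leq b$, we get $g^{-1}(b) \leq g^{-1}(p) \leq p$ — wait, more usefully: applying $g$ to $g^{-1}(b) \le p$ is not directly available, so instead note $g(a) \in I$, $g^{-1}(b) \in I$, and I claim $g(a)$ and $g^{-1}(b)$ are connected inside $I$ by the zigzag $g(a) \geq a$? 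No — rather, observe $g^{-1}(b) \le b$ and $a \le p \le b$ and $a \le g(a)$, so we need a comparability chain within $I$. The clean route: show that $p$ itself can be ``retracted into $I$'' by comparing $p$ with an element of $I$. Indeed we will prove that for every $p \in \Ex_\epsilon^\Lambda(I)$ there is $c_p \in I$ with $c_p$ comparable to $p$; concretely, with $a, b$ as above, $g(a) \le g(p)$ and $p \le b$ gives $g^{-1}(p) \le g^{-1}(b)$, and combined with $a \le g^{-1}(p)$ (from $a \le p$, no—$a \le p$ gives $g(a) \le g(p)$; from $p \le b$, $p \le b$, want an element of $I$ near $p$). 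The element to use is $g(a) \wedge$-type comparison: since $a \le p$, we have $g(a) \le g(p)$, and since $g^{-1}(b) \le p$ fails in general, I instead use that $a \le p$ and $p \le b$ yield $g(a) \le g(b)$; but $g(a) \in I$ and... The honest statement: with $a \le p \le b$, $g(a) \in I$, $g^{-1}(b) \in I$, and $g^{-1}(b) \le b$, $a \le g(a)$, and $a \le p \le b$ — the pair $g(a), g^{-1}(b)$ lies in $I$ and both are comparable to the segment; since $I$ is connected, $g(a)$ and $g^{-1}(b)$ are joined by a zigzag in $I \subseteq \Ex_\epsilon^\Lambda(I)$, and we splice $p$ in via $g^{-1}(b) \le p$? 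This requires $g^{-1}(b) \le p$: from $p \le b$ apply $g^{-1}$ to get $g^{-1}(p) \le g^{-1}(b)$, and $g^{-1}(p) \le p$, so the common lower bound is $g^{-1}(p)$, not helpful for comparability with $p$ directly. The workable comparability: $g(a) \le g(p)$ and $g(p) \ge p$... I will therefore prove connectivity by the slicker argument that $\Ex_\epsilon^\Lambda(I) = \{p \in P : g^{-1}(p) \in I^\downarrow \text{ and } g(p) \in I^\uparrow, \text{ and } [g^{-1}(p),g(p)] \cap I \ne \emptyset\}$-flavoured description, and reduce to the known connectivity result for such ``$\epsilon$-interleaving thickenings'' by citing or adapting \cite[Proposition 5.5]{blanchette2021homological}-style reasoning. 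Concretely, I expect the cleanest proof picks, for each $p \in \Ex_\epsilon^\Lambda(I)$, an explicit element $m(p) \in I$ with $g^{-1}(m(p)) \le p \le g(m(p))$ (such $m(p)$ exists: take $m(p) = g(a)$ where $a \le p$, $g(a) \in I$, then $g^{-1}(m(p)) = a \le p$, and one shows $p \le g(m(p)) = g^2(a)$ using $p \le b = g(g^{-1}(b))$ with $g^{-1}(b) \in I$ and a comparison of $g(a)$ with $g^{-1}(b)$ inside the interval $I$). Then any two points $p, q$ connect via: $p$ comparable to $m(p) \in I$, $m(p)$ connected in $I$ to $m(q)$, $m(q)$ comparable to $q$.

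\textbf{Main obstacle.} The only nontrivial point is connectivity (Step 3): convexity and the containment $I \subseteq \Ex_\epsilon^\Lambda(I)$ are immediate from the definitions and Lemma~\ref{lem:convex}. The subtlety in connectivity is that a priori $g^{-1}(I)^\uparrow \cap g(I)^\downarrow$ could split, so one must genuinely use both that $I$ is an interval (connected) and that $\Lambda_\epsilon$ is a poset \emph{automorphism} — this is exactly why the hypothesis ``$\Lambda$ is an $\mathbb{R}$-action'' (not merely an $\mathbb{R}_{\geq 0}$-action) appears in the statement. I expect the final write-up to package Step 3 as: exhibit for each $p$ an $m(p) \in I$ comparable to $p$, then invoke connectivity of $I$.
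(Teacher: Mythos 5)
Your Steps~1 and~2 are correct and cleanly argued: the containment $I \subseteq \Ex_\epsilon^\Lambda(I)$ is immediate, and the intersection of an upset with a downset is convex by the argument you give (this is essentially Lemma~\ref{lem:convex}). You also correctly identify connectivity as the only nontrivial point. The paper itself sidesteps this entirely: it rewrites $\Ex^\Lambda_{\epsilon}(I) = \Lambda_{-\epsilon}(I^\uparrow) \cap \Lambda_{\epsilon}(I)^\downarrow$ using the fact that $\Lambda_\epsilon$ is an automorphism (so $\Lambda_\epsilon^{-1}(I)^\uparrow = \Lambda_\epsilon^{-1}(I^\uparrow)$) and then cites \cite[p.57 and Proposition A.4]{clause2024gen} for the assertion that this intersection is an interval containing $I$. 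So you are attempting a genuinely different, self-contained route, which is laudable; but you do not land it.

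The gap is in Step~3. Your outlined plan is to find, for each $p \in \Ex_\epsilon^\Lambda(I)$, an element $m(p) \in I$ with $\Lambda_{-\epsilon}(m(p)) \leq p \leq \Lambda_{\epsilon}(m(p))$, and you propose $m(p) = \Lambda_\epsilon(a)$ for any $a \in \Lambda_\epsilon^{-1}(I)$ with $a \leq p$. The upper bound $p \leq \Lambda_{2\epsilon}(a)$ is simply not established and can fail for a bad choice of $a$: in $P = \mathbb{R}$ with $\Lambda_\epsilon(r) = r + \epsilon$, $I = [0,1]$, $\epsilon = 1$, $p = 2$, the choice $a = -1$ gives $\Lambda_{2}(a) = 1 < 2 = p$. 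Moreover, even granting a well-chosen $m(p)$, the sandwich $\Lambda_{-\epsilon}(m(p)) \leq p \leq \Lambda_{\epsilon}(m(p))$ does not make $p$ comparable to $m(p)$, which is what your final splicing step uses; and you never verify that the intermediate vertices of your zigzag lie inside $\Ex_\epsilon^\Lambda(I)$, which is required.

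The fix is easier than what you were reaching for. With $a \in \Lambda_\epsilon^{-1}(I)$, $a \leq p$, and $b \in \Lambda_\epsilon(I)$, $p \leq b$, use the length-two zigzag
\[
p \;\geq\; a \;\leq\; \Lambda_\epsilon(a),
\]
noting that $\Lambda_\epsilon(a) \in I \subseteq \Ex_\epsilon^\Lambda(I)$ by your Step~1, and that $a \in \Ex_\epsilon^\Lambda(I)$ because $a \in \Lambda_\epsilon^{-1}(I) \subseteq \Lambda_\epsilon^{-1}(I)^\uparrow$ (witness $a$ itself) and $a \leq p \leq b \in \Lambda_\epsilon(I)$ gives $a \in \Lambda_\epsilon(I)^\downarrow$. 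This connects every $p \in \Ex_\epsilon^\Lambda(I)$ to a point of $I$ within $\Ex_\epsilon^\Lambda(I)$; since $I$ is connected and $I \subseteq \Ex_\epsilon^\Lambda(I)$, connectivity of $\Ex_\epsilon^\Lambda(I)$ follows. With that replacement, your proof is complete and self-contained, which is a mild improvement over the paper's citation-based argument.
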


\begin{proof}
    Let $I$ be an interval of $P$. Then, $\Ex^\Lambda_{\epsilon}(I) =  \Lambda_{-\epsilon}(I^\uparrow) \cap \Lambda_{\epsilon}(I)^\downarrow$ holds by using assumption on $\Lambda$. It is shown in \cite[p.57 and Proposition A.4]{clause2024gen} that $\Lambda_{-\epsilon}(I^\uparrow) \cap \Lambda_{\epsilon}(I)^\downarrow$ is an interval including $I$. 
\end{proof}





The bottleneck distance is defined by using $\Lambda_\epsilon$-thickening of intervals.
\begin{definition}[$\Lambda_\epsilon$-matching and the bottleneck distance]\label{def:matching} 
Let $\mathscr{A}$ and $\mathscr{B}$ be multisets of intervals of $P$. For $\epsilon\geq0$, a \emph{$\Lambda_\epsilon$-matching} between $\mathscr{A}$ and $\mathscr{B}$ is a partial matching $\sigma \colon \mathscr{A} \nrightarrow \mathscr{B}$ satisfying the following properties.
\begin{enumerate}
    \item For all $I \in \mathscr{A} \setminus \coim \sigma$, the interval module $k_I$ is $\Lambda_{2\epsilon}$-trivial.
    \item For all $J \in \mathscr{B} \setminus \im \sigma$, the interval module $k_J$ is $\Lambda_{2\epsilon}$-trivial.
    \item If $\sigma(I) = J$ then $I \subseteq \Ex^\Lambda_{\epsilon}(J)$ and $J \subseteq \Ex^\Lambda_{\epsilon}(I)$.
\end{enumerate}
We say that $\mathscr{A}$ and $\mathscr{B}$ are \emph{$\Lambda_\epsilon$-matched} if there exists a $\Lambda_\epsilon$-matching between $\mathscr{A}$ and $\mathscr{B}$.  The \emph{bottleneck distance} between  $\mathscr{A}$ and $\mathscr{B}$ is defined by
    \begin{equation*}
d_{\rm{B}}^\Lambda(\mathscr{A},\mathscr{B}):=  \inf \{ \epsilon \in \mR_{ \geq 0}  \ | \ \text{$\mathscr{A}$ and $\mathscr{B}$ are $\Lambda_\epsilon$-matched}\}.
    \end{equation*}
\end{definition}

\subsubsection{Additional observations of the bottleneck interleaving and the bottleneck distances}
In this subsubsection, we fix a poset $P$ and an $\mR$-action $\Lambda$ on $P$. In this case, recall that $\Lambda_\epsilon$ is an automorphism of $P$ for any $\epsilon\geq0$. 
We study a relation between the bottleneck interleaving distance and the bottleneck distance with respect to the $\mR$-action $\Lambda$. 

\begin{proposition}\label{prop:Raction_condition}
    Let $P$ be a poset and $\Lambda$ be an $\mR$-action on $P$. For any interval $I$ of $P$ and $\epsilon \geq 0$,  if  $p \in I^{\uparrow}$  and  $\Lambda_\epsilon(p) \in \Lambda_\epsilon(I)^\downarrow$, then $p \in I$ holds.
\end{proposition}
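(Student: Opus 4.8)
The plan is to unwind the definition of the $\Lambda_\epsilon$-thickening and use the fact that, for an $\mR$-action, each $\Lambda_\epsilon$ is a poset automorphism, so that order relations can be transported freely by $\Lambda_\epsilon$ and $\Lambda_{-\epsilon}$. Concretely, suppose $p \in I^{\uparrow}$ and $\Lambda_\epsilon(p) \in \Lambda_\epsilon(I)^{\downarrow}$. From $p \in I^{\uparrow}$ there is some $a \in I$ with $a \leq p$. From $\Lambda_\epsilon(p) \in \Lambda_\epsilon(I)^{\downarrow}$ there is some $b \in I$ with $\Lambda_\epsilon(p) \leq \Lambda_\epsilon(b)$; since $\Lambda_\epsilon$ is an automorphism, applying $\Lambda_{-\epsilon} = \Lambda_\epsilon^{-1}$ (which is order-preserving) gives $p \leq b$. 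Hence $a \leq p \leq b$ with $a, b \in I$, and convexity of $I$ forces $p \in I$.

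The only subtlety is the bookkeeping about which element witnesses membership in an upset versus a downset, and making sure that $\Lambda_\epsilon$ being an automorphism is genuinely used (it is: without it one cannot pull $\Lambda_\epsilon(p) \leq \Lambda_\epsilon(b)$ back to $p \leq b$, since a general translation need not reflect order). So I would state explicitly at the start that, because $\Lambda$ is an $\mR$-action, $\Lambda_{-\epsilon}$ exists and is order-preserving, and then simply chain the three facts $a \leq p$, $p \leq b$, $a,b \in I$ through the convexity of $I$. I do not expect any real obstacle here; the statement is essentially a reformulation of ``$\Lambda_\epsilon$ preserves and reflects $\leq$, and intervals are convex,'' and the proof is three lines. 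The main thing to be careful about is not to conflate $\Lambda_\epsilon(I)^{\downarrow}$ with $\Lambda_\epsilon(I^{\downarrow})$ — although in fact for an automorphism these coincide, so even that conflation would be harmless, but it is cleaner to argue directly via the witness $b$.
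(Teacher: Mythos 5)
Your proof is correct and matches the paper's argument step for step: take witnesses $a \in I$ with $a \leq p$ and $b \in I$ with $\Lambda_\epsilon(p) \leq \Lambda_\epsilon(b)$, apply $\Lambda_{-\epsilon}$ to get $p \leq b$, and conclude by convexity of $I$. There is no meaningful difference in approach.
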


\begin{proof}
    Let $I$ be an interval of $P$. We assume $p \in I^\uparrow$ and $\Lambda_\epsilon (p) \in \Lambda_\epsilon(I)^\downarrow$.
By $p \in I^\uparrow$, there exists $a \in I$ such that $a\leq p$. On the other hand, by $\Lambda_\epsilon (p)\in \Lambda_\epsilon(I)^\downarrow$, there exists $b \in I$ such that $\Lambda_\epsilon (p) \leq \Lambda_\epsilon(b)$. Since $\Lambda_\epsilon$ is an automorphism, we have $p\leq b$. Since $I$ is convex and $a\leq p\leq b$ with $a,b\in I$, we have $p\in I$. 
\end{proof}

\begin{lemma}\label{lem:support_interval}
Let $I$ be an interval of $P$. Then, $\Lambda_{-\epsilon}(I)$ is an interval of $P$ for any $\epsilon\geq0$. Therefore, $k_I(\epsilon) = k_{\Lambda_{-\epsilon}(I)}$ is an interval module. 
\end{lemma}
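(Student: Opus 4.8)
The plan is to exploit that, since $\Lambda$ is an $\mR$-action on $P$, each $\Lambda_\epsilon$ is an automorphism of $P$ with inverse $\Lambda_{-\epsilon}=\Lambda_\epsilon^{-1}$, which is therefore also an order-preserving bijection. I would first check the purely order-theoretic fact that an order-isomorphism of $P$ carries intervals to intervals. Concretely, let $I$ be an interval and put $J:=\Lambda_{-\epsilon}(I)$. For convexity: if $p,q\in J$ and $p\le r\le q$ in $P$, apply the order-preserving map $\Lambda_\epsilon$ to get $\Lambda_\epsilon(p)\le\Lambda_\epsilon(r)\le\Lambda_\epsilon(q)$ with $\Lambda_\epsilon(p),\Lambda_\epsilon(q)\in I$; convexity of $I$ yields $\Lambda_\epsilon(r)\in I$, hence $r\in J$. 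For connectivity: given $p,q\in J$, the elements $\Lambda_\epsilon(p),\Lambda_\epsilon(q)$ lie in $I$, so there is a sequence $\Lambda_\epsilon(p)=z_0,\dots,z_\ell=\Lambda_\epsilon(q)$ in $I$ with consecutive terms comparable; since $\Lambda_{-\epsilon}$ is an order-isomorphism, $\Lambda_{-\epsilon}(z_0),\dots,\Lambda_{-\epsilon}(z_\ell)$ is a sequence in $J$ joining $p$ to $q$ with consecutive terms comparable (comparability is preserved in both directions because both $\Lambda_\epsilon$ and $\Lambda_{-\epsilon}$ are order-preserving). Thus $J=\Lambda_{-\epsilon}(I)\in\mathbb{I}(P)$.

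For the second assertion, recall that $k_I(\epsilon)=\Lambda_\epsilon^*(k_I)$. I would either compute this pointwise---$(\Lambda_\epsilon^*k_I)_p=(k_I)_{\Lambda_\epsilon(p)}$ equals $k$ exactly when $\Lambda_\epsilon(p)\in I$, i.e. when $p\in\Lambda_\epsilon^{-1}(I)=\Lambda_{-\epsilon}(I)$, and likewise the structure maps are $\id_k$ precisely on comparable pairs inside $\Lambda_{-\epsilon}(I)$, so $\Lambda_\epsilon^*(k_I)=k_{\Lambda_{-\epsilon}(I)}$---or, more cleanly, invoke Lemma~\ref{lem:convex_pullback}(2) with the order-preserving map $h=\Lambda_\epsilon\colon P\to P$: it gives $h^*(k_I)\cong\bigoplus_C k_C$ over the connected components $C$ of $h^{-1}(I)=\Lambda_{-\epsilon}(I)$, and since we have just shown $\Lambda_{-\epsilon}(I)$ is an interval (in particular connected) there is a single summand, namely $k_{\Lambda_{-\epsilon}(I)}$. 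Being of the form $k_J$ for the interval $J=\Lambda_{-\epsilon}(I)$, it is an interval module.

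I do not expect a genuine obstacle here; the only point requiring care is the connectivity check, where one must use that $\Lambda_\epsilon$ is an \emph{automorphism} (not merely a translation) so that comparability is transported faithfully in both directions, and correspondingly that $\Lambda_\epsilon^{-1}(I)$ coincides with the thickening-free image $\Lambda_{-\epsilon}(I)$.
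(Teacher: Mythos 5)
Your proof is correct and takes the same approach as the paper: the paper simply records that poset automorphisms preserve both convexity and connectivity, and then reads off that $\Lambda_\epsilon^*(k_I)$ is supported exactly on $\Lambda_\epsilon^{-1}(I)=\Lambda_{-\epsilon}(I)$. You have just spelled out the same argument in full detail, and the details are all right.
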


\begin{proof}
The assertion follows from a fact that automorphisms of posets preserve both convexity and connectivity of subsets.
\end{proof}

\begin{lemma}\label{lem:meet-condition} 
    Let $I$ and $J$ are intervals of $P$ such that $I \subseteq \Ex^\Lambda_{\epsilon}(J)$ and $J \subseteq \Ex^\Lambda_{\epsilon}(I)$ for an $\epsilon\geq0$. Then, the set $\Omega(I,\Lambda_{-\epsilon}(J))$ (resp. $ \Omega(J,\Lambda_{-\epsilon}(I))$) coincides with the set of connected components of $I \cap \Lambda_{-\epsilon}(J)$ (resp. $J \cap \Lambda_{-\epsilon}(I)$). 
\end{lemma}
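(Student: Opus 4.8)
The plan is to check, for an arbitrary connected component $C$ of $I\cap\Lambda_{-\epsilon}(J)$, the two defining conditions \eqref{eq:condition} that place $C$ in $\Omega(I,\Lambda_{-\epsilon}(J))$. Since by definition $\Omega(I,\Lambda_{-\epsilon}(J))$ is already contained in the set of connected components of $I\cap\Lambda_{-\epsilon}(J)$, verifying that every such component lies in $\Omega(I,\Lambda_{-\epsilon}(J))$ gives the asserted equality. The two hypotheses $I\subseteq\Ex^\Lambda_\epsilon(J)$ and $J\subseteq\Ex^\Lambda_\epsilon(I)$ are symmetric in $I$ and $J$, so once the statement for $\Omega(I,\Lambda_{-\epsilon}(J))$ is established, the ``resp.'' version follows by interchanging $I$ and $J$. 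Throughout, write $J':=\Lambda_{-\epsilon}(J)$, which is an interval by Lemma~\ref{lem:support_interval}, and use that, $\Lambda$ being an $\mR$-action, the maps $\Lambda_{\pm\epsilon}$ are order-automorphisms; in particular $\Lambda_{-\epsilon}(A^{\uparrow})=\Lambda_{-\epsilon}(A)^{\uparrow}$, $\Lambda_{-\epsilon}(A^{\downarrow})=\Lambda_{-\epsilon}(A)^{\downarrow}$, $\Lambda_{-\epsilon}\circ\Lambda_{\epsilon}=\id_P$, and hence $\Ex^\Lambda_\epsilon(A)=\Lambda_{-\epsilon}(A^{\uparrow})\cap\Lambda_\epsilon(A)^{\downarrow}$ for every $A\subseteq P$.

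Fix a connected component $C$ of $I\cap J'$ (an interval, as $I\cap J'$ is convex). For the condition $I\cap C^{\downarrow}\subseteq C$: given $p\in I\cap C^{\downarrow}$, pick $c\in C$ with $p\leq c$; I claim $p\in J'$. Indeed $p\in I\subseteq\Ex^\Lambda_\epsilon(J)\subseteq\Lambda_{-\epsilon}(J^{\uparrow})=(J')^{\uparrow}$, while $c\in C\subseteq J'$ together with $p\leq c$ gives $p\in(J')^{\downarrow}$; as $J'$ is convex, Lemma~\ref{lem:convex} yields $p\in(J')^{\uparrow}\cap(J')^{\downarrow}=J'$. Then $p\in I\cap J'$ is comparable to $c\in C$, so $p$ lies in the same connected component of $I\cap J'$ as $c$, namely $C$. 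For the condition $J'\cap C^{\uparrow}\subseteq C$: given $q\in J'\cap C^{\uparrow}$, pick $c\in C$ with $c\leq q$; since $c\in C\subseteq I$ we get $q\in I^{\uparrow}$, and since $q\in J'=\Lambda_{-\epsilon}(J)$ we get $\Lambda_\epsilon(q)\in J\subseteq\Ex^\Lambda_\epsilon(I)\subseteq\Lambda_\epsilon(I)^{\downarrow}$. By Proposition~\ref{prop:Raction_condition} applied to the interval $I$, $q\in I$, so $q\in I\cap J'$ is comparable to $c\in C$ and therefore $q\in C$. Hence $C\in\Omega(I,J')$.

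The only point needing genuine care is the bookkeeping with $\Lambda_{\pm\epsilon}$, i.e. confirming that the ``$\Ex$'' hypotheses translate exactly into the upset/shifted-downset inputs needed by Lemma~\ref{lem:convex} (equivalently Proposition~\ref{prop:Raction_condition}); this is where the fact that $\Lambda$ is an $\mR$-action — not merely an $\mR_{\geq0}$-action — is used. Everything else (connected components are pairwise comparable-closed, intersections of convex sets are convex) is routine, so I do not anticipate a substantive obstacle. If one prefers to avoid Proposition~\ref{prop:Raction_condition}, the second condition can be argued symmetrically to the first: from $q\in J'=\Lambda_{-\epsilon}(J)\subseteq\Lambda_{-\epsilon}(\Ex^\Lambda_\epsilon(I))=\Lambda_{-2\epsilon}(I^{\uparrow})\cap I^{\downarrow}$ one obtains $q\in I^{\downarrow}$, and combining with $q\in I^{\uparrow}$ and the convexity of $I$ via Lemma~\ref{lem:convex} again gives $q\in I$.
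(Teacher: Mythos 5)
Your proof is correct and follows essentially the same route as the paper: fix a connected component $C$ of $I\cap\Lambda_{-\epsilon}(J)$, verify the two conditions in \eqref{eq:condition} by using $I\subseteq\Ex^\Lambda_\epsilon(J)$ together with Lemma~\ref{lem:convex} for $I\cap C^{\downarrow}\subseteq C$, and $J\subseteq\Ex^\Lambda_\epsilon(I)$ together with Proposition~\ref{prop:Raction_condition} for $\Lambda_{-\epsilon}(J)\cap C^{\uparrow}\subseteq C$. Your parenthetical alternative for the second inclusion is also fine and slightly more symmetric, but Proposition~\ref{prop:Raction_condition} is itself just a packaged form of the same convexity argument, so there is no substantive difference.
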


\begin{proof}Let $I$ and $J$ be intervals of $P$.
 We show that $\Omega(I \cap \Lambda_{-\epsilon}(J))$ coincides with the set of connected components of $I \cap \Lambda_{-\epsilon}(J)$.
 The assertion for $ \Omega(J,\Lambda_{-\epsilon}(I))$ can be obtained by a similar discussion.
 
 Let $C$ be a connected component of $I\cap \Lambda_{-\epsilon}(J)$. 
    We show that $C$ satisfies \eqref{eq:condition}, meaning that both  $I\cap C^{\downarrow} \subseteq C$ and $ \Lambda_{-\epsilon}(J) \cap C^{\uparrow} \subseteq C$ hold. We first show the former inclusion relation. 
    Let $A:=I\cap C^{\downarrow}$.    For any $a \in A$, there exists $c \in C$ such that $a \leq c$. In addition, by $c \in C \subseteq  I \cap \Lambda_{-\epsilon}(J)$, we have $a \in \Lambda_{-\epsilon}(J)^\downarrow $. 
    Moreover, by $a \in A \subseteq  I \subseteq \Ex^\Lambda_{\epsilon}(J) \subseteq  \Lambda_{-\epsilon}(J)^\uparrow$, we have $a \in \Lambda_{-\epsilon}(J)^\downarrow  \cap  \Lambda_{-\epsilon}(J)^\uparrow = \Lambda_{-\epsilon}(J)$, where the last equality is followed by Lemmas~\ref{lem:convex} and  \ref{lem:support_interval}. By the above discussion, we have $a \in I \cap \Lambda_{-\epsilon}(J)$. 
    Since $C$ is a connected component of $I \cap \Lambda_{-\epsilon}(J)$, and since we have $a \leq c$, the element $a$ belongs to the same component as $c$. Thus, we have $a \in C$ and obtain the desired inclusion relation $A \subseteq C$. 
    
    Next, we show $B := \Lambda_{-\epsilon}(J) \cap C^{\uparrow}  \subseteq C$. For any $b \in B$, there exists $c \in C$ such that $c \leq b$. In addition, by $c\in C \subseteq I \cap \Lambda_{-\epsilon}(J)$, we have $b \in  I^\uparrow$. 
    On the other hand, we have $\Lambda_\epsilon (b) \in J \subseteq \Ex^\Lambda_{\epsilon}(I) \subseteq {\Lambda_\epsilon(I)}^\downarrow $.  By Proposition \ref{prop:Raction_condition}, we have $b \in I$. Thus, we obtain $b \in  I \cap \Lambda_{-\epsilon}(J)$. By $c \leq b$, the element $b$ is in the same component as $c$. 
    Thus, we have $b \in B$ and obtain the desired inclusion relation $B \subseteq C$.  By the above discussion, we conclude that the set $\Omega(I,\Lambda_{-\epsilon}(J))$  coincides with the set of connected components of $I \cap \Lambda_{-\epsilon}(J)$.
 This completes the proof.
\end{proof}

\begin{lemma}\label{lem:capcap}
    Let $I$ and $J$ be intervals of $P$ such that $I \subseteq \Ex^\Lambda_{\epsilon}(J)$ and $J \subseteq \Ex^\Lambda_{\epsilon}(I)$ for an $\epsilon\geq0$. Then we have 
\begin{equation}\label{eq:two_eq}
       I \cap \Lambda_{-\epsilon}(J) \cap \Lambda_{-2\epsilon}(I) = I \cap \Lambda_{-2\epsilon}(I) \text{ and } J \cap \Lambda_{-\epsilon}(I) \cap \Lambda_{-2\epsilon}(J) = J \cap \Lambda_{-2\epsilon}(J).     
    \end{equation}
\end{lemma}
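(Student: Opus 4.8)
The plan is to prove the first equality; the second follows by the symmetric argument (swap the roles of $I$ and $J$, noting that the hypotheses are symmetric in $I$ and $J$). Since the right-hand side $I \cap \Lambda_{-2\epsilon}(I)$ is visibly contained in the left-hand side — intersecting with an extra set $\Lambda_{-\epsilon}(J)$ can only shrink $I \cap \Lambda_{-2\epsilon}(I)$, so really it is the reverse inclusion $I \cap \Lambda_{-2\epsilon}(I) \subseteq \Lambda_{-\epsilon}(J)$ restricted to $I \cap \Lambda_{-2\epsilon}(I)$ — wait, let me restate: the inclusion $\subseteq$ is trivial, and for $\supseteq$ it suffices to show that every $x \in I \cap \Lambda_{-2\epsilon}(I)$ lies in $\Lambda_{-\epsilon}(J)$.

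First I would fix $x \in I \cap \Lambda_{-2\epsilon}(I)$ and aim to show $x \in \Lambda_{-\epsilon}(J)$. By Lemmas~\ref{lem:convex} and~\ref{lem:support_interval}, $\Lambda_{-\epsilon}(J)$ is an interval, hence convex, so by Lemma~\ref{lem:convex} it is enough to show $x \in \Lambda_{-\epsilon}(J)^{\uparrow}$ and $x \in \Lambda_{-\epsilon}(J)^{\downarrow}$. For the up-set membership: from $x \in I \subseteq \Ex^\Lambda_\epsilon(J) = \Lambda_{-\epsilon}(J^\uparrow) \cap \Lambda_\epsilon(J)^\downarrow \subseteq \Lambda_{-\epsilon}(J)^\uparrow$ (using the reformulation of $\Ex^\Lambda_\epsilon$ from the proof of the preceding Proposition together with $\Lambda_{-\epsilon}(J^\uparrow) = \Lambda_{-\epsilon}(J)^\uparrow$, which holds because $\Lambda_{-\epsilon}$ is an order-automorphism), we get $x \in \Lambda_{-\epsilon}(J)^\uparrow$ directly. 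For the down-set membership: I would apply $\Lambda_{2\epsilon}$ and use $x \in \Lambda_{-2\epsilon}(I)$, i.e. $\Lambda_{2\epsilon}(x) \in I$. Since $J \subseteq \Ex^\Lambda_\epsilon(I) \subseteq \Lambda_\epsilon(I)^\downarrow$, and more relevantly $I \subseteq \Lambda_\epsilon(J)^\downarrow$ (again from $I \subseteq \Ex^\Lambda_\epsilon(J)$), we have $\Lambda_{2\epsilon}(x) \in I \subseteq \Lambda_\epsilon(J)^\downarrow$, so there is $b \in J$ with $\Lambda_{2\epsilon}(x) \leq \Lambda_\epsilon(b)$; applying the automorphism $\Lambda_{-\epsilon}$ gives $\Lambda_\epsilon(x) \leq b \in J$, hence $\Lambda_\epsilon(x) \in J^\downarrow$, i.e. $x \in \Lambda_{-\epsilon}(J^\downarrow) = \Lambda_{-\epsilon}(J)^\downarrow$. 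Combining the two memberships with convexity of $\Lambda_{-\epsilon}(J)$ yields $x \in \Lambda_{-\epsilon}(J)$, which is what we wanted.

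The main obstacle I anticipate is purely bookkeeping: carefully unwinding $\Ex^\Lambda_\epsilon$ into $\Lambda_{-\epsilon}(J)^\uparrow \cap \Lambda_\epsilon(J)^\downarrow$ and keeping straight which of the two factors of the thickening feeds the up-set condition and which feeds the down-set condition after a shift by $\Lambda_{2\epsilon}$. No deep idea is needed beyond the fact that $\Lambda_{\pm\epsilon}$ are mutually inverse order-automorphisms (so they commute with $(-)^\uparrow$ and $(-)^\downarrow$ and preserve $\leq$) together with convexity of $\Lambda_{-\epsilon}(J)$ (Lemma~\ref{lem:support_interval} and Lemma~\ref{lem:convex}). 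Once the first equality is established, the second is obtained verbatim after interchanging $I \leftrightarrow J$, since the pair of hypotheses $I \subseteq \Ex^\Lambda_\epsilon(J)$, $J \subseteq \Ex^\Lambda_\epsilon(I)$ is invariant under this interchange.
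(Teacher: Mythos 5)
Your proposal is correct and follows essentially the same route as the paper: both fix a point in $I \cap \Lambda_{-2\epsilon}(I)$, use $I \subseteq \Ex^\Lambda_\epsilon(J)$ to place it (after an $\epsilon$-shift) in the appropriate up-set and down-set of $J$, and then invoke convexity to conclude membership in $\Lambda_{-\epsilon}(J)$. The only cosmetic difference is that you unfold the convexity step via Lemmas~\ref{lem:convex} and~\ref{lem:support_interval} directly, whereas the paper packages the same step as an application of Proposition~\ref{prop:Raction_condition}; these are the same argument conjugated by $\Lambda_\epsilon$.
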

\begin{proof}
Let $I$ and $J$ be intervals of $P$.
    We first show the former equality of \eqref{eq:two_eq}.
    For any $p \in I \cap \Lambda_{-2\epsilon}(I)$, we show $p \in \Lambda_{-\epsilon}(J)$.
    By assumption, we have $p \in I \subseteq \Ex^\Lambda_{\epsilon}(J) \subseteq \Lambda_{-\epsilon}(J)^\uparrow$.  Thus, there exists $x \in \Lambda_{-\epsilon}(J)$ such that $x \leq p$. Therefore, we have $J \ni \Lambda_\epsilon(x) \leq \Lambda_\epsilon(p)$, which implies $\Lambda_\epsilon(p) \in J^{\uparrow}$. 
    On the other hand, since $p \in \Lambda_{-2\epsilon}(I)$, we have $\Lambda_\epsilon(\Lambda_\epsilon(p)) = \Lambda_{2\epsilon}(p) \in I \subseteq \Ex^\Lambda_{\epsilon}(J) \subseteq \Lambda_\epsilon(J)^{\downarrow}$.  By Proposition \ref{prop:Raction_condition}, we have $\Lambda_\epsilon(p) \in J$. Thus, we obtain $p \in \Lambda_{-\epsilon}(J)$.  By the above discussion, we obtain the former equality of \eqref{eq:two_eq}.

    By changing the role of $I$ and $J$ in the above discussion, we obtain the latter equality of \eqref{eq:two_eq}. 
\end{proof}

\begin{proposition}\label{prop:matching-induce-matching} Let $P$ be a poset with an $\mR$-action $\Lambda$. Let $I$ and $J$ be intervals of $P$ such that $I \subseteq \Ex^\Lambda_{\epsilon}(J)$ and $J \subseteq \Ex^\Lambda_{\epsilon}(I)$ for an $\epsilon\geq0$.
Then, $k_I$ and $k_J$ are $\Lambda_\epsilon$-interleaved.
\end{proposition}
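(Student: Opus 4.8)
The plan is to exhibit an explicit $\Lambda_\epsilon$-interleaving between $k_I$ and $k_J$ assembled from the canonical morphisms provided by Lemma~\ref{lem:well-def}. First I would use Lemma~\ref{lem:support_interval} to record that $k_J(\epsilon)=k_{\Lambda_{-\epsilon}(J)}$, $k_I(\epsilon)=k_{\Lambda_{-\epsilon}(I)}$, and $k_I(2\epsilon)=k_{\Lambda_{-2\epsilon}(I)}$, $k_J(2\epsilon)=k_{\Lambda_{-2\epsilon}(J)}$ are again interval modules. Consequently, the data of a $\Lambda_\epsilon$-interleaving reduces to a pair of morphisms $\alpha\colon k_I\to k_{\Lambda_{-\epsilon}(J)}$ and $\beta\colon k_J\to k_{\Lambda_{-\epsilon}(I)}$ whose shifted composites recover $(k_I)_{0\to 2\epsilon}$ and $(k_J)_{0\to 2\epsilon}$.

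Next I would construct $\alpha$ and $\beta$. By Lemma~\ref{lem:meet-condition}, every connected component $C$ of $I\cap\Lambda_{-\epsilon}(J)$ lies in $\Omega(I,\Lambda_{-\epsilon}(J))$, so Lemma~\ref{lem:well-def} yields a morphism $\phi_C\colon k_I\to k_{\Lambda_{-\epsilon}(J)}$ for each such $C$; since distinct components have disjoint supports, $\alpha:=\sum_C\phi_C$ is a well-defined morphism with $\alpha_p=\id_k$ exactly when $p\in I\cap\Lambda_{-\epsilon}(J)$ and $\alpha_p=0$ otherwise. Symmetrically, applying Lemma~\ref{lem:meet-condition} to the pair $(J,\Lambda_{-\epsilon}(I))$, I set $\beta:=\sum_D\phi_D$ over the connected components $D$ of $J\cap\Lambda_{-\epsilon}(I)$, so that $\beta_p=\id_k$ exactly when $p\in J\cap\Lambda_{-\epsilon}(I)$.

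Then I would compute the two required composites pointwise. Applying the $\Lambda_\epsilon$-shift functor, $\beta(\epsilon)\colon k_{\Lambda_{-\epsilon}(J)}\to k_{\Lambda_{-2\epsilon}(I)}$ has $\beta(\epsilon)_p=\beta_{\Lambda_\epsilon(p)}$, which is $\id_k$ precisely when $\Lambda_\epsilon(p)\in J\cap\Lambda_{-\epsilon}(I)$, i.e.\ when $p\in\Lambda_{-\epsilon}(J)\cap\Lambda_{-2\epsilon}(I)$. Hence $(\beta(\epsilon)\circ\alpha)_p=\id_k$ iff $p\in I\cap\Lambda_{-\epsilon}(J)\cap\Lambda_{-2\epsilon}(I)$, which by the first identity of Lemma~\ref{lem:capcap} equals $I\cap\Lambda_{-2\epsilon}(I)$ — exactly the set on which $(k_I)_{0\to 2\epsilon}\colon k_I\to k_{\Lambda_{-2\epsilon}(I)}$ is the identity. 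Thus $\beta(\epsilon)\circ\alpha=(k_I)_{0\to 2\epsilon}$, and running the same argument with the roles of $I$ and $J$ exchanged (using the second identity of Lemma~\ref{lem:capcap}) gives $\alpha(\epsilon)\circ\beta=(k_J)_{0\to 2\epsilon}$. Therefore $(\alpha,\beta)$ is a $\Lambda_\epsilon$-interleaving and $k_I$ and $k_J$ are $\Lambda_\epsilon$-interleaved.

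I expect the only delicate point to be the bookkeeping around the shift functor: correctly identifying $k_J(\epsilon)$ with $k_{\Lambda_{-\epsilon}(J)}$ so that $\phi_C$ has the right codomain, and checking that $\Lambda_\epsilon^*(\phi_D)$ is the canonical morphism supported on $\Lambda_{-\epsilon}(D)$. The other place where care is needed is verifying that $\sum_C\phi_C$ is genuinely a morphism — this is precisely where the $\Omega$-membership supplied by Lemma~\ref{lem:meet-condition} is essential, since the convexity-type conditions in \eqref{eq:condition} are exactly what rule out the leakage across a relation $p\leq q$ that would otherwise break naturality. Everything past that is the pointwise identity $\id_k\circ\id_k=\id_k$ together with the two set-theoretic equalities already established in Lemma~\ref{lem:capcap}.
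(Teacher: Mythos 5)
Your proposal is correct and follows essentially the same route as the paper's proof: construct $\alpha$ and $\beta$ as the canonical morphisms supported on $I\cap\Lambda_{-\epsilon}(J)$ and $J\cap\Lambda_{-\epsilon}(I)$ via Lemmas~\ref{lem:well-def} and~\ref{lem:meet-condition}, then compute the shifted composites pointwise and invoke Lemma~\ref{lem:capcap} to identify them with $(k_I)_{0\to 2\epsilon}$ and $(k_J)_{0\to 2\epsilon}$. The only cosmetic difference is that you assemble $\alpha$ as a sum $\sum_C\phi_C$ over connected components, whereas the paper writes down the pointwise formula directly; these are the same morphism.
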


\begin{proof}
For intervals $I$ and $J$ of $P$,
    by Lemmas~\ref{lem:well-def} and \ref{lem:meet-condition}, we have morphisms $\alpha \colon k_I \to k_J(\epsilon)$ and $\beta \colon k_J \to k_I(\epsilon)$ such that 
    \begin{equation*}
        \alpha_p := 
        \begin{cases}
        \id_k & \text{ if } p \in I \cap \Lambda_{-\epsilon}(J),\\
        0 & \text{ else}, 
        \end{cases}
        \text{ and }
           \beta_p := 
        \begin{cases}
        \id_k & \text{ if } p \in J \cap \Lambda_{-\epsilon}(I),\\
        0 & \text{ else}, 
        \end{cases}
    \end{equation*}
for any $p \in P$.
Then, the pair $\alpha$ and $\beta$ gives a $\Lambda_\epsilon$-interleaving between $k_I$ and $k_J$. Indeed, for any $p\in P$, we have 
    \begin{eqnarray*}
        (\beta(\epsilon)\circ \alpha)_p  &=& 
        \begin{cases}
        \id_k & \text{ if } p \in I \cap \Lambda_{-\epsilon}(J)\cap \Lambda_{-2\epsilon}(I), \\
        0 & \text{ else}, 
        \end{cases} \\
         &\overset{ \text{Lemma \ref{lem:capcap}}}{=}& 
        \begin{cases}
        \id_k & \text{ if } p \in I \cap \Lambda_{-2\epsilon}(I) ,\\
        0 & \text{ else}, 
        \end{cases} \\
        &=& ((k_I)_{0\to2\epsilon})_p.
    \end{eqnarray*}
Thus, we have $\beta(\epsilon)\circ \alpha = (k_I)_{0\to 2\epsilon}$. Similarly, we have $\alpha(\epsilon)\circ \beta = (k_J)_{0 \to 2\epsilon}$. Therefore, the pair $\alpha$ and $\beta$ is a $\Lambda_\epsilon$-interleaving between $k_I$ and $k_J$. 
\end{proof}

As a consequence, we relate $\epsilon$-matching and bottleneck $\epsilon$-interleaving.

\begin{proposition}\label{lem:inequality}
Let $P$ be a poset with an $\mR$-action $\Lambda$. For interval-decomposable pfd $P$-persistence modules $V$ and $W$,
     if there exists a $\Lambda_\epsilon$-matching between $\mathscr{B}(V)$ and $\mathscr{B}(W)$ for an $\epsilon\geq0$, then it gives a bottleneck $\Lambda_\epsilon$-interleaving between $V$ and $W$. Therefore, we have
    \begin{equation*}
        d_{\rm{BI}}^\Lambda(V,W) \leq d_{\rm{B}}^\Lambda(\mathscr{B}(V),\mathscr{B}(W)).
    \end{equation*}
\end{proposition}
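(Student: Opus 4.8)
The plan is to turn a given $\Lambda_\epsilon$-matching $\sigma\colon\mathscr{B}(V)\nrightarrow\mathscr{B}(W)$ directly into a bottleneck $\Lambda_\epsilon$-interleaving, checking the three defining conditions of Definition~\ref{def:bottle2} one at a time. Since $V$ and $W$ are interval-decomposable, we may fix decompositions so that $\mathscr{B}(V)$ and $\mathscr{B}(W)$ are multisets of intervals of $P$; the partial matching $\sigma$ is then literally a partial matching of the multisets of summands, so the only thing to verify is that the unmatched summands are $\Lambda_{2\epsilon}$-trivial and that matched pairs are $\Lambda_\epsilon$-interleaved.

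First I would dispose of the unmatched summands: if $I\in\mathscr{B}(V)\setminus\coim\sigma$, then condition~(1) of Definition~\ref{def:matching} says the interval module $k_I$ is $\Lambda_{2\epsilon}$-trivial, which is exactly condition~(1) of Definition~\ref{def:bottle2}; symmetrically for $J\in\mathscr{B}(W)\setminus\im\sigma$. For a matched pair $\sigma(I)=J$, condition~(3) of the matching gives $I\subseteq\Ex^\Lambda_\epsilon(J)$ and $J\subseteq\Ex^\Lambda_\epsilon(I)$, and then Proposition~\ref{prop:matching-induce-matching} immediately yields that $k_I$ and $k_J$ are $\Lambda_\epsilon$-interleaved — this is precisely condition~(3) of Definition~\ref{def:bottle2}. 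Hence $\sigma$ is a bottleneck $\Lambda_\epsilon$-interleaving between $V$ and $W$.

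For the distance inequality, I would argue as follows: for any $\epsilon > d_{\rm B}^\Lambda(\mathscr{B}(V),\mathscr{B}(W))$ there exists a $\Lambda_\epsilon$-matching between $\mathscr{B}(V)$ and $\mathscr{B}(W)$, hence by the above a bottleneck $\Lambda_\epsilon$-interleaving between $V$ and $W$, so $d_{\rm BI}^\Lambda(V,W)\leq\epsilon$; taking the infimum over such $\epsilon$ gives $d_{\rm BI}^\Lambda(V,W)\leq d_{\rm B}^\Lambda(\mathscr{B}(V),\mathscr{B}(W))$.

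The only genuine subtlety — and the reason Proposition~\ref{prop:matching-induce-matching} and the lemmas preceding it were set up — is the passage from the containment conditions $I\subseteq\Ex^\Lambda_\epsilon(J)$, $J\subseteq\Ex^\Lambda_\epsilon(I)$ to an actual interleaving of interval modules; but since that work is already done in Proposition~\ref{prop:matching-induce-matching}, the present statement is essentially bookkeeping. One minor point I would be careful about is that the notion of "bottleneck $\Lambda_\epsilon$-interleaved" is independent of the chosen decompositions of $V$ and $W$ (noted after Definition~\ref{def:bottle2}), so it is harmless to fix interval decompositions at the outset.
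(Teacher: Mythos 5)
Your proposal is correct and follows essentially the same route as the paper: both reduce the matched-pair case to Proposition~\ref{prop:matching-induce-matching}, and your unpacking of the unmatched cases (conditions (1)--(2) of the two definitions coinciding verbatim) is exactly what the paper leaves implicit in its one-line proof.
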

\begin{proof}
Let $V$ and $W$ be interval-decomposable pfd $P$-persistence modules.
If there exists a $\Lambda_\epsilon$-matching between $\mathscr{B}(V)$ and $\mathscr{B}(W)$, then by Proposition \ref{prop:matching-induce-matching}, it gives a bottleneck $\Lambda_\epsilon$-interleaving between $V$ and $W$. 
\end{proof}

\subsubsection{Relation to graph matching}\label{sec:graph}
  The bottleneck $\epsilon$-matching between pfd persistence modules can be expressed using graph theory. The following discussion is essentially based on  \cite{bjerkevik2021onthesatbility}, where it gives an upper bound on a bottleneck interleaving distance between rectangle decomposable pfd $\mR^n$-persistence modules using an interleaving distance between them.

We recall the basic terminology of graph theory. Let $G=(V, E)$ be a graph, where $V$ is a (not necessarily finite) set whose elements are called \emph{vertices}, and $E$ is a (not necessarily finite) set of unordered pairs $(v_1,v_2)$ of vertices, whose elements are called \emph{edges}. 
A \emph{matching} in $G$ is a set of edges such that no two edges share a common vertex.
For subsets $V' \subseteq V$ and $E' \subseteq E$, we say that $E'$ \emph{covers} $V'$ if any vertex in $V'$ is adjacent to an edge in $E'$. 
The \emph{full subgraph} induced by $V'$ is the subgraph whose vertex set is $V'$ and whose edge set consists of all the edges $(v_1,v_2)\in E$ such that $ v_1, v_2 \in V'$. 
The set of \emph{neighbors} of a vertex $v$ in $G$, denoted by $N_G(v)$, is the set of vertices that are adjacent to $v$.
 For a subset $V' \subseteq V$, we write $N_G(V')$ for $\bigcup_{v \in V'} N_G(v)$.

A \emph{bipartite graph} is a graph whose vertex set is a disjoint union of two sets, $X$ and $Y$, and whose edge set $E$ consists of edges connecting a vertex in $X$ to a vertex in $Y$. In this case, we write this bipartite graph by $G=(X, Y; E)$. In addition, for subsets $X' \subseteq X$ and $Y'\subseteq Y$, we denote by $G(X', Y')$ the full subgraph of $G$ induced by $X'$ and $Y'$.

To state Hall's marriage theorem, we set the following conditions.
\begin{definition}\label{condition}
 Let $G =(X, Y; E)$ be a bipartite graph. We say that $G$ satisfies Conditions {\rm (H)} and {\rm (H$'$)} respectively if it satisfies the following.
    \begin{itemize}
        \item[{\rm (H)}] For any finite subset $X'$ of $X$, we have  $|X'| \leq | N_G (X')| $.
    \end{itemize}\label{conditionH}
    \begin{itemize}
        \item[{\rm (H$'$)}] For any finite subset $Y'$ of $Y$, we have  $|Y'| \leq | N_G (Y')|$.
    \end{itemize}\label{conditionH'}
\end{definition}

\begin{theorem}[{\cite[Theorem 1]{Hall1935}}]
    Let $G=(X,Y;E)$ be a bipartite graph 
    such that $N_G(x)$ is finite for all $x\in X$.  The following statements are equivalent.
    \begin{enumerate}
        \item[\rm (a)] The graph $G$ satisfies Condition {\rm (H)}.
        \item[\rm (b)]There exists a matching in $G$ that covers $X$.
    \end{enumerate}
\end{theorem}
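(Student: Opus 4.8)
The plan is to prove the only non-trivial implication, namely that Condition (H) implies the existence of a matching in $G$ covering $X$; the converse is immediate, since if $M$ is a matching covering $X$ and $X'\subseteq X$ is finite, the $M$-partners of the vertices of $X'$ form a subset of $N_G(X')$ of size $|X'|$. For the forward direction I would first reduce to the finite case and then invoke a compactness argument to pass to the general (possibly infinite) $X$ and $Y$. Concretely, since each $N_G(x)$ is finite, the full subgraph $G(X',N_G(X'))$ is finite for every finite $X'\subseteq X$, and it inherits Condition (H). So it suffices to (i) prove the theorem when $X$ is finite (then $N_G(X)$ is automatically finite too), and (ii) bootstrap to arbitrary $X$.

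For step (i), the finite case, I would run the standard induction on $|X|$. The base case $|X|\le 1$ is clear. For the inductive step, distinguish two cases. \emph{Case A: every nonempty proper finite subset $X'\subsetneq X$ satisfies the strict inequality $|X'|<|N_G(X')|$.} Pick any $x\in X$ and any neighbor $y\in N_G(x)$, match $x$ to $y$, and delete both; the remaining bipartite graph on $X\setminus\{x\}$ and $Y\setminus\{y\}$ still satisfies (H) because removing one vertex from $Y$ drops each neighborhood size by at most one, and we had strict slack. By induction it has a matching covering $X\setminus\{x\}$; adjoining the edge $(x,y)$ finishes this case. \emph{Case B: there is a nonempty proper subset $X_0\subsetneq X$ with $|X_0|=|N_G(X_0)|$.} The full subgraph $G(X_0,N_G(X_0))$ satisfies (H) and has fewer than $|X|$ vertices on the $X$-side, so by induction it admits a matching $M_0$ covering $X_0$; note $M_0$ saturates all of $N_G(X_0)$. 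Now consider $G'=G(X\setminus X_0,\, Y\setminus N_G(X_0))$. I would check $G'$ satisfies (H): for $X'\subseteq X\setminus X_0$, the neighbors of $X'$ in $G'$ are $N_G(X')\setminus N_G(X_0)$, and applying (H) in $G$ to $X'\cup X_0$ gives $|X'|+|X_0|\le |N_G(X'\cup X_0)| = |N_G(X_0)| + |N_G(X')\setminus N_G(X_0)|$, whence $|X'|\le |N_{G'}(X')|$. By induction $G'$ has a matching $M_1$ covering $X\setminus X_0$, and $M_0\cup M_1$ is the desired matching covering $X$.

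For step (ii), the passage to infinite $X$, I would use a compactness/Tychonoff argument. For each $x\in X$ let $Y_x:=N_G(x)$, a finite nonempty set (nonemptiness follows from (H) applied to $\{x\}$). A matching covering $X$ is exactly a choice function $\varphi\in\prod_{x\in X}Y_x$ that is injective. Endow $\prod_{x\in X}Y_x$ with the product topology (each factor discrete, hence compact), so the whole product is compact. For each pair $x\ne x'$ the set $C_{x,x'}:=\{\varphi : \varphi(x)\ne\varphi(x')\}$ is closed (indeed clopen, as it depends on only two coordinates). The family $\{C_{x,x'}\}$ has the finite intersection property: any finitely many constraints involve only a finite subset $X'\subseteq X$, and by step (i) applied to $G(X',N_G(X'))$ there is an injective choice function on $X'$, which extends arbitrarily on $X\setminus X'$ to a point of the relevant finite intersection. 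By compactness $\bigcap_{x\ne x'}C_{x,x'}\ne\emptyset$, and any $\varphi$ in it is a global injective choice function, i.e.\ a matching covering $X$.

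The main obstacle is bookkeeping in Case B of the finite induction — verifying that both the restricted subgraph $G(X_0,N_G(X_0))$ and the complementary subgraph $G'$ still satisfy Hall's condition, and that the two matchings one gets are genuinely disjoint (they are, since they live on disjoint vertex sets by construction). Everything else is routine; the compactness step is standard once the problem is phrased as finding an injective element of a product of finite discrete spaces.
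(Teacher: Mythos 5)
The paper does not prove this theorem; it cites it to Hall (1935) and uses it as a black box, so there is no in-text argument to compare against. Your proof is correct and is the standard modern argument: the finite case by induction on $|X|$ with the usual ``tight set'' vs.\ ``strict slack'' dichotomy, and the passage to infinite $X$ by viewing a matching covering $X$ as an injective point of the compact product $\prod_{x\in X}N_G(x)$ and invoking Tychonoff together with the finite case to get the finite intersection property. Two small points worth making explicit if you write this up: (a) in Case~A you use that $X''\subseteq X\setminus\{x\}$ is automatically a \emph{proper} subset of $X$, so the strict slack applies and survives deleting the single vertex $y$ from $Y$; (b) the hypothesis that each $N_G(x)$ is finite is essential, both for the compactness step and for the truth of the theorem itself --- without it the statement fails (take $X=\{x_0,x_1,x_2,\dots\}$, $Y=\mathbb{N}$, $N_G(x_0)=\mathbb{N}$, and $N_G(x_i)=\{i\}$ for $i\ge 1$: Condition (H) holds but no matching covers $X$).
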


Then, we have the following observation (see \cite[p.111]{bjerkevik2021onthesatbility} for its proof).
\begin{corollary}\label{cor:matching}
    Let $G=(X, Y; E)$ be a bipartite graph. Let $X'$ and $Y'$ be subsets of $X$ and $Y$ respectively such that both neighbors $N_{G}(x')$ and $N_{G}(y')$ are finite for any $x' \in X'$ and $y' \in Y'$.
    If the full subgraphs $G(X', Y)$ and $G(X, Y')$ satisfy Conditions {\rm (H)} and {\rm (H$'$)} respectively, then there exists a matching in $G$ that covers $X'\sqcup Y'$. 
\end{corollary}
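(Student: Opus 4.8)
The plan is to build the desired matching in two stages — first matching all of $X'$ into $Y$, then extending to cover $Y'$ — and to glue the two halves together along an alternating-path argument. First I would apply Hall's theorem to the full subgraph $G(X',Y)$: since $N_{G(X',Y)}(x') = N_G(x')$ is finite for every $x'\in X'$ and Condition~(H) holds for $G(X',Y)$ by hypothesis, Hall's theorem produces a matching $M_1$ in $G(X',Y)$ that covers $X'$. Symmetrically, applying Hall's theorem to $G(X,Y')$ (with the roles of the two sides swapped, using Condition~(H$'$) and finiteness of the $N_G(y')$) yields a matching $M_2$ in $G(X,Y')$ that covers $Y'$.

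The core of the argument is then to merge $M_1$ and $M_2$ into a single matching $M$ covering $X'\sqcup Y'$. Consider the subgraph $H$ of $G$ whose edge set is the symmetric difference $M_1\triangle M_2$ together with the edges in $M_1\cap M_2$; more cleanly, look at $M_1\cup M_2$ as a multigraph. Every vertex has degree at most $2$ in $M_1\cup M_2$ (at most one edge from each $M_i$), so the connected components of $M_1\cup M_2$ are isolated vertices, simple paths, and even cycles with edges alternating between $M_1$ and $M_2$. For each such component I would pick the ``correct'' one of the two matchings restricted to that component: for a cycle, either choice covers all its vertices; for a path, I choose the perfect matching on it if one exists, and otherwise the path has odd length with both endpoints covered by the same $M_i$, and I must argue that the endpoint \emph{not} forced to be in $X'\sqcup Y'$ is the one we are allowed to drop. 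Concretely, a maximal alternating path ending at a vertex $v$ that is $M_1$-unmatched must have $v\notin X'$ (since $M_1$ covers $X'$), and likewise an $M_2$-unmatched endpoint lies outside $Y'$; choosing $M_2$ on such a path covers every vertex of the path except possibly that $M_1$-unmatched endpoint, which is not in $X'$, and the other endpoint is then $M_2$-matched hence covered — symmetrically for the other parity. Taking the union of these per-component choices gives a matching $M$ of $G$, and by construction every vertex of $X'$ and every vertex of $Y'$ lies in a component where the chosen matching covers it.

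The main obstacle is the bookkeeping in the path components: one must check that an alternating path cannot have one endpoint in $X'$ that is $M_1$-unmatched and simultaneously an $M_2$-unmatched endpoint in $Y'$ of the same path, which would force both endpoints to be uncovered by either single choice. But this cannot happen: $M_1$ covers all of $X'$, so an $M_1$-unmatched vertex is never in $X'$; $M_2$ covers all of $Y'$, so an $M_2$-unmatched vertex is never in $Y'$; and in an alternating path the two endpoints are ``unmatched'' with respect to different $M_i$ only when the path has even length, in which case it has a perfect matching anyway. So in every case at least one of $M_1|_{\text{component}}$, $M_2|_{\text{component}}$ covers all vertices of that component that belong to $X'\sqcup Y'$, and assembling these yields the matching covering $X'\sqcup Y'$. (This is exactly the argument sketched at \cite[p.111]{bjerkevik2021onthesatbility}, which we follow.)
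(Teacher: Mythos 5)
Your overall plan — apply Hall's theorem once to each side to obtain matchings $M_1$ covering $X'$ and $M_2$ covering $Y'$, then merge along the path/cycle decomposition of $M_1\cup M_2$ — is the argument the paper is alluding to (the paper itself only cites \cite[p.111]{bjerkevik2021onthesatbility} without spelling out the merge step). The two safety facts you isolate, namely that an $M_1$-unmatched vertex of $X$ cannot lie in $X'$ and an $M_2$-unmatched vertex of $Y$ cannot lie in $Y'$, are exactly what one needs.

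However, the case analysis for the path components is stated backwards and would not go through as written. You claim that "the two endpoints are unmatched with respect to different $M_i$ only when the path has even length, in which case it has a perfect matching anyway." In fact, if the two endpoints of an alternating path are unmatched with respect to \emph{different} $M_i$, the path has an \emph{even} number of edges, hence an \emph{odd} number of vertices, and so admits \emph{no} perfect matching — this is precisely the case where you must make a genuine choice. (A path has a perfect matching exactly when it has an odd number of edges, and then both endpoints are unmatched with respect to the \emph{same} $M_i$, with the \emph{other} $M_i$ giving the perfect matching.) Likewise, "choosing $M_2$ on such a path covers every vertex except the $M_1$-unmatched endpoint" is reversed: $M_2$ restricted to the path misses precisely the $M_2$-unmatched endpoint. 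The step you omit is what resolves the even-edges case correctly: in a bipartite graph, a path with an even number of edges has both endpoints on the \emph{same} side. If both lie in $X$, the $M_1$-unmatched endpoint is $\notin X'$ and is exactly the vertex $M_1|_{\text{path}}$ misses, so choose $M_1$ there; if both lie in $Y$, choose $M_2|_{\text{path}}$, which misses only the $M_2$-unmatched endpoint $\notin Y'$. With this correction the component-by-component selection covers all of $X'\sqcup Y'$ and the proof is complete.
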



Let $P$ be a poset and $\Lambda$ be an $\mR_{\geq0}$-action on $P$.
We study a bottleneck $\Lambda_\epsilon$-interleaving below. We first construct a bipartite graph using two pfd $P$-persistence modules $V$ and $W$. For $M \in \mathscr{B}(V)$ and $\epsilon \geq 0$, we set \begin{equation}\label{eq:mu}
    \mu_\epsilon^\Lambda(M) := \{N \in  \mathscr{B}(W)  \mid  \text{$M$ and $N$ are $\Lambda_\epsilon$-interleaved}\}. 
\end{equation}
In addition, for $\mathscr{A} \subseteq \mathscr{B}(V)$, we set 
\begin{equation}
\mu_\epsilon^\Lambda(\mathscr{A}) := \bigcup_{M\in \mathscr{A}} \mu_\epsilon^\Lambda(M).
\end{equation}
Using the above notation, we define a bipartite graph $(\mathscr{B}(V), \mathscr{B}(W); E_{\mu_\epsilon^\Lambda})$ where $E_{\mu_\epsilon^\Lambda}$ is the set of edges given by $\bigcup_{M\in \mathscr{B}(V) }\{ (M,N)  \mid  N \in \mu_\epsilon^\Lambda(M)  \}$. Notice that $(\mathscr{B}(V), \mathscr{B}(W); E_{\mu_\epsilon^\Lambda})$ and $(\mathscr{B}(W), \mathscr{B}(V); E_{\mu_\epsilon^\Lambda})$ are the same bipartite graph.

We denote by $\mathscr{B}_{2\epsilon}(V)$ the multiset of $\Lambda_{2\epsilon}$-significant $P$-persistence modules in $\mathscr{B}(V)$.  We give a connection between bottleneck $\Lambda_\epsilon$-interleaving and matching in a bipartite graph. 


%
\begin{proposition}[{\cite[p.4]{bjerkevik2021onthesatbility}}]\label{prop:matching-and-ematching}
    Let $P$ be a poset with an $\mR_{\geq0}$-action $\Lambda$.
    Let $V$ and $W$ be pfd $P$-persistence modules. For any 
 $\epsilon\geq0$, the following statements are equivalent.
    \begin{enumerate}
        \item[\rm (a)] There exists a bottleneck $\Lambda_\epsilon$-interleaving between $V$ and $W$.
        \item[\rm (b)] There exists a matching in the bipartite graph $(\mathscr{B}(V), \mathscr{B}(W); E_{\mu_\epsilon^\Lambda})$ that covers $\mathscr{B}_{2\epsilon}(V)\sqcup \mathscr{B}_{2\epsilon}(W)$.
    \end{enumerate}
\end{proposition}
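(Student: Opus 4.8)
The plan is to prove Proposition~\ref{prop:matching-and-ematching} by translating the combinatorial content of a bottleneck $\Lambda_\epsilon$-interleaving into the language of matchings in the bipartite graph $(\mathscr{B}(V), \mathscr{B}(W); E_{\mu_\epsilon^\Lambda})$, and then invoking Corollary~\ref{cor:matching} (i.e.\ the Hall-type criterion) to pass between the two. I would first fix multiset decompositions $V \cong \bigoplus_{M\in\mathscr{B}(V)} M$ and $W\cong\bigoplus_{N\in\mathscr{B}(W)} N$, and note that whether a summand is $\Lambda_{2\epsilon}$-trivial (equivalently, $\Lambda_{2\epsilon}$-significant) is a property of the isomorphism class, so $\mathscr{B}_{2\epsilon}(V)$ and $\mathscr{B}_{2\epsilon}(W)$ are well defined. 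The key dictionary is: an edge $(M,N)$ of the graph exists precisely when $M$ and $N$ are $\Lambda_\epsilon$-interleaved, which is exactly condition (3) in the definition of a bottleneck $\Lambda_\epsilon$-interleaving; and the ``unmatched summands are $\Lambda_{2\epsilon}$-trivial'' requirements (1) and (2) say exactly that every element of $\mathscr{B}_{2\epsilon}(V)$ and of $\mathscr{B}_{2\epsilon}(W)$ must be covered by the matching.

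For the implication (a)$\Rightarrow$(b): given a bottleneck $\Lambda_\epsilon$-interleaving $\sigma\colon\mathscr{B}(V)\nrightarrow\mathscr{B}(W)$, I would take the set of edges $\{(M,\sigma(M)) \mid M\in\coim\sigma\}$. By condition (3) each such pair is $\Lambda_\epsilon$-interleaved, hence $(M,\sigma(M))\in E_{\mu_\epsilon^\Lambda}$, so this is a genuine subset of the edge set; since $\sigma$ is a bijection on its (co)image, no two of these edges share a vertex, so it is a matching. Finally, if $M\in\mathscr{B}_{2\epsilon}(V)$ is not covered, then $M\notin\coim\sigma$, so by condition (1) $M$ is $\Lambda_{2\epsilon}$-trivial, contradicting $\Lambda_{2\epsilon}$-significance; symmetrically using condition (2) for $\mathscr{B}_{2\epsilon}(W)$. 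Hence the matching covers $\mathscr{B}_{2\epsilon}(V)\sqcup\mathscr{B}_{2\epsilon}(W)$.

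For the converse (b)$\Rightarrow$(a): a matching $\Sigma$ in $(\mathscr{B}(V),\mathscr{B}(W);E_{\mu_\epsilon^\Lambda})$ covering $\mathscr{B}_{2\epsilon}(V)\sqcup\mathscr{B}_{2\epsilon}(W)$ defines, by the no-shared-vertex property, a partial matching $\sigma\colon\mathscr{B}(V)\nrightarrow\mathscr{B}(W)$ with $(M,\sigma(M))\in E_{\mu_\epsilon^\Lambda}$ for all $M\in\coim\sigma$; that edge condition is condition (3). Condition (1): if $M\in\mathscr{B}(V)\setminus\coim\sigma$ is not covered by $\Sigma$, then $M\notin\mathscr{B}_{2\epsilon}(V)$ (since every element of $\mathscr{B}_{2\epsilon}(V)$ is covered), i.e.\ $M$ is $\Lambda_{2\epsilon}$-trivial; condition (2) is symmetric. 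Thus $\sigma$ is a bottleneck $\Lambda_\epsilon$-interleaving.

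I do not expect a serious obstacle here — the statement is essentially a definitional unwinding — but the one point requiring care is the bookkeeping with multisets: a matching in the bipartite graph is a set of edges, while a partial matching between multisets is a bijection between sub-multisets, so I would be explicit that a vertex of the graph is an indexed copy of an indecomposable summand (an element of the index set $\Lambda$), so that ``matching = no shared vertex'' genuinely corresponds to ``injective partial function on the indexing multiset''. With that convention fixed, the equivalence of (a) and (b) is a direct correspondence between the three defining conditions of Definition~\ref{def:bottle2} and the two features (being a matching, covering $\mathscr{B}_{2\epsilon}(V)\sqcup\mathscr{B}_{2\epsilon}(W)$) of the graph-theoretic statement; Corollary~\ref{cor:matching} is not actually needed for this proposition itself but will be the tool used downstream to verify (b).
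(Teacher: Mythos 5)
Your proof is correct and takes the same route the paper implicitly does: the paper cites this as an observation from Bjerkevik and treats it as a direct definitional unwinding (the commented-out proof in the source reads ``It clearly follows from the construction of the bipartite graph''), which is exactly what you supply in detail, including the right care with multisets and the correct observation that Corollary~\ref{cor:matching} is not needed here.
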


\begin{lemma}[{\cite[p.110]{bjerkevik2021onthesatbility}}]\label{lem:finness} 
 Let $V$ and $W$ be pfd $P$-persistence modules and $G = (\mathscr{B}(V), \mathscr{B}(W); E_{\mu_\epsilon^\Lambda})$ be the bipartite graph for an $\epsilon\geq0$.  
 If $M \in \mathscr{B}_{2\epsilon}(V)$, then the neighbors $N_{G}(M)$ of $M$ is finite. 
\end{lemma}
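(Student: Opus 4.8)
The plan is to use pointwise finite-dimensionality of $W$ to bound the cardinality of $N_G(M)$ by a single dimension count, exploiting the fact that a $\Lambda_{2\epsilon}$-significant module is detected at one fixed point.

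First I would fix, once and for all, a witness of significance. Since $M\in\mathscr{B}_{2\epsilon}(V)$ is $\Lambda_{2\epsilon}$-significant, the morphism $M_{0\to2\epsilon}$ is nonzero, so there is a point $p\in P$ with $M(p,\Lambda_{2\epsilon}(p))\colon M_p\to M_{\Lambda_{2\epsilon}(p)}$ nonzero; set $q:=\Lambda_\epsilon(p)$. Note that $p$, and hence $q$, depends only on $M$, not on any particular $N$.

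Next I would unwind the interleaving at $p$. Recall that $N_G(M)=\mu_\epsilon^\Lambda(M)$ by the construction of the bipartite graph, so any $N\in N_G(M)$ admits a $\Lambda_\epsilon$-interleaving $\alpha\colon M\to N(\epsilon)$, $\beta\colon N\to M(\epsilon)$ with $\beta(\epsilon)\circ\alpha=M_{0\to2\epsilon}$. Evaluating at $p$ and using $N(\epsilon)_p=N_{\Lambda_\epsilon(p)}=N_q$ and $\beta(\epsilon)_p=\beta_{\Lambda_\epsilon(p)}$ gives $\beta_q\circ\alpha_p=M(p,\Lambda_{2\epsilon}(p))$ as maps $M_p\to M_{\Lambda_{2\epsilon}(p)}$. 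Since this composite is nonzero and factors through $N_q$, we get $N_q\neq0$, hence $\dim_k N_q\geq1$, and this holds for every $N\in N_G(M)$ simultaneously.

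Finally I would count dimensions. From $W\cong\bigoplus_{N'\in\mathscr{B}(W)}N'$ (a decomposition indexed by the multiset $\mathscr{B}(W)$) we get $W_q\cong\bigoplus_{N'\in\mathscr{B}(W)}N'_q$, and $\dim_k W_q<\infty$ because $W$ is pfd; therefore
\[
|N_G(M)|\;\leq\;\sum_{N\in N_G(M)}\dim_k N_q\;\leq\;\sum_{N'\in\mathscr{B}(W)}\dim_k N'_q\;=\;\dim_k W_q\;<\;\infty .
\]
There is no genuine obstacle in this argument; the only points requiring care are the indexing of the shift functor ($N(\epsilon)_p=N_{\Lambda_\epsilon(p)}$) and the bookkeeping of multiplicities — each copy of a repeated indecomposable summand of $W$ is a distinct vertex of the bipartite graph and must be counted separately, which is exactly what the multiset sum above does.
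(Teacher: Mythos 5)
Your proof is correct and takes essentially the same approach as the paper's (commented-out) argument: fix a witness point $p$ where $M_{0\to 2\epsilon}$ is nonzero, use the interleaving to force $N_{\Lambda_\epsilon(p)}\neq 0$ for every neighbor $N$, and then bound $|N_G(M)|$ by $\dim_k W_{\Lambda_\epsilon(p)} < \infty$. The shift-functor bookkeeping and the multiset dimension count are both handled correctly.
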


 By Lemma \ref{lem:finness},  Corollary \ref{cor:matching}, and Proposition \ref{prop:matching-and-ematching}, we obtain the following observation. It gives a sufficient condition for the existence of a bottleneck $\Lambda_\epsilon$-interleaving between two pfd persistence modules.

\begin{proposition}[{\cite[p.111]{bjerkevik2021onthesatbility}}]\label{thm:matching}
   Let $P$ be a poset with an $\mR_{\geq0}$-action $\Lambda$.  Let $V$ and $W$ be pfd $P$-persistence modules and $\epsilon\geq0$. For the bipartite graph $G =(\mathscr{B}(V), \mathscr{B}(W); E_{\mu_\epsilon^\Lambda})$, if the full subgraphs $G(\mathscr{B}_{2\epsilon}(V), \mathscr{B}(W))$ and $G(\mathscr{B}(V), \mathscr{B}_{2\epsilon}(W))$ satisfy Conditions {\rm (H)} and  {\rm (H$'$)}, respectively, then there exists a bottleneck $\Lambda_\epsilon$-interleaving between $V$ and $W$. 
\end{proposition}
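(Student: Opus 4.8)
The plan is to obtain the conclusion by chaining together Lemma~\ref{lem:finness}, Corollary~\ref{cor:matching}, and Proposition~\ref{prop:matching-and-ematching}, with the bookkeeping set up so that the hypotheses line up. First I would fix the bipartite graph $G = (\mathscr{B}(V), \mathscr{B}(W); E_{\mu_\epsilon^\Lambda})$ and put $X := \mathscr{B}(V)$, $Y := \mathscr{B}(W)$, $X' := \mathscr{B}_{2\epsilon}(V) \subseteq X$, and $Y' := \mathscr{B}_{2\epsilon}(W) \subseteq Y$. With these identifications, the full subgraphs $G(X', Y)$ and $G(X, Y')$ appearing in Corollary~\ref{cor:matching} are precisely $G(\mathscr{B}_{2\epsilon}(V), \mathscr{B}(W))$ and $G(\mathscr{B}(V), \mathscr{B}_{2\epsilon}(W))$, so the Conditions (H) and (H$'$) assumed in the present statement are exactly the Conditions (H) and (H$'$) demanded by the corollary.

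Next I would check the remaining hypothesis of Corollary~\ref{cor:matching}, namely that $N_G(x')$ and $N_G(y')$ are finite for all $x' \in X'$ and $y' \in Y'$. The first is immediate from Lemma~\ref{lem:finness} applied to $M \in \mathscr{B}_{2\epsilon}(V)$. For the second, I would use that $(\mathscr{B}(V), \mathscr{B}(W); E_{\mu_\epsilon^\Lambda})$ and $(\mathscr{B}(W), \mathscr{B}(V); E_{\mu_\epsilon^\Lambda})$ are literally the same bipartite graph (as noted after the definition of $E_{\mu_\epsilon^\Lambda}$), so that Lemma~\ref{lem:finness} with the roles of $V$ and $W$ interchanged yields finiteness of $N_G(N)$ for every $N \in \mathscr{B}_{2\epsilon}(W)$. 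Corollary~\ref{cor:matching} then produces a matching in $G$ that covers $X' \sqcup Y' = \mathscr{B}_{2\epsilon}(V) \sqcup \mathscr{B}_{2\epsilon}(W)$.

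Finally, I would invoke Proposition~\ref{prop:matching-and-ematching}: the existence of a matching in $G = (\mathscr{B}(V), \mathscr{B}(W); E_{\mu_\epsilon^\Lambda})$ covering $\mathscr{B}_{2\epsilon}(V) \sqcup \mathscr{B}_{2\epsilon}(W)$ is equivalent to the existence of a bottleneck $\Lambda_\epsilon$-interleaving between $V$ and $W$, which is the desired conclusion. Since each step is a direct application of a result already established, there is no genuine obstacle here; the only points needing care are the two pieces of bookkeeping above — matching the full subgraphs in the hypothesis with those required by Corollary~\ref{cor:matching}, and applying the finiteness lemma symmetrically in $V$ and $W$ via the symmetry of the graph.
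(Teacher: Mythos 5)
Your proof is correct and follows exactly the route the paper intends: Lemma~\ref{lem:finness} (applied symmetrically using the fact that $(\mathscr{B}(V),\mathscr{B}(W);E_{\mu_\epsilon^\Lambda})$ and $(\mathscr{B}(W),\mathscr{B}(V);E_{\mu_\epsilon^\Lambda})$ are the same graph) gives the finiteness hypothesis of Corollary~\ref{cor:matching}, which produces a matching covering $\mathscr{B}_{2\epsilon}(V)\sqcup\mathscr{B}_{2\epsilon}(W)$, and Proposition~\ref{prop:matching-and-ematching} converts that matching into a bottleneck $\Lambda_\epsilon$-interleaving. The bookkeeping you spell out — identifying the full subgraphs and the symmetric application of the finiteness lemma — is exactly what the paper leaves implicit.
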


\section{Bipath persistence}\label{sec:bipath}

In this section, we first recall finite bipath posets, introduced in \cite{aoki2024bipathpersistence} in the context of the interval-decomposability of persistence modules over finite posets. We also recall bipath persistent homology, which was proposed in \cite{aoki2024bipathpersistence} as an extension of standard one-parameter persistent homology.
Next, we introduce the continuous version of finite bipath posets (Definition \ref{def:bipathposet}). Pfd persistence modules over this poset also admit interval-decomposability (Theorem \ref{thm:decomp}).
Then, we introduce bipath functions (Definition \ref{def:bipath_function}), which induce bipath persistent homology.

\subsection{Finite bipath posets}\label{subsec:finitebipath}
In this subsection, we recall the basics of finite bipath posets from \cite{aoki2024bipathpersistence}.


\begin{definition}[{\cite[Definition 2.3]{aoki2024bipathpersistence}}]\label{def:finite_bipathposet}
Let $n$ and $m$ be non-negative integers. The finite \emph{bipath poset} $B_{n,m}$ is a poset whose underlying set is $\{1,\hdots, n \} \sqcup \{1',\hdots, m'\} \sqcup \{\pm \infty\}$ with the order given by $\{-\infty \leq 1 \leq \cdots \leq  n \leq +\infty \}$ and $\{-\infty\leq 1' \leq \cdots \leq  m' \leq +\infty\}$.  We display the poset $B_{n,m}$ by the following Hasse diagram: 
\begin{equation}
 B_{n,m} \colon \ 
  \begin{tikzcd}[row sep=0.1em,column sep = 1.4em, inner sep=0pt]
    & 1 \rar[] & 2 \rar[] & \cdots \rar[] & n \ar[dr] & \\
    {-\infty} \ar[ur] \ar[dr] & & & &  & {+\infty}. \\
    & 1' \rar[] & 2' \rar[] & \cdots \rar[] & m' \ar[ur] &
  \end{tikzcd}
\end{equation}
 \end{definition}

One important property of pfd persistence modules over finite bipath posets is that they are decomposed into interval modules, analogous to the case of the $A$-type poset (i.e., posets whose Hasse diagrams are Dynkin diagrams of type $A$).
In fact, we have the following result on the interval-decomposability of pfd persistence modules over finite posets.

\begin{theorem}[{\cite[Theorem 1.3]{aoki2023summand}}]\label{thm:finite_bipath}
    For a finite connected poset $P$, the following conditions are
equivalent.
\begin{enumerate}
    \item [{\rm (a)}] Any pfd $P$-persistence module is interval-decomposable.
    \item [{\rm (b)}]The poset $P$ is either an $A$-type poset or a finite bipath poset.
\end{enumerate}
\end{theorem}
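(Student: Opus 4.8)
The plan is to prove the two implications $(b)\Rightarrow(a)$ and $(a)\Rightarrow(b)$ separately. For $(b)\Rightarrow(a)$ the work splits into the classical $A$-type case and the genuinely new bipath case, and for $(a)\Rightarrow(b)$ I would argue by contraposition, exhibiting an explicit non-interval-decomposable pfd module whenever $P$ falls outside the two families. Throughout, observe that a finite poset has finite total dimension, so a pfd $P$-module is a finite direct sum of indecomposable finite-dimensional modules by the Krull--Schmidt theorem (cf. \cite{botnan2020decomposition}); hence in both directions it suffices to understand the \emph{indecomposable} pfd modules, and ``interval-decomposable'' is equivalent to ``every indecomposable summand is an interval module''.

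\textbf{Proof of $(b)\Rightarrow(a)$.} If $P$ is an $A$-type poset, its Hasse diagram is a path, so no two points are joined by two distinct maximal chains, and the incidence algebra is the relation-free path algebra of an arbitrarily oriented $\mathbb{A}_\ell$ quiver; Gabriel's theorem then identifies the indecomposables with the thin modules supported on segments $[i,j]$, i.e.\ with interval modules. If $P=B_{n,m}$, let $V$ be an indecomposable pfd module. Restrict $V$ along the two order-preserving inclusions of the linear chains $C_1=\{-\infty<1<\cdots<n<+\infty\}$ and $C_2=\{-\infty<1'<\cdots<m'<+\infty\}$; by the $A$-type case each restriction is interval-decomposable. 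Using that $V(-\infty,+\infty)$ factors through both $V_n$ and $V_{m'}$, together with indecomposability, one shows first that $\dim_k V_p\le 1$ for all $p$ and then that $\supp V$ is a convex, connected subset (otherwise a nontrivial idempotent of $V$ is produced), whence $V\cong k_{\supp V}$. This direct-classification (equivalently covering-theoretic) argument is carried out in \cite[Theorem 1.3]{aoki2023summand}, which I would either reproduce or cite.

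\textbf{Proof of $(a)\Rightarrow(b)$.} Assume $P$ is finite, connected, and neither $A$-type nor a bipath poset; I will produce a pfd $P$-module that is indecomposable but not isomorphic to any $k_I$. Step 1: if the Hasse diagram of $P$ has a vertex $x$ of degree $\ge 3$, one finds a $D_4$-type non-thin indecomposable supported near $x$: passing to a suitable convex subposet $Q$ realizing a star with centre $x$, whose (relation-free) incidence algebra is a $D_4$ path algebra, take the indecomposable $Q$-module $M$ with a $2$-dimensional space at $x$, and extend it by zero to a $P$-module $\widetilde M$. Since $Q$ is convex, $\End_P(\widetilde M)\cong\End_Q(M)$, so $\widetilde M$ is indecomposable; and if $\widetilde M\cong k_I$ then $M\cong(k_I)|_Q\cong\bigoplus_C k_C$ by Lemma~\ref{lem:convex_pullback}, forcing $M$ to be an interval module of $Q$ --- a contradiction. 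Step 2: if every vertex has Hasse-degree $\le 2$, the Hasse diagram is a disjoint union of paths and cycles, and connectedness forces a single path or a single cycle. A path with any orientation is an $A$-type poset, contradiction. A cycle poset has equally many local minima and local maxima (they alternate around the cycle); if there is exactly one of each, following the two monotone arcs from the minimum to the maximum shows $P\cong B_{n,m}$ for appropriate $n,m$, contradiction; if there are $\ge 2$ of each, no two points are joined by two monotone chains, so the incidence algebra is hereditary of affine type $\tilde A$, hence tame, admitting a $1$-parameter family of indecomposable band modules, none of which is an interval module --- contradiction. Therefore $P$ is $A$-type or bipath.

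The main obstacle is Step 1 of $(a)\Rightarrow(b)$, together with the gluing step in the bipath case of $(b)\Rightarrow(a)$: in both places one must control how the commutativity relations of a general incidence algebra interact with the chosen local model. Concretely, in Step 1 the $\ge 3$ Hasse-neighbours of $x$ need not be pairwise incomparable in $P$, and a convex subposet isolating a clean $D_4$ may fail to exist, so the honest argument requires the careful selection of convex subposets (and, in places, quotient posets) used in \cite{aoki2023summand}. I would present the $A$-type and cycle analyses in full and, for these two delicate points, reproduce or cite the corresponding construction of \cite{aoki2023summand}, which is legitimate since Theorem~\ref{thm:finite_bipath} enters the present paper only as the input guaranteeing interval-decomposability of the (finite) bipath persistence modules used later.
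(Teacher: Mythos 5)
The paper gives no proof of this statement: Theorem~\ref{thm:finite_bipath} is quoted verbatim with a citation to \cite[Theorem~1.3]{aoki2023summand} and is treated throughout as an external input (indeed the paper does not use the hard direction $(a)\Rightarrow(b)$ at all). There is therefore no in-paper argument to compare against, and your proposal, which likewise defers the two genuinely hard steps (the gluing argument showing $B_{n,m}$-modules are thin, and the handling of a Hasse-vertex of degree~$\geq 3$ without a clean convex $D_4$) to \cite{aoki2023summand}, is in effect the same ``proof by citation'' with a useful expository wrapper around it.

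On the parts of the wrapper that you do spell out: the $A$-type case of $(b)\Rightarrow(a)$ via Gabriel, and the Hasse-degree-$\leq 2$ dichotomy (path versus cycle, and cycle splitting into the bipath case and the affine $\widetilde A$ case) in $(a)\Rightarrow(b)$, are correct and match the structure of the cited proof. Two small things worth tightening if you ever expand this into a full argument. First, in the cycle case with~$\geq 2$ local minima you should say a word about small fields: the thin band $M_\lambda$ with $\lambda\neq 1$ exists only when $|k|>2$, so over $\mathbb{F}_2$ you should instead take the band with a Jordan block $J_2(1)$, which has dimension vector $(2,\dots,2)$ and so is manifestly not an interval module. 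Second, your Step~1 reduction ``$\End_P(\widetilde M)\cong\End_Q(M)$ for $Q$ convex'' is the right lemma, but, as you yourself flag, exhibiting a convex $Q$ whose incidence algebra is a relation-free $D_4$ is exactly where the difficulty lives; the honest version needs the more careful convex-subposet/quotient analysis of \cite{aoki2023summand}, and leaving it as a citation is the appropriate choice given how the theorem is used in this paper.
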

For any pfd persistence module $V$ over a finite bipath poset, we can regard $\mathscr{B}(V)$ as a multiset of intervals by its interval-decomposability. We call $\mathscr{B}(V)$ the \emph{bipath persistence diagram} of $V$. 
Similar to the standard persistence diagrams, we can visualize bipath persistence diagrams in the plane as follows. 
 Let $U:=\{-\infty \leq 1 \leq \cdots \leq  n \leq +\infty \}$ and $D:=\{-\infty\leq 1' \leq \cdots \leq  m' \leq +\infty\}$. Then, we divide $\mathbb{I}(B_{n,m})$ into five pairwise disjoint sets $\mathbb{I}(B_{n,m}) =\UU(B_{n,m}) \sqcup \DD(B_{n,m}) \sqcup \BB(B_{n,m}) \sqcup \LL(B_{n,m}) \sqcup \RR(B_{n,m})$ as follows: 
\begin{itemize}
\item $\UU(B_{n,m}) := \{[s,t] \mid s\leq t \in [1,n] \}$.
\item $\DD(B_{n,m}):=\{ [t,s] \mid t\leq s \in [1',m']  \}$.
\item $\BB(B_{n,m}) :=\{B_{n,m}\}$.
    \item $\LL(B_{n,m}) :=\{[-\infty, t] \cup [-\infty, s] \mid t \in U  \text{ and } s \in  D \text{ with } s, t \neq +\infty \}$.
    \item $\RR(B_{n,m}):= \{[s,+\infty] \cup [ t, +\infty] \mid s \in U\text{ and } t \in  D \text{ with } s, t \neq -\infty \}$.
\end{itemize}
If an interval $I$
in $\mathbb{I}(B_{n,m})\setminus\BB(B_{n,m})$ is given by one of the above forms using $s$ and $t$, then we write $I =\langle s, t\rangle$, see Table \ref{tab:intervalsB}. Using the notation, any interval can be visualized by a point in the plane, see Figure~\ref{fig:bipathPD}.

\begin{table}[h]
\renewcommand{\arraystretch}{1.5}
    \begin{tabular}{cccccccc}
        $\UU(B_{n,m})$ & \hspace{-2mm}$\DD(B_{n,m})$ & \hspace{-2mm}$\BB(B_{n,m})$ & \hspace{-2mm} $\LL(B_{n,m})$& \hspace{-2mm}$\RR(B_{n,m})$ 
        \\ 
 \begin{tikzpicture}[baseline = 0mm, scale =0.6]
        \coordinate (x) at (0.4,0);
        \coordinate (y) at (0,0.6); 
        \coordinate (0) at ($0*(x) + 0*(y)$);
        \coordinate (1) at ($1*(x) + 1*(y) - 0.1*(x)$);
        \coordinate (n) at ($5*(x) + 1*(y)$);
        \coordinate (11) at ($6*(x) + 0*(y) - 0.1*(x)$);
        \coordinate (1') at ($1*(x) + -1*(y) - 0.1*(x)$);
        \coordinate (m') at ($5*(x) + -1*(y)$);
        \draw (0)--(1)--(n)--(11)--(m')--(1')--cycle;
        
        \coordinate (s) at ($2*(x) + 1*(y)$);
        \coordinate (t) at ($4*(x) + 1*(y)$);
        \fill (s)circle(0.7mm)node[above]{$s$};
        \fill (t)circle(0.7mm)node[above]{$t$};
        \draw[line width = 0.7mm] (s)--(t);
        \node[left] at (0) {\footnotesize$-\infty$};
        \node[right] at (11) {\footnotesize$+\infty$}; 
        \node[above] at (1) {\footnotesize$1$}; 
        \node[above] at (n) {\footnotesize$n$}; 
        \node[below] at (1') {\footnotesize$1'$}; 
        \node[below] at (m') {\footnotesize$m'$}; 
        \end{tikzpicture} 
        & 
        \hspace{-4mm}
         \begin{tikzpicture}[baseline = 0mm, scale =0.6]
        \coordinate (x) at (0.4,0);
        \coordinate (y) at (0,0.6); 
        \coordinate (0) at ($0*(x) + 0*(y)$);
        \coordinate (1) at ($1*(x) + 1*(y) - 0.1*(x)$);
        \coordinate (n) at ($5*(x) + 1*(y)$);
        \coordinate (11) at ($6*(x) + 0*(y) - 0.1*(x)$);
        \coordinate (1') at ($1*(x) + -1*(y) - 0.1*(x)$);
        \coordinate (m') at ($5*(x) + -1*(y)$);
        \draw (0)--(1)--(n)--(11)--(m')--(1')--cycle;
        
        \coordinate (s) at ($4*(x) + -1*(y)$);
        \coordinate (t) at ($2*(x) + -1*(y)$);
        \fill (s)circle(0.7mm)node[above]{$s$};
        \fill (t)circle(0.7mm)node[above]{$t$};
        \draw[line width = 0.7mm] (t)--(s);
        \node[left] at (0) {\footnotesize$-\infty$};
        \node[right] at (11) {\footnotesize$+\infty$}; 
        \node[above] at (1) {\footnotesize$1$}; 
        \node[above] at (n) {\footnotesize$n$}; 
        \node[below] at (1') {\footnotesize$1'$}; 
        \node[below] at (m') {\footnotesize$m'$}; 
        
        \end{tikzpicture} 
        & 
        \hspace{-4mm}
         \begin{tikzpicture}[baseline = 0mm, scale = 0.6]
        \coordinate (x) at (0.4,0);
        \coordinate (y) at (0,0.6); 
        \coordinate (0) at ($0*(x) + 0*(y)$);
        \coordinate (1) at ($1*(x) + 1*(y) - 0.1*(x)$);
        \coordinate (n) at ($5*(x) + 1*(y)$);
        \coordinate (11) at ($6*(x) + 0*(y) - 0.1*(x)$);
        \coordinate (1') at ($1*(x) + -1*(y) - 0.1*(x)$);
        \coordinate (m') at ($5*(x) + -1*(y)$);
        \draw (0)--(1)--(n)--(11)--(m')--(1')--cycle;
        \node[left] at (0) {\footnotesize$-\infty$};
        \node[right] at (11) {\footnotesize$+\infty$}; 
        \node[above] at (1) {\footnotesize$1$}; 
        \node[above] at (n) {\footnotesize$n$}; 
        \node[below] at (1') {\footnotesize$1'$}; 
        \node[below] at (m') {\footnotesize$m'$}; 
        \draw[line width = 0.7mm] (0)--(1)--(n)--(11)--(m')--(1')--cycle;
        \end{tikzpicture} 
        & \hspace{-4mm}
          \begin{tikzpicture}[baseline = 0mm, scale =0.6]
        \coordinate (x) at (0.4,0);
        \coordinate (y) at (0,0.6); 
        \coordinate (0) at ($0*(x) + 0*(y)$);
        \coordinate (1) at ($1*(x) + 1*(y) - 0.1*(x)$);
        \coordinate (n) at ($5*(x) + 1*(y)$);
        \coordinate (11) at ($6*(x) + 0*(y) - 0.1*(x)$);
        \coordinate (1') at ($1*(x) + -1*(y) - 0.1*(x)$);
        \coordinate (m') at ($5*(x) + -1*(y)$);
        \draw (0)--(1)--(n)--(11)--(m')--(1')--cycle;
        
        \coordinate (s) at ($3*(x) + -1*(y)$);
        \coordinate (t) at ($3*(x) + 1*(y)$);
        \fill (s)circle(0.7mm)node[above]{$s$};
        \fill (t)circle(0.7mm)node[above]{$t$};
        \draw[line width = 0.7mm] (s)--(1')--(0)--(1)--(t);
        \node[left] at (0) {\footnotesize$-\infty$};
        \node[right] at (11) {\footnotesize$+\infty$}; 
        \node[above] at (1) {\footnotesize$1$}; 
        \node[above] at (n) {\footnotesize$n$}; 
        \node[below] at (1') {\footnotesize$1'$}; 
        \node[below] at (m') {\footnotesize$m'$}; 
        \end{tikzpicture} 
        & 
        \hspace{-4mm}
            \begin{tikzpicture}[baseline = 0mm, scale =0.6]
        \coordinate (x) at (0.4,0);
        \coordinate (y) at (0,0.6); 
        \coordinate (0) at ($0*(x) + 0*(y)$);
        \coordinate (1) at ($1*(x) + 1*(y) - 0.1*(x)$);
        \coordinate (n) at ($5*(x) + 1*(y)$);
        \coordinate (11) at ($6*(x) + 0*(y) - 0.1*(x)$);
        \coordinate (1') at ($1*(x) + -1*(y) - 0.1*(x)$);
        \coordinate (m') at ($5*(x) + -1*(y)$);
        \draw (0)--(1)--(n)--(11)--(m')--(1')--cycle;
        
        \coordinate (s) at ($3*(x) + 1*(y)$);
        \coordinate (t) at ($3*(x) + -1*(y)$);
        \fill (s)circle(0.7mm)node[above]{$s$};
        \fill (t)circle(0.7mm)node[above]{$t$};
        \draw[line width = 0.7mm] (s)--(n)--(11)--(m')--(t);
        \node[left] at (0) {\footnotesize$-\infty$};
        \node[right] at (11) {\footnotesize$+\infty$}; 
        \node[above] at (1) {\footnotesize$1$}; 
        \node[above] at (n) {\footnotesize$n$}; 
        \node[below] at (1') {\footnotesize$1'$}; 
        \node[below] at (m') {\footnotesize$m'$}; 
        
        \end{tikzpicture} 
    \end{tabular}
    \caption{Intervals of $B_{n,m}$. 
    Each hexagon represents the Hasse diagram of the bipath poset $B_{n,m}$. 
    The middle one is $B_{n,m}$ itself. For each $\XX \in \{\LL(B_{n,m}),\RR(B_{n,m}),\UU(B_{n,m}),\DD(B_{n,m})\}$, 
    we represent intervals $I=\langle s,t \rangle \in \XX$ by thick lines.  
    }
    \label{tab:intervalsB}
\end{table}

\begin{figure}[htbp]
\begin{center}
\includegraphics[width=110mm]{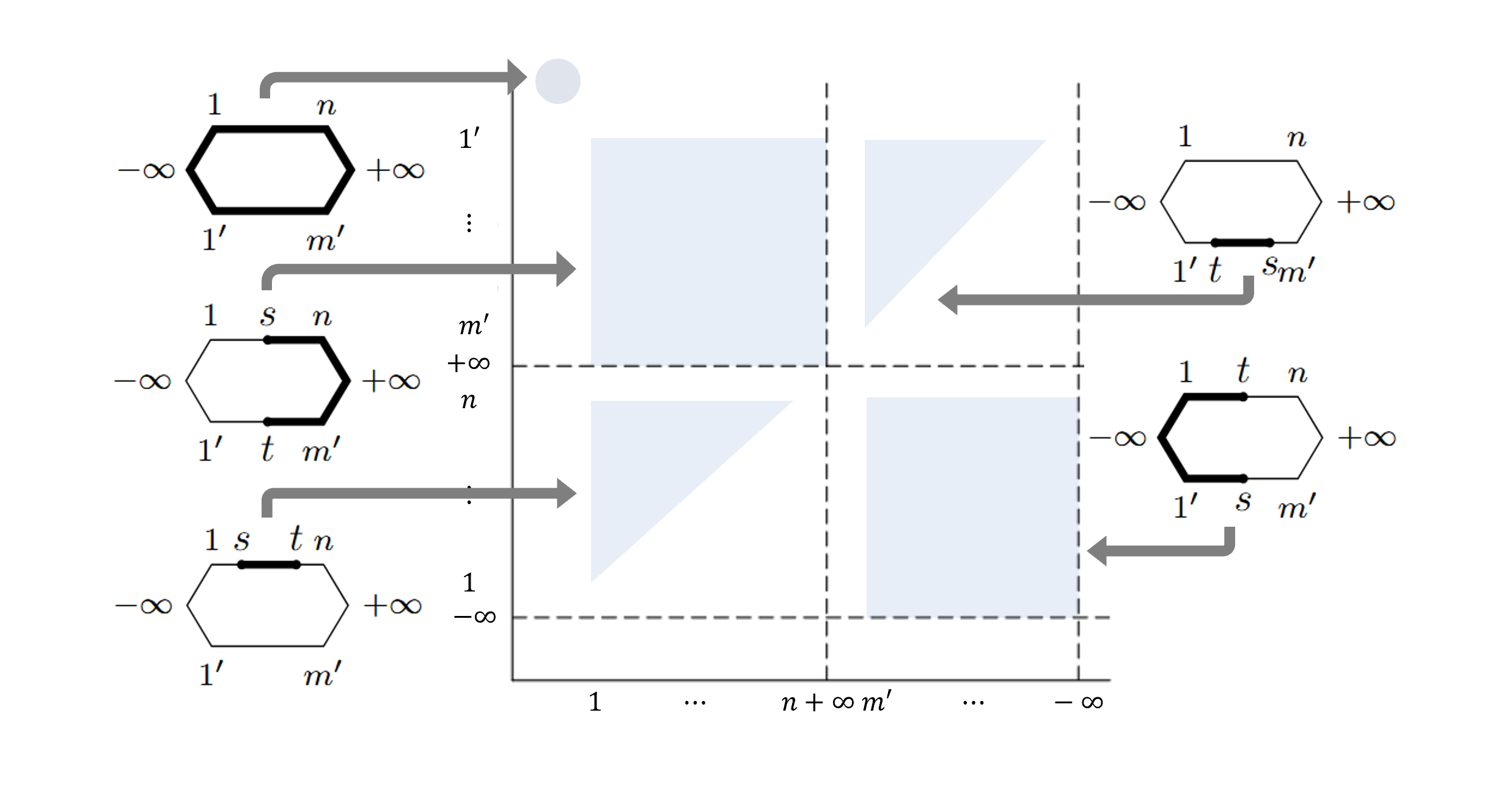}
\caption{A correspondence of intervals of $B_{n,m}$ and points in the plane.
For an interval $I=\langle s,t \rangle \in \mathbb{I}(B_{n,m})\setminus\BB(B_{n,m})$, we plot a point at $(s,t)$. 
For the interval $B_{n,m}$, we plot a point at the upper left region in the plane. 
}
\label{fig:bipathPD}
\end{center}
\end{figure}

We next consider bipath persistent homology.
For this, we consider a $B_{n,m}$-filtration $S$ (a functor from $B_{n,m}$ to the category of topological spaces/simplicial complexes, denoted by $\Top$ and $\Simp$, such that $S_a \hookrightarrow S_b$ for $a\leq b\in B_{n,m}$), which is displayed by the following diagram:
\begin{equation*}
  S\colon \ 
  \begin{tikzcd}[row sep=0.1em,column sep = 1.4em, inner sep=0pt]
    & S_1 \rar[hookrightarrow] & S_2 \rar[hookrightarrow] & \cdots \rar[hookrightarrow] & S_n \ar[dr,hookrightarrow] & \\
    S_{-\infty} \ar[ur,hookrightarrow] \ar[dr,hookrightarrow] & & & &  & S_{+\infty}. \\
    & S_{1'} \rar[hookrightarrow] & S_{2'} \rar[hookrightarrow] & \cdots \rar[hookrightarrow] & S_{m'} \ar[ur,hookrightarrow] &
  \end{tikzcd}
\end{equation*}
By applying the $q$th homology functor $H_q(- ; k)$ to $S$, we obtain the bipath persistent homology of $S$.  
\begin{equation*}
H_q(S;k)\colon \ 
  \begin{tikzcd}[row sep=0.1em,column sep = 1.4em, inner sep=0pt]
    & H_q(S_1;k) \rar[hookrightarrow] & H_q(S_2;k) \rar[hookrightarrow] & \cdots \rar[hookrightarrow] & H_q(S_n;k) \ar[dr,hookrightarrow] & \\
   H_q( S_{-\infty};k) \ar[ur,hookrightarrow] \ar[dr,hookrightarrow] & & & &  & H_q(S_{+\infty};k). \\
    & H_q(S_{1'};k )\rar[hookrightarrow] & H_q(S_{2'};k) \rar[hookrightarrow] & \cdots \rar[hookrightarrow] & H_q(S_{m'};k) \ar[ur,hookrightarrow] &
  \end{tikzcd}
\end{equation*}

If $H_q(S;k)$ is a pfd $B_{n,m}$-persistence module, then it decomposes into interval modules, and we obtain the bipath persistence diagram $\mathscr{B}(H_q(S;k))$.

\begin{example}
Let $\Delta$ be an abstract simplicial complex whose $j$th faces are given by 
\[
\Delta^0:= \{ \{a\},\{b\},\{c\}\}, \Delta^1:= \{ \{a,b\},\{b,c\}, \{a,c \} \}.
\]
Let $S$ be a $B_{2,2}$-filtration: 
\begin{equation*}
  S\colon \ 
  \begin{tikzcd}[row sep=0.7em,column sep = 4.0em, inner sep=0pt]
    & S_1 \rar[hookrightarrow] & S_2  \ar[dr,hookrightarrow] & \\
    S_{-\infty} \ar[ur,hookrightarrow] \ar[dr,hookrightarrow] &   &  & S_{+\infty}, \\
    & S_{1'} \rar[hookrightarrow] & S_{2'} \rar[ru,hookrightarrow]  &
  \end{tikzcd}
\end{equation*}
where each $S_b \ (b\in B_{2,2})$ is given by 
\begin{eqnarray*}
    S_{-\infty} &:=& \{\{a\},\{b\}\},\\ \  S_{+\infty} &:=& \Delta^1 , \\
        S_{1} &:=& \Delta^0 , \\          S_{1'} &:=&  \Delta^0 ,  \\
        S_{2} &:=& \Delta^1 , \\     
 S_{2'} &:=&  \Delta^0\cup \{\{a,b\} \}. 
\end{eqnarray*}
See Figure \ref{fig1} for the geometric realization of the $B_{2,2}$-filtration of simplicial complexes. 

\begin{figure}[htbp]
\begin{center}
\includegraphics[width=90mm]{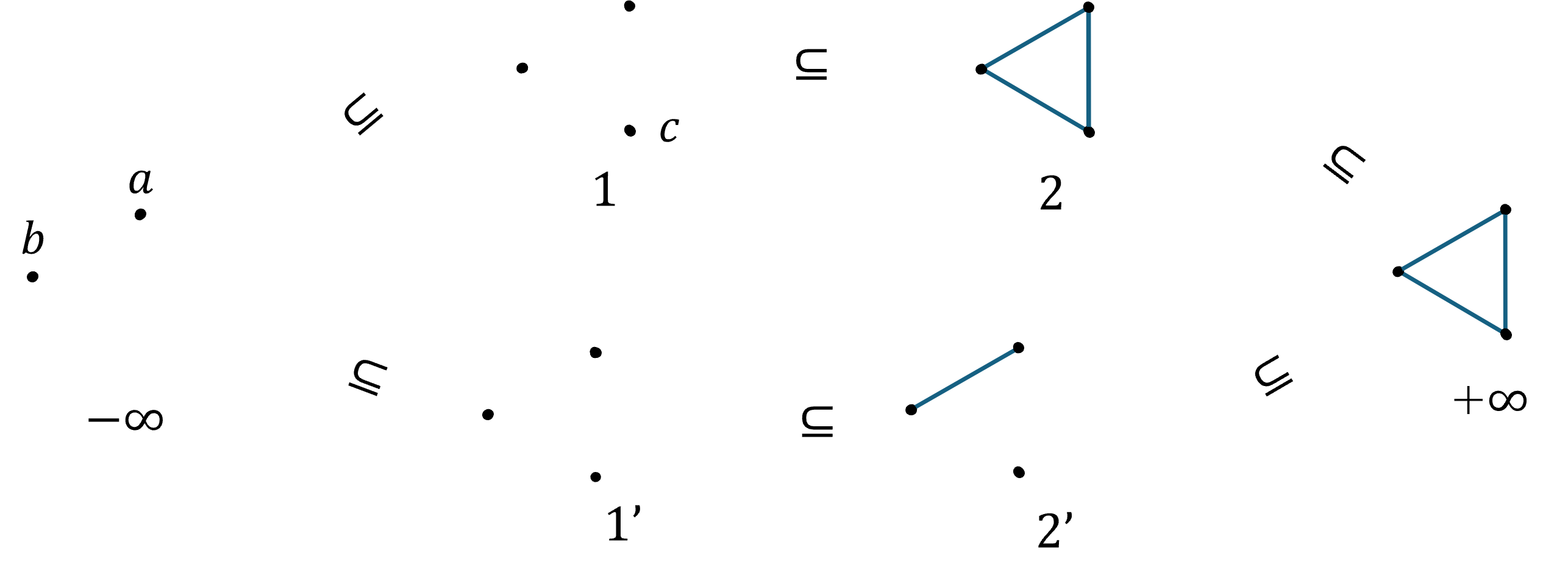}
\caption{A geometric realization of the $B_{2,2}$-filtration $S$.
}\label{fig1}
\end{center}
\end{figure}

We apply the $0$th and $1$st homology functors to $S$ and obtain 
\[H_0(S;k) \cong k_{B_{2,2}} \oplus k_{\langle 1,1\rangle } \oplus k_{\langle 2',1'\rangle } \oplus  k_{\langle 1',1\rangle } \text{ and } \ H_1(S;k) \cong  k_{\langle 2,+\infty\rangle }, \]
and their persistence diagrams
\[\mathscr{B}(H_0(S;k)) =\{ B_{2,2}, \langle 1,1\rangle , \langle 2',1'\rangle, \langle 1',1\rangle  \} \text{ and } \mathscr{B}(H_1(S;k))= \{\langle 2,+\infty\rangle\}.
 \]
See Figure \ref{fig2} for their visualization.

\begin{figure}[htbp]
\begin{center}
\includegraphics[width=60mm]{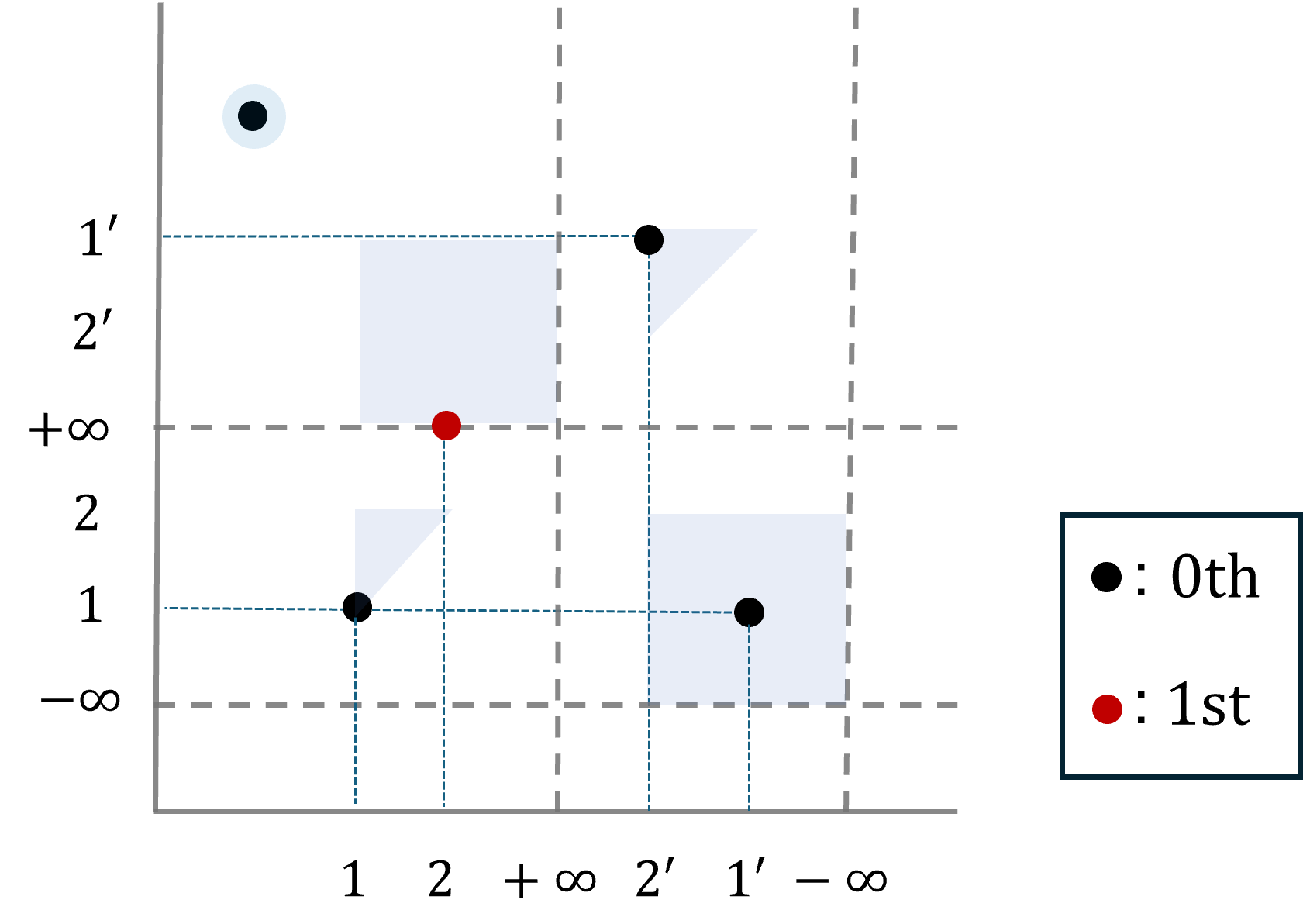}
\caption{A visualization of the $0$th and $1$st biapth persistence diagrams of the $B_{2,2}$-filtration $S$.}
\label{fig2}
\end{center}
\end{figure}

\end{example}


\subsection{The bipath poset}\label{subsec:TheBipathPoset}
In this subsection, we introduce a continuous version of finite bipath posets (Definition \ref{def:bipathposet}). In addition, we introduce bipath functions (Definition \ref{def:bipathfunction}) which induce bipath persistent homology. 

\begin{definition}\label{def:bipathposet}
The \emph{bipath poset} is a poset whose underlying set is $(\mR \times \{1,2\}) \sqcup \{\pm \infty \}$ and its order is given by  
\[x \leq y \colon \Longleftrightarrow \begin{cases}
        x = -\infty, \text{or} \\
        y = + \infty, \text{or} \\
        x=(s,i) \leq (t,i)=y \text{ in } \mR\times \{i\} \text{ with } s \leq t \text{ and $i\in \{1,2\}.$}  
    \end{cases}
    \] 
We denote by $B$ the bipath poset.
 \end{definition}
The bipath poset $B$ is displayed as follows.
\begin{equation*}
\begin{tikzpicture}

    \coordinate (b) at (2,0);
    \coordinate (minf) at (1,-0.5); 
     \coordinate (maxf) at (9,-0.5); 
    \coordinate (d) at ($(8,0)$);
    \draw[line width = 0.3mm] ($(b)$)--($(d)$); 
        \draw[line width = 0.3mm] ($(b)-(0,1)$)--($(d) -(0,1)$); 
   \node at ($0.5*(b)+0.5*(d)$) [above]{$\mathbb{R}\times \{1\}$}; 
      \node at ($0.5*(b)+0.5*(d)-(0,1)$) [above]{$\mathbb{R}\times \{2\}$}; 
  \node at ($(minf)$) [left]{$-\infty$}; 
  \node at ($(minf)-(1,0)$) [left]{$B \colon$}; 
   \node at ($(minf)$) [right]{$\bullet$}; 
  \node at ($(maxf)$) [right]{$+\infty$.}; 
   \node at ($(maxf)$) [left]{$\bullet$}; 
\end{tikzpicture}    
\end{equation*}

We call $B$-persistence modules \emph{bipath persistence modules}. Any pfd bipath persistence module admits interval-decomposability similar to the pfd persistence modules over finite bipath posets.

\begin{theorem}\label{thm:decomp}
  Any pfd bipath persistence module is uniquely decomposed into a direct sum of interval modules up to isomorphism.
\end{theorem}

One can prove Theorem \ref{thm:decomp} using a similar argument given in \cite[Section 4]{aoki2024bipathpersistence}, which shows the interval-decomposability of pfd persistence modules over finite bipath posets. While its proof is given in the setting of finite bipath posets, the idea of approach (the so-called matrix problems methods) almost equally applies to our setting. We leave the proof to the reader.

 Thanks to the interval-decomposability of pfd bipath persistence modules $V$, we can regard $\mathscr{B}(V)$ as a multiset of intervals. We also call $\mathscr{B}(V)$ the \emph{bipath persistence diagram} of $V$. 

We next describe intervals of $B$.
First, we divide $\mathbb{I}(B)$ into the following five pairwise disjoint sets of intervals similar to the case of finite bipath posets as follows:
\begin{itemize}
     \item $\UU\coloneq \{ I \in \mathbb{I}(B) \mid I \subseteq \mR\times \{1\} \}$.
     \item $\DD \coloneq \{ I \in \mathbb{I}(B) \mid I \subseteq \mR\times \{2\} \}$.
    \item $\BB\coloneq  \{B\}$.
    \item $\LL \coloneq  \{ I \in \mathbb{I}(B) \setminus \BB \mid   -\infty \in I  \}$. 
    \item $\RR\coloneq \{ I \in \mathbb{I}(B) \setminus \BB \mid +\infty \in I  \}$.
\end{itemize}
 Let $\TT := \{\UU,\DD,\BB,\LL,\RR \}$. Every interval of $B$ belongs to $\XX$ for some $\XX\in \TT$. 
 We say that intervals $I$ and $J$ are the \emph{same type} if $I$ and $J$ are in common  $\XX$ for $\XX \in \TT$. Otherwise, we say that $I$ and $J$ are not the same type of intervals. 

  We next describe intervals by using decorated numbers.
 Let $\omR:= \mR \sqcup \{\pm \infty\}$. 
 \emph{Decorated numbers} are elements of the following set.
\begin{equation}
\overline{ \mathbb{R}}^* \coloneqq \{r^+ \mid r \in \overline{ \mathbb{R}} \} \sqcup \{r^- \mid r \in \ \overline{ \mathbb{R}} \}.
\end{equation}\label{eq:decorated-R}  
The set $\overline{ \mathbb{R}}^*$ is equipped with a total order $\leq_*$ defined as follows: for any $s,t \in \omR$,
\[
s^\sigma \leq_* t^\tau \text{ if either $s<t$, or $s=t$ with  $(\sigma,\tau) \in  \{ (- , -) , (-,+), (+,+) \} $.}
\]
Then, any interval of $\omR$ is described by a pair of decorated numbers $s^\sigma$ and $t^\tau$
as follows: \begin{equation}
( s^\sigma,t^\tau ) \coloneqq
    \begin{cases} 
        [s,t] & \text{ if $(\sigma,\tau)=(-,+)$}.\\
         [s,t) & \text{ if $(\sigma,\tau)=(-,-)$}.\\
         (s,t] & \text{ if $(\sigma,\tau)=(+,+)$}.\\
          (s,t) & \text{ if $(\sigma,\tau)=(+,-)$}.
    \end{cases}
\end{equation} 
We also define $\mR^* := \omR^*\setminus \{ - \infty^\pm, +\infty^\pm \}$. Using the above notation, we can describe $\UU,\DD,\LL$, and $\RR$ as follows.
\begin{itemize}
\item $\UU = \{ ( s^\sigma, t^\tau ) \times \{1\} \mid s^\sigma<_* t^\tau\in  \overline{\mathbb{R}}^* \setminus \{-\infty^- , +\infty^+\} \}$.
\item $\DD=\{  ( t^\tau, s^\sigma) \times \{2\} \mid t^\tau<_* s^\sigma\in  \overline{\mathbb{R}}^* \setminus \{-\infty^- , +\infty^+\}  \}$.
\item $\LL =\{ [-\infty,t^\tau ) \times\{1\} \cup [-\infty,s^\sigma ) \times \{2\} \mid  s^\sigma, t^\tau\in  \overline{\mathbb{R}}^* \setminus \{+ \infty^+\}$.
    \item $\RR= \{(s^\sigma,+\infty ] \times\{1\} \cup (t^\tau, +\infty ] \times \{2\}\mid  s^\sigma, t^\tau\in  \overline{\mathbb{R}}^* \setminus \{- \infty^-\} \}$.
\end{itemize}
Each interval $I\in \XX$ with $\XX \in \TT \setminus \{\BB\}$  is given by one of the above forms using $s^\sigma$ and $t^\tau$, and we denote it by $\langle s^\sigma, t^\tau \rangle_\XX$.

For our purpose, we decompose $\LL$ into ${\LL_i}$'s as follows: 
\begin{enumerate}
    \item $\LL_1 \coloneq \{  \langle -\infty^+ ,-\infty^+ \rangle_\LL \} =  \{-\infty\}$.
    \item $\LL_2 \coloneq \{ \langle -\infty^+ ,t^\tau \rangle_\LL  \in \mathbb{I}(B) \mid t^\tau\in \mR^* \cup \{ +\infty^- \} \} \cup \{ \langle s^\sigma, -\infty^+ \rangle_\LL  \in \mathbb{I}(B) \mid  s^\sigma \in \mR^* \cup \{ +\infty^- \} \}$.
    \item $\LL_3 \coloneq \{ \langle s^\sigma,t^\tau \rangle_\LL  \in \mathbb{I}(B)  \mid  s^\sigma,t^\tau \in \mR^*\}$.
    \item $\LL_4 \coloneq \{ \langle + \infty^- ,t^\tau \rangle_\LL  \in \mathbb{I}(B) \mid  t^\tau \in \mR^* \} \cup \{ \langle s^\sigma,+ \infty^-  \rangle_\LL  \in \mathbb{I}(B) \mid  s^\sigma  \in \mR^* \}$.
    \item $\LL_5 \coloneq \{  \langle +\infty^- , +\infty^- \rangle_\LL \} = \{ B \setminus \{ +\infty\}  \}$.
\end{enumerate}
As for $\RR$, we have the following. 
\begin{enumerate}
    \item $\RR_1 \coloneq \{  \langle +\infty^- ,+\infty^- \rangle_\RR \} =  \{+\infty\}$.
    \item $\RR_2 \coloneq \{ \langle +\infty^- ,t^\tau \rangle_\RR  \in \mathbb{I}(B) \mid t^\tau\in \mR^* \cup \{ -\infty^+ \} \} \cup \{ \langle s^\sigma, +\infty^- \rangle_\RR  \in \mathbb{I}(B) \mid  s^\sigma \in \mR^* \cup \{ -\infty^+ \} \}$.
    \item $\RR_3 \coloneq \{ \langle s^\sigma,t^\tau \rangle_\RR  \in \mathbb{I}(B)  \mid  s^\sigma,t^\tau \in \mR^*\}$.
    \item $\RR_4 \coloneq \{ \langle - \infty^+ ,t^\tau \rangle_\RR  \in \mathbb{I}(B) \mid  t^\tau \in \mR^* \} \cup \{ \langle s^\sigma, -\infty^+  \rangle_\RR  \in \mathbb{I}(B) \mid  s^\sigma  \in \mR^* \}$.
    \item $\RR_5 \coloneq \{  \langle -\infty^+ , -\infty^+ \rangle_\RR \} = \{ B \setminus \{ -\infty\}  \}$.
\end{enumerate}
See Table \ref{tab:intervalsBLi}.
\begin{table}[h]
\renewcommand{\arraystretch}{1.5}
    \begin{tabular}{cccccccc}
        $\LL_1$ & \hspace{-2mm}$\LL_2$ & \hspace{-2mm}$\LL_3$ & \hspace{-2mm} $\LL_4$& \hspace{-2mm}$\LL_5$  \\ 
        \begin{tikzpicture}[baseline = 0mm, scale = 0.7]
        \coordinate (x) at (0.4,0);
        \coordinate (y) at (0,0.6); 
        \coordinate (s) at ($3*(x) + -1*(y)$);
        \coordinate (0) at ($0*(x) + 0*(y)$);
        \coordinate (1) at ($1*(x) + 1*(y) - 0.1*(x)$);
        \coordinate (n) at ($5*(x) + 1*(y)$);
        \coordinate (11) at ($6*(x) + 0*(y) - 0.1*(x)$);
        \coordinate (1') at ($1*(x) + -1*(y) - 0.1*(x)$);
        \coordinate (m') at ($5*(x) + -1*(y)$);
        \draw (0)--(1)--(n)--(11)--(m')--(1')--cycle;
        \node[left] at (0) {\footnotesize$-\infty$};
        \node[right] at (11) {\footnotesize$+\infty$}; 
        \node[above] at (1) {\footnotesize$ $}; 
        \node[above] at (n) {\footnotesize$ $}; 
        \node[below] at (1') {\footnotesize$ $}; 
        \node[below] at (m') {\footnotesize$ $};
            \fill (0)circle(0.7mm)node[below]{$ $};
            \fill
            (s)circle(0mm)node[below]{$\langle -\infty^+, -\infty^+ \rangle_\LL$};
        \end{tikzpicture}

        & \hspace{-4mm}
        \begin{tikzpicture}[baseline = 0mm, scale =0.7]
        \coordinate (x) at (0.4,0);
        \coordinate (y) at (0,0.6); 
         \coordinate (s) at ($3*(x) + -1*(y)$);
        \coordinate (0) at ($0*(x) + 0*(y)$);
        \coordinate (1) at ($1*(x) + 1*(y) - 0.1*(x)$);
        \coordinate (n) at ($5*(x) + 1*(y)$);
        \coordinate (11) at ($6*(x) + 0*(y) - 0.1*(x)$);
        \coordinate (1') at ($1*(x) + -1*(y) - 0.1*(x)$);
        \coordinate (m') at ($5*(x) + -1*(y)$);
        \draw (0)--(1)--(n)--(11)--(m')--(1')--cycle;
        \coordinate (s) at ($3*(x) + -1*(y)$);
        \coordinate (t) at ($3*(x) + 1*(y)$);
        \fill (0)circle(0.5mm)node[below]{$ $};
        \fill (t)circle(0.5mm)node[above]{$t^\tau$};
        \draw[line width = 0.7mm] (0)--(1)--(t);
        \node[left] at (0) {\footnotesize$-\infty$};
        \node[right] at (11) {\footnotesize$+\infty$}; 
        \node[above] at (1) {\footnotesize$ $}; 
        \node[above] at (n) {\footnotesize$ $}; 
        \node[below] at (1') {\footnotesize$ $};
        \node[below] at (m') {\footnotesize$ $};
            \fill
            (s)circle(0mm)node[below]{$\langle -\infty^+, t^\tau \rangle_\LL$};
        \end{tikzpicture} 
        & \hspace{-4mm}

           \begin{tikzpicture}[baseline = 0mm, scale =0.7]
        \coordinate (x) at (0.4,0);
        \coordinate (y) at (0,0.6); 
        \coordinate (0) at ($0*(x) + 0*(y)$);
        \coordinate (1) at ($1*(x) + 1*(y) - 0.1*(x)$);
        \coordinate (n) at ($5*(x) + 1*(y)$);
        \coordinate (11) at ($6*(x) + 0*(y) - 0.1*(x)$);
        \coordinate (1') at ($1*(x) + -1*(y) - 0.1*(x)$);
        \coordinate (m') at ($5*(x) + -1*(y)$);
        \draw (0)--(1)--(n)--(11)--(m')--(1')--cycle;
        \coordinate (s) at ($3*(x) + -1*(y)$);
        \coordinate (t) at ($3*(x) + 1*(y)$);
        \fill (s)circle(0.5mm)node[above]{$s^\sigma$};
        \fill
        (s)circle(0mm)node[below]{$\langle s^\sigma, t^\tau \rangle_\LL$};
        \fill (t)circle(0.5mm)node[above]{$t^\tau$};
        \draw[line width = 0.7mm] (s)--(1')--(0)--(1)--(t);
        \node[left] at (0) {\footnotesize$-\infty$};
        \node[right] at (11) {\footnotesize$+\infty$}; 
        \node[above] at (1) {\footnotesize$ $}; 
        \node[above] at (n) {\footnotesize$ $}; 
        \node[below] at (1') {\footnotesize$ $};
        \node[below] at (m') {\footnotesize$ $};
        \end{tikzpicture}

        & \hspace{-4mm}
       \begin{tikzpicture}[baseline = 0mm, scale =0.7]
        \coordinate (x) at (0.4,0);
        \coordinate (y) at (0,0.6); 
        \coordinate (0) at ($0*(x) + 0*(y)$);
        \coordinate (1) at ($1*(x) + 1*(y) - 0.1*(x)$);
        \coordinate (n) at ($5*(x) + 1*(y)$);
        \coordinate (11) at ($6*(x) + 0*(y) - 0.1*(x)$);
        \coordinate (11-up) at ($6*(x) + 0.15*(y) - 0.25*(x)$);
        \coordinate (1') at ($1*(x) + -1*(y) - 0.1*(x)$);
        \coordinate (m') at ($5*(x) + -1*(y)$);
        \draw (0)--(1)--(n)--(11)--(m')--(1')--cycle;
        \coordinate (s) at ($3*(x) + -1*(y)$);
        \coordinate (t) at ($3*(x) + 1*(y)$);
        \fill (s)circle(0.5mm)node[above]{$s^\sigma$};
        \fill (11-up)circle(0.5mm)node[above right]{$ $};
        \draw[line width = 0.7mm] (s)--(1')--(0)--(1)--(t)--(n)--(11-up);
        \node[left] at (0) {\footnotesize$-\infty$};
        \node[right] at (11) {\footnotesize$+\infty$}; 
        \node[above] at (1) {\footnotesize$ $}; 
        \node[above] at (n) {\footnotesize$ $}; 
        \node[below] at (1') {\footnotesize$ $};
        \node[below] at (m') {\footnotesize$ $};
                \fill
            (s)circle(0mm)node[below]{$\langle s^\sigma, +\infty^- \rangle_\LL$};
        \end{tikzpicture}  
        & 
        
        \hspace{-4mm}
            \begin{tikzpicture}[baseline = 0mm, scale =0.7]
        \coordinate (x) at (0.4,0);
        \coordinate (y) at (0,0.6); 
        \coordinate (0) at ($0*(x) + 0*(y)$);
        \coordinate (1) at ($1*(x) + 1*(y) - 0.1*(x)$);
        \coordinate (n) at ($5*(x) + 1*(y)$);
        \coordinate (11) at ($6*(x) + 0*(y) - 0.1*(x)$);
        \coordinate (11-up) at ($6*(x) + 0.15*(y) - 0.25*(x)$);
        \coordinate (11-down) at ($6*(x) - 0.15*(y) - 0.25*(x)$);
        \coordinate (1') at ($1*(x) + -1*(y) - 0.1*(x)$);
        \coordinate (m') at ($5*(x) + -1*(y)$);
        \draw (0)--(1)--(n)--(11)--(m')--(1')--cycle;
        \coordinate (s) at ($3*(x) + -1*(y)$);
        \coordinate (t) at ($3*(x) + 1*(y)$);
        \fill (11-up)circle(0.5mm)node[above right]{$ $};
        \draw[line width = 0.7mm] (11-down)--(m')--(1')--(0)--(1)--(t)--(n)--(11-up);
        \node[left] at (0) {\footnotesize$-\infty$};
        \node[right] at (11) {\footnotesize$+\infty$}; 
        \node[above] at (1) {\footnotesize$ $}; 
        \node[above] at (n) {\footnotesize$ $}; 
        \node[below] at (1') {\footnotesize$ $};
        \node[below] at (m') {\footnotesize$ $};
              \fill
            (s)circle(0mm)node[below]{$\langle +\infty^-, +\infty^- \rangle_\LL$};
        \end{tikzpicture}       
        \\      
                $\RR_1$ & \hspace{-2mm}$\RR_2$ & \hspace{-2mm}$\RR_3$ & \hspace{-2mm} $\RR_4$& \hspace{-2mm}$\RR_5$
                \\
         \begin{tikzpicture}[baseline = 0mm, scale = 0.7]
        \coordinate (x) at (0.4,0);
        \coordinate (y) at (0,0.6); 
        \coordinate (0) at ($0*(x) + 0*(y)$);
        \coordinate (1) at ($1*(x) + 1*(y) - 0.1*(x)$);
        \coordinate (n) at ($5*(x) + 1*(y)$);
        \coordinate (11) at ($6*(x) + 0*(y) - 0.1*(x)$);
        \coordinate (1') at ($1*(x) + -1*(y) - 0.1*(x)$);
        \coordinate (m') at ($5*(x) + -1*(y)$);
        \draw (0)--(1)--(n)--(11)--(m')--(1')--cycle;
        \node[left] at (0) {\footnotesize$-\infty$};
        \node[right] at (11) {\footnotesize$+\infty$}; 
        \node[above] at (1) {\footnotesize$ $}; 
        \node[above] at (n) {\footnotesize$ $}; 
        \node[below] at (1') {\footnotesize$ $}; 
        \node[below] at (m') {\footnotesize$ $};
            \fill (11)circle(0.7mm)node[below]{$ $};
                        \fill
            (s)circle(0mm)node[below]{$\langle +\infty^-, +\infty^- \rangle_\RR$};
        \end{tikzpicture}           
          & 
          \hspace{-4mm} 
               \begin{tikzpicture}[baseline = 0mm, scale =0.7]
        \coordinate (x) at (0.4,0);
        \coordinate (y) at (0,0.6); 
        \coordinate (0) at ($0*(x) + 0*(y)$);
        \coordinate (1) at ($1*(x) + 1*(y) - 0.1*(x)$);
        \coordinate (n) at ($5*(x) + 1*(y)$);
        \coordinate (11) at ($6*(x) + 0*(y) - 0.1*(x)$);
        \coordinate (0_down) ($0.2*(x) - 0.5*(y)$);
        \coordinate (1') at ($1*(x) + -1*(y) - 0.1*(x)$);
        \coordinate (m') at ($5*(x) + -1*(y)$);
        \draw (0)--(1)--(n)--(11)--(m')--(1')--cycle;
        \coordinate (s) at ($3*(x) + -1*(y)$);
        \coordinate (t) at ($3*(x) + 1*(y)$);
         \fill (11)circle(0.5mm)node[below]{$ $};
        \draw[line width = 0.7mm] (s)--(m')--(11);
        \node[left] at (0) {\footnotesize$-\infty$};
        \node[right] at (11) {\footnotesize$+\infty$}; 
        \node[above] at (1) {\footnotesize$ $}; 
        \node[above] at (n) {\footnotesize$ $}; 
        \node[below] at (1') {\footnotesize$ $};
        \node[below] at (m') {\footnotesize$ $};
         \fill (s)circle(0.5mm)node[above]{$t^\tau$};
             \fill
            (s)circle(0mm)node[below]{$\langle +\infty^-, t^\tau \rangle_\RR$};
        \end{tikzpicture}  
        & 
        \hspace{-4mm} 
          \begin{tikzpicture}[baseline = 0mm, scale =0.7]
        \coordinate (x) at (0.4,0);
        \coordinate (y) at (0,0.6); 
        \coordinate (0) at ($0*(x) + 0*(y)$);
        \coordinate (1) at ($1*(x) + 1*(y) - 0.1*(x)$);
        \coordinate (n) at ($5*(x) + 1*(y)$);
        \coordinate (11) at ($6*(x) + 0*(y) - 0.1*(x)$);
        \coordinate (0_down) ($0.2*(x) - 0.5*(y)$);
        \coordinate (1') at ($1*(x) + -1*(y) - 0.1*(x)$);
        \coordinate (m') at ($5*(x) + -1*(y)$);
        \draw (0)--(1)--(n)--(11)--(m')--(1')--cycle;
        \coordinate (s) at ($3*(x) + -1*(y)$);
        \coordinate (t) at ($3*(x) + 1*(y)$);
        \draw[line width = 0.7mm] (t)--(n)--(11)--(m')--(s);
        \node[left] at (0) {\footnotesize$-\infty$};
        \node[right] at (11) {\footnotesize$+\infty$}; 
        \node[above] at (1) {\footnotesize$ $}; 
        \node[above] at (n) {\footnotesize$ $}; 
        \node[below] at (1') {\footnotesize$ $};
        \node[below] at (m') {\footnotesize$ $};
         \fill (s)circle(0.5mm)node[above]{$t^\tau$};
        \fill (t)circle(0.5mm)node[above]{$s^\sigma$};
          \fill
        (s)circle(0mm)node[below]{$\langle s^\sigma, t^\tau \rangle_\RR$};
        \end{tikzpicture} 
        &
        \hspace{-4mm} 
                \begin{tikzpicture}[baseline = 0mm, scale =0.7]
        \coordinate (x) at (0.4,0);
        \coordinate (y) at (0,0.6); 
        \coordinate (0) at ($0*(x) + 0*(y)$);
        \coordinate (0_down) at ($0.1*(x) - 0.1*(y)$);
        \coordinate (1) at ($1*(x) + 1*(y) - 0.1*(x)$);
        \coordinate (n) at ($5*(x) + 1*(y)$);
        \coordinate (11) at ($6*(x) + 0*(y) - 0.1*(x)$);
        \coordinate (11-up) at ($6*(x) + 0.15*(y) - 0.25*(x)$);
        \coordinate (11-down) at ($6*(x) - 0.15*(y) - 0.25*(x)$);
        \coordinate (1') at ($1*(x) + -1*(y) - 0.1*(x)$);
        \coordinate (m') at ($5*(x) + -1*(y)$);
        \draw (0)--(1)--(n)--(11)--(m')--(1')--cycle;
        \coordinate (s) at ($3*(x) + -1*(y)$);
        \coordinate (t) at ($3*(x) + 1*(y)$);
        \fill (t)circle(0.5mm)node[above]{$ $};
        \draw[line width = 0.7mm] (0_down)--(1')--(m')--(11)--(n)--(t);
        \node[left] at (0) {\footnotesize$-\infty$};
        \node[right] at (11) {\footnotesize$+\infty$}; 
        \node[above] at (t) {\footnotesize$ s^\sigma$}; 
        \node[above] at (n) {\footnotesize$ $}; 
        \node[below] at (1') {\footnotesize$ $};
        \node[below] at (m') {\footnotesize$ $};
          \fill
        (s)circle(0mm)node[below]{$\langle s^\sigma, -\infty^+ \rangle_\RR$};
        \end{tikzpicture} 
        &
        \hspace{-4mm}  
             \begin{tikzpicture}[baseline = 0mm, scale =0.7]
        \coordinate (x) at (0.4,0);
        \coordinate (y) at (0,0.6); 
        \coordinate (0) at ($0*(x) + 0*(y)$);
        \coordinate (0_down) at ($0.1*(x) - 0.1*(y)$);
        \coordinate (0_up) at ($0.1*(x) + 0.1*(y)$);
        \coordinate (1) at ($1*(x) + 1*(y) - 0.1*(x)$);
        \coordinate (n) at ($5*(x) + 1*(y)$);
        \coordinate (11) at ($6*(x) + 0*(y) - 0.1*(x)$);
        \coordinate (11-up) at ($6*(x) + 0.15*(y) - 0.25*(x)$);
        \coordinate (11-down) at ($6*(x) - 0.15*(y) - 0.25*(x)$);
        \coordinate (1') at ($1*(x) + -1*(y) - 0.1*(x)$);
        \coordinate (m') at ($5*(x) + -1*(y)$);
        \draw (0)--(1)--(n)--(11)--(m')--(1')--cycle;
        \coordinate (s) at ($3*(x) + -1*(y)$);
        \coordinate (t) at ($3*(x) + 1*(y)$);
        \fill (11-up)circle(0.5mm)node[above right]{$ $};
          \fill
        (s)circle(0mm)node[below]{$\langle -\infty^+, -\infty^+ \rangle_\RR$};
        \draw[line width = 0.7mm] (0_down)--(1')--(m')--(11)--(n)--(1)--(0_up);
        \node[left] at (0) {\footnotesize$-\infty$};
        \node[right] at (11) {\footnotesize$+\infty$}; 
        \node[above] at (1) {\footnotesize$ $}; 
        \node[above] at (n) {\footnotesize$ $}; 
        \node[below] at (1') {\footnotesize$ $};
        \node[below] at (m') {\footnotesize$ $};
        \end{tikzpicture}   
    \end{tabular}
    \caption{ Intervals in $\LL_i$ and  $\RR_i$ ($i=1,2,3,4,5$). We represent each interval with thick lines. We omit writing a picture of intervals $\langle s^\sigma , -\infty^+ \rangle_\LL \in \LL_2$ and  $\langle +\infty^- , t^\tau \rangle_\LL \in \LL_4$; and $\langle s^\sigma , +\infty^- \rangle_\RR \in \RR_2$ and  $\langle -\infty^+ , t^\tau \rangle_\RR \in \RR_4$.
    }
    \label{tab:intervalsBLi}
\end{table}

One can consider the bipath persistent homology of a $B$-filtration as analogue of the case of finite bipath posets. Next, we introduce bipath functions, which can induce bipath persistent homology.

\begin{definition}\label{def:bipathfunction}
Let $X$ be a topological space.
A \emph{bipath function} on $X$ is a pair of $B$-valued maps $(f_1,f_2)$ on $X$ such that  $f_i\colon X \to (\mR \times \{i\}) \sqcup \{\pm \infty \} \subseteq B$ and  $f_1^{-1}(\{-\infty\}) = f_2^{-1}(\{-\infty\})$. 
\end{definition}

We set $| \cdot, \cdot|_B \colon B\times B \to \mR_{\geq0} \sqcup\{\infty\}$  by
\begin{equation*}
    |b_1 , b_2|_B \coloneq
    \begin{cases}
        0 & \text{ if } b_1 =b_2,\\ 
        |r_1-r_2| & \text{ if } b_1 = (r_1,i), b_2 =(r_2,i) \in \mR \times \{i\} \text{ for $i=1,2$, }\\
        \infty & \text{ else.}
    \end{cases}
\end{equation*}
Using this notation, for two bipath functions $f=(f_1,f_2)$ and $g=(g_1,g_2)$ on a topological space $X$, we define 
\begin{equation*}
  ||f,g||_{B} := \max \{  \sup_{x\in X}  |f_1(x) ,g_1(x)|_B , \sup_{x\in X}  |f_2(x) ,g_2(x)|_B\} .   
\end{equation*}

Next, we define a sublevelset bipath filtration of a bipath function.

\begin{definition}\label{def:bipath_function}
Let $X$ be a topological space. For a bipath function on $X$
$f=(f_1,f_2)$, 
we have a $B$-filtration $(f\leq \cdot)\colon B \to \Top$ defined by
     \begin{equation*}
       (f\leq b) \coloneqq
        \begin{cases}
         f_1^{-1}(\{- \infty\}) & \text{if } b= - \infty,
         \\
         X & \text{if } b= + \infty,
         \\
        \{x \in X \ | \  f_i(x) \leq r \} & \text{if $b =(r,i) \in \mathbb{R}\times \{i\}$} \text{ for $i=1,2$}, 
        \end{cases}
\end{equation*}
for each $b \in B$. In fact $(f\leq b_1) \subseteq (f\leq b_2)$ holds whenever $b_1 \leq b_2$. We call it \emph{sublevelset bipath filtration} of $f$. 
\end{definition}

For a bipath function $f=(f_1,f_2)$ on a topological space $X$, the sublevelset bipath filtration $(f\leq \cdot)$ can be regarded as a pair of sublevelset filtration $f_i \colon \omR  \to X $ ($i=1,2$) such that $(f_1\leq - \infty ) = (f_2\leq - \infty )$.

Bipath functions induce bipath persistent homology. Let $X$ be a topological space and $f$ be a bipath function on $X$. We call
\[V(f) \coloneq  H_q( (f\leq \cdot) ; k) \in \Rep_k(B)\]
the \emph{bipath persistent homology} of $f$. We say that a bipath function $f$ is \emph{tame} if $V(f) \in \pfdk(B)$.

\section{Stability theorem of bipath persistent homology}\label{section:stability}
In this section, we prove the stability theorem of bipath persistent homology of bipath functions (Theorem \ref{thm:stability}). We will deduce it by using the isometry theorem of pfd bipath persistence modules (Theorem \ref{thm:isometry}). 

Let $B$ be the bipath poset. 
We define an $\mR_{\geq0}$-action $\Lambda^B$ on $B$ by translations $\Lambda_\epsilon^B \colon B \to B$ given by
\begin{equation}\label{eq:superfamilyB}
        \Lambda_\epsilon^B(b) \coloneqq
        \begin{cases}
        (r+\epsilon,i) & \text{if $b =(r,i) \in \mathbb{R}\times \{i\}$} \text{ for $i=1,2$,}\\
        \pm \infty & \text{if } b= \pm \infty,
        \end{cases}
    \end{equation} 
    for each $b \in B$. Since $\Lambda_\epsilon^B $ is an automorphism for any $\epsilon\geq 0$ by definition, it gives an $\mR$-action on $B$. 

This section aims to prove the following stability result on the bipath persistent homology with respect to bipath functions and the above $\mR$-action $\Lambda^B$ on $B$. This theorem suggests the stability of bipath persistence diagrams of bipath functions.

\begin{theorem}[stability theorem of bipath persistent homology]\label{thm:stability} 
    Let $f=(f_1,f_2)$ and $g=(g_1,g_2)$ be tame bipath functions on a topological space $X$ such that
    \begin{equation}\label{eq:bottom_condition}
    f_1^{-1}(\{-\infty\}) = f_2^{-1}(\{-\infty\}) = g_1^{-1}(\{-\infty\}) =g_2^{-1}(\{-\infty\}).    
    \end{equation}
     Then we have the following inequality.
    \begin{equation*}
        d_{\rm{B}}^{\Lambda^B}(\mathscr{B}(V(f)),\mathscr{B}(V(g)) ) \leq \|f,g \|_{B}.
    \end{equation*}
\end{theorem}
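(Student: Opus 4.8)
The plan is to deduce Theorem~\ref{thm:stability} from the isometry theorem (Theorem~\ref{thm:isometry}), in the same way the classical stability theorem is deduced from algebraic stability. There are two steps: (i) bound the interleaving distance $d_{\rm{I}}^{\Lambda^B}(V(f),V(g))$ by $\|f,g\|_B$ by exhibiting an explicit interleaving between the sublevelset bipath filtrations $(f\leq-)$ and $(g\leq-)$; and (ii) apply the algebraic stability half of Theorem~\ref{thm:isometry} to pass from that interleaving of bipath persistence modules to a matching of bipath persistence diagrams. Since $f$ and $g$ are tame, Theorem~\ref{thm:decomp} ensures that $V(f)$ and $V(g)$ are interval-decomposable, so that $\mathscr{B}(V(f))$ and $\mathscr{B}(V(g))$ are multisets of intervals and the left-hand side of the claimed inequality is defined. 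If $\|f,g\|_B=\infty$ the inequality is trivial, so we may put $\epsilon:=\|f,g\|_B<\infty$.

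For step~(i), I would first prove the set-level inclusion $(f\leq b)\subseteq(g\leq\Lambda_\epsilon^B(b))$ for every $b\in B$, and symmetrically $(g\leq b)\subseteq(f\leq\Lambda_\epsilon^B(b))$. For $b=+\infty$ both sides are $X$. For $b=-\infty$ both sides equal $f_1^{-1}(\{-\infty\})=g_1^{-1}(\{-\infty\})$ by~\eqref{eq:bottom_condition}. For $b=(r,i)\in\mathbb{R}\times\{i\}$ and $x\in X$ with $f_i(x)\leq(r,i)$: if $f_i(x)=-\infty$, then $g_i(x)=-\infty$ by~\eqref{eq:bottom_condition}; otherwise $f_i(x)=(s,i)$ with $s\leq r$, and since $|f_i(x),g_i(x)|_B\leq\epsilon<\infty$ the value $g_i(x)$ must lie in $\mathbb{R}\times\{i\}$, say $g_i(x)=(t,i)$ with $|s-t|\leq\epsilon$, whence $t\leq r+\epsilon$ and $x\in(g\leq\Lambda_\epsilon^B(b))$. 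Because all the maps in sight are subspace inclusions inside $X$, these inclusions are natural in $b$ and assemble into natural transformations $\phi\colon(f\leq-)\Rightarrow(g\leq-)\circ\Lambda_\epsilon^B$ and $\psi\colon(g\leq-)\Rightarrow(f\leq-)\circ\Lambda_\epsilon^B$ of functors $B\to\Top$; moreover, for each $b$ the composite $(f\leq b)\hookrightarrow(g\leq\Lambda_\epsilon^B(b))\hookrightarrow(f\leq\Lambda_{2\epsilon}^B(b))$ is exactly the structure inclusion $(f\leq b)\hookrightarrow(f\leq\Lambda_{2\epsilon}^B(b))$, and likewise with the roles of $f$ and $g$ interchanged.

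Applying the homology functor $H_q(-;k)\colon\Top\to\Vect_k$ (post-composition by a functor carries natural transformations to natural transformations) turns $\phi$ and $\psi$ into morphisms $H_q(\phi)\colon V(f)\to V(g)(\epsilon)$ and $H_q(\psi)\colon V(g)\to V(f)(\epsilon)$; and functoriality of $H_q$, applied to the two composites of inclusions above, gives $H_q(\psi)(\epsilon)\circ H_q(\phi)=V(f)_{0\to 2\epsilon}$ and $H_q(\phi)(\epsilon)\circ H_q(\psi)=V(g)_{0\to 2\epsilon}$. Hence $V(f)$ and $V(g)$ are $\Lambda_\epsilon^B$-interleaved and $d_{\rm{I}}^{\Lambda^B}(V(f),V(g))\leq\epsilon=\|f,g\|_B$. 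Combined with Theorem~\ref{thm:isometry} this yields
\[
d_{\rm{B}}^{\Lambda^B}(\mathscr{B}(V(f)),\mathscr{B}(V(g)))\leq d_{\rm{I}}^{\Lambda^B}(V(f),V(g))\leq\|f,g\|_B,
\]
which is the asserted inequality. The only delicate point is the behaviour at the ends $\pm\infty$ of $B$: a perturbation moving a point across a joint of the bipath poset would already force $\|f,g\|_B=\infty$, and (given $\epsilon<\infty$) it is precisely hypothesis~\eqref{eq:bottom_condition} that controls the sublevelset fibres over $-\infty$, the fibre over $+\infty$ being $X$ for any bipath function. Granting the already-established Theorems~\ref{thm:decomp} and~\ref{thm:isometry}, there is no remaining obstacle: everything reduces to transporting a filtration interleaving through $H_q$.
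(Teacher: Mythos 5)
Your proposal is correct and follows essentially the same route as the paper: reduce to the isometry theorem by constructing an explicit $\Lambda_\epsilon^B$-interleaving between $V(f)$ and $V(g)$ from the sublevelset inclusions, with the hypothesis \eqref{eq:bottom_condition} used precisely (and only) to handle the fibre over $-\infty$. The only difference is that you spell out the verification of the inclusion for $b=(r,i)$, which the paper dismisses as routine.
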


To prove Theorem \ref{thm:stability}, we use 
Theorem \ref{thm:isometry}.

\begin{theorem}[isometry theorem of bipath persistence modules]\label{thm:isometry}
    Let $V$ and $W$ be pfd bipath persistence modules. For  $\epsilon\geq 0$, the following conditions are equivalent.
    \begin{enumerate}
        \item[ {\rm (a)} ] There exists a $\Lambda_\epsilon^{B}$-matching between $\mathscr{B}(V)$ and $\mathscr{B}(W)$.
        \item[ {\rm (b)} ] There exists a bottleneck $\Lambda^B_\epsilon$-interleaving between $V$ and $W$.
        \item[  {\rm (c)} ] There exists a $\Lambda^B_\epsilon$-interleaving between $V$ and $W$.
    \end{enumerate}  Therefore, we have the following equality.
    \begin{equation*}
        d_{\rm{I}}^{\Lambda^B}(V,W) = d_{\rm{BI}}^{\Lambda^B}(V,W) = d_{\rm{B}}^{\Lambda^B}(\mathscr{B}(V),\mathscr{B}(W)).
    \end{equation*}
\end{theorem}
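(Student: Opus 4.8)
\textbf{Proof plan for Theorem~\ref{thm:isometry}.}
The equalities of the three distances follow formally from the equivalence (a) $\Leftrightarrow$ (b) $\Leftrightarrow$ (c) by taking infima over $\epsilon\ge 0$, so it suffices to prove the equivalence for each fixed $\epsilon$. The implication (a) $\Rightarrow$ (b) is Proposition~\ref{lem:inequality} and (b) $\Rightarrow$ (c) is Proposition~\ref{prop:dI<dB}, so the whole content is the ``induced matching'' implication (c) $\Rightarrow$ (a). The plan is to route this through the graph-matching framework of Subsection~\ref{sec:graph}, with the following bipath-specific Key Lemma as the crucial input: \emph{for intervals $I,J$ of $B$ and $\epsilon\ge 0$, the interval modules $k_I$ and $k_J$ are $\Lambda_\epsilon^{B}$-interleaved if and only if $I\subseteq \Ex_\epsilon^{\Lambda^B}(J)$ and $J\subseteq \Ex_\epsilon^{\Lambda^B}(I)$; moreover these containments are already forced by the non-vanishing of $\Hom_B(k_I,k_J(\epsilon))$ and $\Hom_B(k_J,k_I(\epsilon))$.} The ``if'' part is Proposition~\ref{prop:matching-induce-matching}. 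For the remaining directions I would first use Lemma~\ref{lem:well-def} to rewrite the two Hom-spaces as spanned by the morphisms $\phi_C$ attached to connected components of $I\cap \Lambda_{-\epsilon}(J)$, respectively $J\cap \Lambda_{-\epsilon}(I)$, satisfying \eqref{eq:condition}, and then argue on the explicit classification of intervals of $B$: using the partition $\mathbb{I}(B)=\UU\sqcup\DD\sqcup\BB\sqcup\LL\sqcup\RR$ together with the refinements $\LL_i,\RR_i$ of Table~\ref{tab:intervalsBLi}, one checks that a non-zero morphism in both directions forces $I$ and $J$ to lie in a common $\XX\in\TT$ and, within that type, forces the decorated endpoints of $I$ and $J$ to be within $\epsilon$ of one another in the relevant coordinates --- which is exactly the asserted mutual containment in the $\Lambda_\epsilon^{B}$-thickenings.

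Granting the Key Lemma, I would finish as follows. Given a $\Lambda_\epsilon^{B}$-interleaving $(\alpha,\beta)$ between $V=\bigoplus_I k_I^{m_V(I)}$ and $W=\bigoplus_J k_J^{m_W(J)}$, form the bipartite graph $G=(\mathscr{B}(V),\mathscr{B}(W);E_{\mu_\epsilon^{\Lambda^B}})$; by the Key Lemma its edges join only intervals of the same type, so $G$ is the disjoint union of its five type-parts. By Proposition~\ref{thm:matching}, together with Lemma~\ref{lem:finness} supplying the required finiteness of neighbourhoods, it is enough to verify Hall's Conditions {\rm (H)} and {\rm (H$'$)} for $G(\mathscr{B}_{2\epsilon}(V),\mathscr{B}(W))$ and $G(\mathscr{B}(V),\mathscr{B}_{2\epsilon}(W))$, and by the symmetry $(V,\alpha)\leftrightarrow(W,\beta)$ only {\rm (H)} needs proof, checked one type at a time. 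For a finite multiset $\mathscr{A}\subseteq\mathscr{B}_{2\epsilon}(V)$ of a fixed type I would combine the interleaving identity $\beta(\epsilon)\circ\alpha=V_{0\to 2\epsilon}$ with $\sum_N \iota_N(\epsilon)\circ\pi_N(\epsilon)=\id_{W(\epsilon)}$ and a rank computation at a carefully chosen element of $B$ --- using, for the branch-spanning types $\LL,\RR,\BB$, the rigidity $V_{\pm\infty}\cong W_{\pm\infty}$ that comes from $\Lambda_\epsilon^{B}(\pm\infty)=\pm\infty$ --- to conclude $|\mathscr{A}|\le|\mu_\epsilon^{\Lambda^B}(\mathscr{A})|$. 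Proposition~\ref{thm:matching} then yields a bottleneck $\Lambda_\epsilon^{B}$-interleaving, i.e.\ condition (b); feeding its matched pairs through the Key Lemma converts the clause ``$M$ and $\sigma(M)$ are $\Lambda_\epsilon^{B}$-interleaved'' into the thickening conditions of Definition~\ref{def:matching}, producing the $\Lambda_\epsilon^{B}$-matching required for (a). Taking infima over $\epsilon$ gives $d_{\rm I}^{\Lambda^B}=d_{\rm BI}^{\Lambda^B}=d_{\rm B}^{\Lambda^B}$.

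I expect the main obstacle to be the Key Lemma, specifically its case analysis at the glued ends $\pm\infty$, where intervals of types $\LL$, $\RR$ and $\BB$ interact; this is also precisely where the fact that $\Lambda^B_\epsilon$ is an automorphism fixing $\pm\infty$ is what makes an honest isometry (rather than a bounded distortion) possible. An alternative to the Hall argument in the finishing step would be to restrict $V$ and $W$ along the two order-preserving embeddings $\overline{\mR}\to B$, $r\mapsto(r,i)$ for $i=1,2$, apply the classical one-parameter isometry theorem to the resulting $\overline{\mR}$-persistence modules, and reassemble the two $\epsilon$-matchings into a single $\Lambda_\epsilon^{B}$-matching; this requires showing the two branch matchings can be chosen consistently on the $\LL$-, $\RR$- and $\BB$-type pieces, which again rests on the rigidity of the interleaving at $\pm\infty$.
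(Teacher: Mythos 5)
Your high-level skeleton matches the paper's: route (a)$\Rightarrow$(b) and (b)$\Rightarrow$(c) through Propositions~\ref{lem:inequality} and~\ref{prop:dI<dB}, decompose by the five types $\TT$, and prove (c)$\Rightarrow$(b) via the Hall/graph-matching framework of Subsection~\ref{sec:graph} with a rank argument at $\pm\infty$. However, there is a genuine gap at precisely the spot you describe as ``a rank computation at a carefully chosen element of $B$,'' and your ``Key Lemma'' is slightly misstated.

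On the Key Lemma: for $I,J\in\UU$ or $\DD$, the correct equivalence (Lemma~\ref{lem:aaa}(1) in the paper) is $\Lambda_\epsilon$-interleaved $\iff$ (both $\Lambda_{2\epsilon}$-trivial) or (mutual containment in thickenings). Your ``iff'' omits the trivial-pair alternative: two disjoint, short $\UU$-intervals can be $\Lambda_\epsilon$-interleaved via the zero maps without either containment holding. The pure ``iff'' does hold for $\LL,\RR,\BB$ (Lemma~\ref{lem:aaa}(2)), because those intervals are never $\Lambda_{2\epsilon}$-trivial. This is fixable, but you should state it correctly since the type-$\UU/\DD$ case is handled by citing the one-parameter literature (Lemma~\ref{lem:final}(1)) anyway.

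The real gap is in your finishing step for $\LL$ and $\RR$. The rigidity $V^{\LL}_{-\infty}\cong W^{\LL}_{-\infty}$ and an interleaving give you that $\alpha_{-\infty}$ is injective on $\bigoplus_{I\in X}k_I$, but the image of $\alpha_{-\infty}$ a priori spreads across \emph{all} of $W^{\LL}_{-\infty}$, not only $\bigoplus_{J\in\mu_\epsilon^\Lambda(X)}k_J(\epsilon)_{-\infty}$; so rank alone does not bound $|X|$ by $|\mu_\epsilon^\Lambda(X)|$. What makes the Hall condition actually provable is a block upper-triangular structure: the paper introduces a preorder $\leq_\ell$ on $\LL$ (built from the decorated-endpoint parametrisation and the stratification $\LL_1,\dots,\LL_5$), orders $X=\{I_1\leq_\ell\cdots\leq_\ell I_n\}$, and then proves the key technical Lemma~\ref{lem:interleaved-RST}: for $R\leq_\ell T$ in $\LL$, nonzero morphisms $k_R\to k_S(\epsilon)$ and $k_S\to k_T(\epsilon)$ force $k_S$ to be $\Lambda_\epsilon$-interleaved with $k_R$ or $k_T$. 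This is used to show that the components of $\beta(\epsilon)\circ\alpha$ through any $J\notin\mu_\epsilon^\Lambda(X)$ vanish on the ordered block (equation~\eqref{eq:makezero}), which yields a triangular matrix with invertible diagonal at $-\infty$ and hence the Hall inequality (Proposition~\ref{prop:hall}). Your plan does not identify this triangularisation, which is the crux of the whole $\LL/\RR$ case; the proof of Lemma~\ref{lem:interleaved-RST} itself is the delicate case-analysis on decorated numbers that you correctly anticipate as the obstacle at the glued ends, but which your plan leaves entirely open. Your proposed alternative (restrict to the two $\overline{\mR}$-branches and reassemble the two one-parameter matchings) is not pursued in the paper, and you rightly flag that the consistency of the two branch matchings on $\LL,\RR,\BB$ is unproven; as stated this alternative is not a proof either.
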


We call the implication (c) $\Rightarrow$ (b) the algebraic stability theorem of bipath persistence modules.

Once we admit Theorem \ref{thm:isometry}, we can prove Theorem \ref{thm:stability} in the following way. The proof is similar to a proof of the stability theorem of standard persistent homology. 

\begin{proof}[Proof of Theorem ~\ref{thm:stability}]
   Let $f$ and $g$ be tame bipath functions. If $\|f,g \|_{B} = + \infty$, then we have $d_{\rm{B}}^{\Lambda^B}(V(f),V(g)) \leq +\infty = \|f,g \|_{B}$. 
   
   Next, we assume $ \epsilon \coloneqq \|f,g \|_{B} < +\infty$. Then, for any $b \in B$, we have 
\begin{equation}\label{eq:inclu}
       (f\leq b) \subseteq (g \leq \Lambda^B_{\epsilon}(b) ) \text{ and } (g\leq b) \subseteq (f \leq \Lambda^B_{\epsilon}(b) ).
   \end{equation}
  Indeed, if $b\neq - \infty$, then we can easily check \eqref{eq:inclu}.
    If $b = -\infty$, then by the assumption of bipath functions \eqref{eq:bottom_condition}, we have $(f \leq -\infty) = (g \leq -\infty)$. In addition, since $-\infty$ is a fixed point of $\Lambda^B_\epsilon$, we obtain \eqref{eq:inclu}.

Thus, we have the following diagram of topological spaces:
\begin{equation*}
\xymatrix{
(f\leq b) \ar@{->}[r] \ar@{->}[rd] 
& (f \leq \Lambda^B_{\epsilon}(b) ) \ar@{->}[r] \ar@{->}[rd] 
& (f \leq \Lambda^B_{2\epsilon}(b) )   \\
(g\leq b) \ar@{->}[r] \ar@{->}[ru] 
& (g \leq \Lambda^B_{\epsilon}(b) ) \ar@{->}[r] \ar@{->}[ru] 
& (g \leq \Lambda^B_{2\epsilon}(b) ).  
}    
\end{equation*}
By applying the $q$th homology functor to the above diagram, we obtain the following diagram of vector spaces for each $b\in B$:
\begin{equation*}
\xymatrix{
V(f)_b \ar@{->}[r] \ar@{->}[rd] 
& V(f)_{ \Lambda^B_{\epsilon}(b) } \ar@{->}[r] \ar@{->}[rd] 
& V(f)_{\Lambda^B_{2\epsilon}(b) }   \\
V(g)_b \ar@{->}[r] \ar@{->}[ru] 
& V(g)_{ \Lambda^B_{\epsilon}(b) } \ar@{->}[r] \ar@{->}[ru] 
& V(g)_{ \Lambda^B_{2\epsilon}(b) }.  
}    
\end{equation*}
Then, they give rise to morphisms
\begin{eqnarray*}
 \alpha \colon  V(f)\to V(g)(\epsilon) \text{ and } 
 \beta  \colon  V(g) \to V(f)(\epsilon)   
\end{eqnarray*}
such that  $\beta(\epsilon)\circ \alpha = V(f)_{0\to2\epsilon}$ and $\alpha(\epsilon)\circ \beta = V(g)_{0\to2\epsilon}$. Thus, the pair $\alpha$ and $\beta$ is a $\Lambda^B_\epsilon$-interleaving between $V(f)$ and $V(g)$. Therefore, we have $d_{\rm{I}}^{\Lambda^B}(V(f), V(g)) \leq  \epsilon$.  By combining this with Theorem~\ref{thm:isometry}, we obtain the desired inequality
 \[d_{\rm{B}}^{\Lambda^B} (\mathscr{B}(V(f)), \mathscr{B}(V(g))) = d_{\rm{I}}^{\Lambda^B}(V(f), V(g)) \leq  \epsilon = \|f,g\|_{B}.\]
\end{proof}

In the rest of this paper, we prove Theorem \ref{thm:isometry}. 
We have the implication (a) $\Rightarrow$ (b) and (b) $\Rightarrow$ (c) in the statement of Theorem \ref{thm:isometry} by Propositions \ref{lem:inequality} and \ref{prop:dI<dB}. Below, we show their converses. For simplicity, we write $\Lambda$ instead of the $\mR$-action $\Lambda^B$ on $B$ throughout the following subsections.

\subsection{Interleaving and partial matching for bipath persistence modules}\label{subsec:mor_interval} 
 
In this subsection, we give basic properties of interleavings and partial matching and prove the implication (b) $\Rightarrow$ (a) in the statements of Theorem \ref{thm:isometry}. 

Recall that for the division $\mathbb{I}(B) =  \UU \sqcup \DD  \sqcup \BB \sqcup \LL \sqcup  \RR$, we write $\TT$ for the set $\{\UU,\DD,\BB,\LL,\RR \}$ (Section \ref{sec:bipath}). 
 For any pfd bipath persistence module $V$, we decompose $V \cong \bigoplus_{\XX \in \TT} V^{\XX}$, where $V^{\XX}$ is a direct sum of intervals in $\mathscr{B}(V)$ belonging to $\XX$ (Such decomposition is guaranteed by Theorem~\ref{thm:decomp}). Notice that $V^\XX$ is uniquely determined up isomorphism for each $\XX \in \TT$.

\begin{lemma}\label{lem:type_shiftfunctor}
   For any interval $I$ of $B$  and $\epsilon \geq 0$,  the intervals $I$ and  $\Lambda_{-\epsilon}(I)$  are the same type of intervals. 
    Therefore, for any pfd bipath persistence module $V= \bigoplus_{\XX \in \TT} V^{\XX}$, we have $V^\XX(\epsilon) = V(\epsilon)^\XX$ for each $\XX \in \TT$.
\end{lemma}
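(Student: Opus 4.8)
The plan is to check directly that the poset automorphism $\Lambda_{-\epsilon}$ of $B$ preserves each of the five classes $\UU,\DD,\BB,\LL,\RR$ of intervals, and then to transfer this to persistence modules via the identity $\Lambda_\epsilon^*(k_I)=k_{\Lambda_{-\epsilon}(I)}$ recorded in Lemma~\ref{lem:support_interval}, using the uniqueness in the structure theorem (Theorem~\ref{thm:decomp}) to pin down $V(\epsilon)^\XX$ exactly.

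For the statement about intervals I would start from the description \eqref{eq:superfamilyB}: $\Lambda_{-\epsilon}$ fixes $\pm\infty$ and restricts to a bijection of $\mR\times\{i\}$ onto itself for each $i=1,2$ (namely $(r,i)\mapsto(r-\epsilon,i)$); in particular it is a bijection of $B$, and by Lemma~\ref{lem:support_interval} it sends intervals to intervals. Now run through the cases defining $\TT$. If $I\subseteq\mR\times\{1\}$ then $\Lambda_{-\epsilon}(I)\subseteq\mR\times\{1\}$, so $\UU$ is preserved, and likewise $\DD$. Since $\Lambda_{-\epsilon}$ is a bijection of $B$, $\Lambda_{-\epsilon}(I)=B$ if and only if $I=B$, so $\BB$ is preserved. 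Finally, because $\Lambda_{-\epsilon}(-\infty)=-\infty$ and $\Lambda_{-\epsilon}$ is a bijection, the conditions ``$-\infty\in I$ and $I\neq B$'' are equivalent to ``$-\infty\in\Lambda_{-\epsilon}(I)$ and $\Lambda_{-\epsilon}(I)\neq B$'', so $\LL$ is preserved; the same argument at $+\infty$ handles $\RR$. Hence $I$ and $\Lambda_{-\epsilon}(I)$ are always the same type.

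For the module statement, write $V\cong\bigoplus_{\XX\in\TT}V^\XX$ with $V^\XX\cong\bigoplus_{I\in\mathscr{B}(V)\cap\XX}k_I$. Applying the shift functor and Lemma~\ref{lem:support_interval} gives $V^\XX(\epsilon)\cong\bigoplus_I k_{\Lambda_{-\epsilon}(I)}$, and by the first part every $\Lambda_{-\epsilon}(I)$ lies in $\XX$; thus $V^\XX(\epsilon)$ is a direct sum of interval modules all of type $\XX$. Since $V(\epsilon)=\bigoplus_{\XX\in\TT}V^\XX(\epsilon)$ then exhibits $V(\epsilon)$ as a direct sum grouped by type, and since the type-components of a pfd bipath persistence module are uniquely determined (they are read off from its unique interval decomposition, Theorem~\ref{thm:decomp}), we conclude $V(\epsilon)^\XX\cong V^\XX(\epsilon)$ for each $\XX\in\TT$.

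I do not expect a genuine obstacle here; the only points requiring a little care are (i) making sure the equivalence ``$I\in\XX \iff \Lambda_{-\epsilon}(I)\in\XX$'' uses only that $\Lambda_{-\epsilon}$ is a bijection fixing $\pm\infty$ and stabilizing the two legs $\mR\times\{i\}$, together with the already-established fact that it carries intervals to intervals, and (ii) invoking the uniqueness part of the structure theorem at the end, since that is precisely what upgrades ``$V(\epsilon)$ decomposes as $\bigoplus_\XX V^\XX(\epsilon)$'' to the identification $V(\epsilon)^\XX\cong V^\XX(\epsilon)$.
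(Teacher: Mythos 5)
Your proof is correct and takes the same approach as the paper, which simply states ``It is obvious by the definition of a translation $\Lambda_\epsilon$''; your argument is exactly a careful unpacking of that one-liner, checking that $\Lambda_{-\epsilon}$ fixes $\pm\infty$, stabilizes each leg $\mR\times\{i\}$, and is a poset automorphism, so each of $\UU,\DD,\BB,\LL,\RR$ is preserved, after which the module statement follows from additivity of the shift functor together with Lemma~\ref{lem:support_interval} and the uniqueness in Theorem~\ref{thm:decomp}.
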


  \begin{proof}
      It is obvious by the definition of a translation $\Lambda_\epsilon$.
  \end{proof}

 We next define a partial order $\leq_{\TT}$ on $\TT$ so that the Hasse diagram is given by  \begin{equation*}
\begin{tikzpicture}
    \coordinate (RR) at (-2,0);
    \coordinate (TT) at (-2.5,0);
     \coordinate (RR1) at (-2,0.2);
      \coordinate (RR2) at (-2,-0.2);
    \coordinate (UU) at (0,1.0);  
     \coordinate (UU1) at (0,0.9); 
    \coordinate (UUm) at (-0.6,0.9); 
     \coordinate (BB) at (0,0); 
     \coordinate (BBm) at (-0.5,0.0); 
    \coordinate (DD) at (0,-1.0);
     \coordinate (DD1) at (0,-0.9);
    \coordinate (DDm) at (-0.6,-0.9);
    \coordinate (LL) at (2,0);
    \coordinate (LLm1) at (1.3,0.3);
    \coordinate (LLm2) at (1.3,-0.3);
    \coordinate (LLm) at (1.3,0.0);
    \node at ($(TT)$) [left]{$\TT\colon$};
   \node at ($(RR)$) [left]{$\RR$}; 
      \node at ($(UU)$) [left]{$\UU$}; 
   \node at ($(BB)$) [left]{$\BB$}; 
   \node at ($(DD)$) [left]{$\DD$}; 
   \node at ($(LL)$) [left]{$\LL$.}; 

\draw [->] (RR) -- (BBm) ;
\draw [->] (RR1) -- (UUm) ;
\draw [->] (RR2) -- (DDm) ;
\draw [->] (BB) -- (LLm) ;
\draw [->] (UU1) --(LLm1) ;
\draw [->] (DD1) -- (LLm2) ;
\end{tikzpicture}    
\end{equation*}

As for the partial order $\leq_\TT$ on $\TT$, we have the following observation.

\begin{lemma}\label{lem:map-types}
Let $\XX,\YY \in \TT$ with $\XX \nleq_\TT \YY$. Then, for any $I \in \XX$ and $J \in \YY$, we have $\Hom_{B} (k_I,k_J) =0$. 
\end{lemma}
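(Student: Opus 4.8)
The statement to prove is Lemma~\ref{lem:map-types}: if $\XX \nleq_\TT \YY$ in the poset $\TT$, then $\Hom_B(k_I,k_J)=0$ for all $I \in \XX$ and $J \in \YY$. By Lemma~\ref{lem:well-def}, $\Hom_B(k_I,k_J)=0$ precisely when $\Omega(I,J)=\emptyset$, i.e.\ when $I\cap J$ has no connected component $C$ satisfying $I\cap C^\downarrow\subseteq C$ and $J\cap C^\uparrow\subseteq C$. So the plan is: for each ordered pair $(\XX,\YY)$ with $\XX\nleq_\TT\YY$, analyse the structure of intervals $I\in\XX$, $J\in\YY$ and show that no such $C$ can exist. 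The pairs to treat are exactly those not below in the Hasse diagram; reading off the diagram, $\XX\nleq_\TT\YY$ holds for $(\XX,\YY)$ with $\XX=\UU$ and $\YY\in\{\DD,\BB,\RR\}$; $\XX=\DD$ and $\YY\in\{\UU,\BB,\RR\}$; $\XX=\BB$ and $\YY\in\{\UU,\DD,\RR\}$; $\XX=\LL$ and $\YY\in\{\UU,\DD,\BB,\RR\}$; and $\XX=\RR$ and $\YY\in\{\UU,\DD,\BB\}$ (one should also double-check whether $\RR\nleq\RR$ etc.\ — reflexive pairs are $\leq_\TT$, so are excluded). Rather than enumerating all $\sim 15$ cases by brute force, I would organize the argument around a few structural observations about where $-\infty$ and $+\infty$ live and about the direction (upward vs.\ downward) of the two ``rays'' making up an interval.

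\textbf{Key structural facts.} First, recall how the types are characterised: $I\in\UU$ iff $I\subseteq\mR\times\{1\}$; $I\in\DD$ iff $I\subseteq\mR\times\{2\}$; $I\in\BB$ iff $I=B$; $I\in\LL\setminus\BB$ iff $-\infty\in I$ (and $+\infty\notin I$); $I\in\RR\setminus\BB$ iff $+\infty\in I$ (and $-\infty\notin I$). The main tool is: if $C\in\Omega(I,J)$ then $I\cap C^\downarrow\subseteq C$, so in particular any element of $I$ below some point of $C$ already lies in $C$; dually $J\cap C^\uparrow\subseteq C$. I would first dispose of the ``incomparable branches'' $\UU$ and $\DD$: if $I\subseteq\mR\times\{1\}$ and $J\subseteq\mR\times\{2\}$ then $I\cap J=\emptyset$, hence $\Omega(I,J)=\emptyset$ — this kills $(\UU,\DD)$ and $(\DD,\UU)$ immediately. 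Next, the pairs with $\YY=\RR$ and $\XX\in\{\UU,\DD,\BB\}$, and symmetrically $\XX=\RR$, $\YY\in\{\UU,\DD\}$, plus the pair $(\BB,\UU)$, $(\BB,\DD)$, $(\UU,\BB)$, $(\DD,\BB)$: here the key is that $\BB$- and $\RR$-type intervals contain $+\infty$ while $\UU$,$\DD$ do not, and $\LL$-type intervals contain $-\infty$. For a pair like $(\UU,\BB)$: $I\in\UU$ is bounded away from $\pm\infty$, $J=B$; if $C$ is a component of $I\cap J = I$ with $I\cap C^\downarrow\subseteq C$, then $C=I$ (since $I$ is connected), but then $J\cap C^\uparrow = B\cap I^\uparrow$ contains $+\infty\notin I = C$, contradiction. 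The same ``$C$ must equal the $\XX$-interval, but then its upset inside $J$ escapes'' mechanism, or its dual (``$C$ must equal $J$, but then its downset inside $I$ escapes''), should handle $(\UU,\RR),(\DD,\RR),(\BB,\RR)$, $(\RR,\UU),(\RR,\DD),(\RR,\BB)$, $(\UU,\BB),(\DD,\BB),(\LL,\UU),(\LL,\DD),(\LL,\BB),(\LL,\RR)$.

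\textbf{The delicate cases.} The genuinely annoying cases are those where $I\cap J$ is connected but \emph{not} equal to either $I$ or $J$ — this is exactly what happens for pairs involving $\LL$ and the pair $(\RR,?)$ where both intervals wrap around one of the two endpoints, and especially $(\LL,\RR)$: an $\LL$-interval $I$ and an $\RR$-interval $J$ can have nonempty, connected intersection $I\cap J$ lying in the ``middle'' (a pair of segments on the two lines, not touching $\pm\infty$). Here I must use \emph{both} conditions in \eqref{eq:condition} together. Take $C$ a component of $I\cap J$; since $-\infty\in I$ and $-\infty\le$ everything, $-\infty\in I\cap C^\downarrow$, so condition $I\cap C^\downarrow\subseteq C$ forces $-\infty\in C$; but $-\infty\notin J\supseteq I\cap J\supseteq C$ — contradiction. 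That settles $(\LL,\RR)$. Dually $(\RR,\LL)$ is fine since $\RR\le_\TT\LL$ is actually \emph{in} the order, so it's not something we need. For $(\LL,\UU),(\LL,\DD),(\LL,\BB)$ the same $-\infty$ argument works since $-\infty\in I$, $-\infty\in I\cap C^\downarrow$ forces $-\infty\in C\subseteq J$, but $-\infty\notin J$ for $J\in\UU\cup\DD$, and for $J\in\BB$ we instead get $C$ containing $-\infty$, forcing (by connectivity considerations and $J=B$) issues on the $+\infty$ side — here I'd argue $C^\uparrow\cap J=C^\uparrow$ must miss some point. Actually for $(\LL,\BB)$: $-\infty\in C$ forces, by the structure of $\LL$-intervals which have two ``tails'' but do not reach $+\infty$, that $C\subseteq I$ is a proper subset of $B=J$ missing $+\infty$, yet $J\cap C^\uparrow = B\cap C^\uparrow$ contains $+\infty$, contradiction. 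I expect the main obstacle to be bookkeeping: making sure the case division by $\XX\nleq_\TT\YY$ is complete and correct (reading the Hasse diagram carefully to list exactly the non-relations), and, within the $\LL$-related and $\RR$-related cases, correctly identifying the connected components of $I\cap J$ using the explicit descriptions of $\LL_i$ and $\RR_i$ from Table~\ref{tab:intervalsBLi}. The cleanest write-up will likely group the cases by a single reusable sublemma: ``if $C\in\Omega(I,J)$ and $x\in I$ with $x$ a minimum of $I$ (e.g.\ $x=-\infty$), then $x\in C$; dually for maxima of $J$'', and then observe that the required minimum/maximum lies outside $J$ (resp.\ $I$) in each forbidden pair.
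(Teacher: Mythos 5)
Your overall strategy matches the paper's: reduce to showing $\Omega(I,J)=\emptyset$ via Lemma~\ref{lem:well-def} and then argue case by case, grouping by where $\pm\infty$ lives. The paper does not spell out the cases, and your case analysis for the pairs that actually matter is correct. However, there is a concrete error in your enumeration of the non-relations, and it is not harmless.

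You list $(\RR,\UU)$, $(\RR,\DD)$, $(\RR,\BB)$ among the pairs with $\XX\nleq_\TT\YY$, claiming $16$ pairs in total. This misreads the Hasse diagram: $\RR$ is the \emph{minimum} of $(\TT,\leq_\TT)$ (the arrows emanate from $\RR$ to $\BB,\UU,\DD$ and from those to $\LL$), so $\RR\leq_\TT\YY$ for every $\YY$, and these three pairs are \emph{not} in the scope of the lemma. There are only $13$ pairs with $\XX\nleq_\TT\YY$. The error matters because for those three pairs the conclusion you are trying to force is actually false: take $I=\langle s^\sigma,t^\tau\rangle_\RR\in\RR$ and $J=(a^\alpha,b^\beta)\times\{1\}\in\UU$ with $a^\alpha\leq_* s^\sigma$. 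Then $C:=I\cap J=(s^\sigma,b^\beta)\times\{1\}$ is a single connected component satisfying both $I\cap C^\downarrow\subseteq C$ and $J\cap C^\uparrow\subseteq C$, so $\Omega(I,J)\neq\emptyset$ and $\Hom_B(k_I,k_J)\neq 0$. Thus the ``upset/downset escapes'' mechanism you invoke cannot handle $(\RR,\UU),(\RR,\DD),(\RR,\BB)$, and if you tried to write out those cases you would hit this obstruction. (Morally this is exactly why $\RR$ sits at the bottom of $\TT$: $\RR$-type modules can map to every other type.) Once you drop the three spurious pairs, your two reusable observations --- intersection empty for $(\UU,\DD)$ and $(\DD,\UU)$; and for the remaining $11$ pairs, the component $C$ cannot absorb the extremal element $-\infty$ (when $\XX=\LL$) or $+\infty$ (when $\YY\in\{\BB,\RR\}$) or $-\infty$ via $J^\downarrow$ (when $\XX\in\{\BB,\RR\}$, $\YY\in\{\UU,\DD\}$ or when $\YY=\RR$) --- do cover all required cases correctly.
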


\begin{proof}
Let $I$ and $J$ be intervals in $\XX$ and $\YY$, respectively such that $\XX \nleq_\TT \YY$. Then, 
    we can see $\Omega(I,J)=\emptyset$. By Lemma \ref{lem:well-def}, we have $\Hom_{B}(k_I,k_J)=0$.
\end{proof}

\begin{corollary}\label{rem:become-zero}
Let $I_1$, $I_2$, and $J$ be intervals of $B$. If $I_1$ and $I_2$ are of the same type, and if $J$ and $I_i$ are not of the same for $i=1,2$, then any composition of morphisms $k_{I_1} \to k_J \to k_{I_2}$ is the zero morphism.   
\end{corollary}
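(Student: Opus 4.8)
The plan is to deduce this directly from Lemma~\ref{lem:map-types}, using the fact that $\leq_\TT$ is a genuine partial order — in particular antisymmetric — which is visible from the displayed Hasse diagram of $\TT$.

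First I would fix notation. Since the partition $\mathbb{I}(B) = \UU \sqcup \DD \sqcup \BB \sqcup \LL \sqcup \RR$ consists of pairwise disjoint sets, every interval of $B$ lies in exactly one member of $\TT$; let $\XX \in \TT$ be the common type of $I_1$ and $I_2$, and let $\YY \in \TT$ be the type of $J$. The hypothesis that $J$ and $I_i$ are not of the same type then says exactly $\XX \neq \YY$ (and note this is a single condition, since $I_1$ and $I_2$ share the type $\XX$).

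Next I would argue by contradiction. Suppose $g \circ f \neq 0$ for some morphisms $f \colon k_{I_1} \to k_J$ and $g \colon k_J \to k_{I_2}$; then necessarily $f \neq 0$ and $g \neq 0$. From $f \neq 0$ we get $\Hom_B(k_{I_1}, k_J) \neq 0$, so the contrapositive of Lemma~\ref{lem:map-types} forces $\XX \leq_\TT \YY$. Symmetrically, $g \neq 0$ gives $\Hom_B(k_J, k_{I_2}) \neq 0$, hence $\YY \leq_\TT \XX$. Antisymmetry of $\leq_\TT$ now yields $\XX = \YY$, contradicting $\XX \neq \YY$. Therefore $g \circ f = 0$, which is the claim.

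I do not anticipate any genuine obstacle: the only facts used are that the type of an interval is well defined (immediate from the partition) and that $\leq_\TT$ is antisymmetric (immediate from its Hasse diagram). If one prefers to avoid invoking antisymmetry abstractly, the same conclusion follows by simply inspecting the diagram and observing that no two distinct elements $\XX,\YY \in \TT$ satisfy both $\XX \leq_\TT \YY$ and $\YY \leq_\TT \XX$.
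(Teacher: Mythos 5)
Your proof is correct and follows the same route as the paper, which simply states that the corollary follows from Lemma~\ref{lem:map-types}; you have merely spelled out the antisymmetry argument that the paper leaves implicit.
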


\begin{proof}
     It follows from Lemma \ref{lem:map-types}.
\end{proof}

Using the above observation, we give the following result, which asserts that an $\Lambda_\epsilon$-interleaving between pfd bipath persistence modules $V$ and $W$ induces an $\Lambda_\epsilon$-interleaving between $V^\XX$ and $W^\XX$ for each $ \XX\in \TT$.
\begin{proposition}\label{lem:type-separating}
  Let $V = \bigoplus_{\XX \in \TT} V^{\XX}$ and $W = \bigoplus_{\XX \in\TT} W^{\XX}$ be pfd bipath persistence modules. For any  $\epsilon\geq 0$, the following statements are equivalent.
  \begin{itemize}
      \item [\rm {(i)} ]$V$ and $W$ are $\Lambda_\epsilon$-interleaved.
      \item [\rm{(ii)} ] $V^{\XX}$ and $W^{\XX}$ are $\Lambda_\epsilon$-interleaved for each $\XX \in\TT$. 
  \end{itemize}   
\end{proposition}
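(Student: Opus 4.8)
The plan is to exploit the direct–sum decomposition $V=\bigoplus_{\XX\in\TT}V^{\XX}$ over the five–element poset $\TT=\{\UU,\DD,\BB,\LL,\RR\}$ and show that a $\Lambda_\epsilon$-interleaving between $V$ and $W$ becomes, after projecting onto the diagonal blocks, a ``block-diagonal'' interleaving. The implication (ii)$\Rightarrow$(i) is immediate: if $(\alpha^{\XX},\beta^{\XX})$ is a $\Lambda_\epsilon$-interleaving between $V^{\XX}$ and $W^{\XX}$ for each $\XX$, then $\bigl(\bigoplus_\XX\alpha^{\XX},\,\bigoplus_\XX\beta^{\XX}\bigr)$ is one between $V$ and $W$, using $V(\epsilon)=\bigoplus_\XX V^{\XX}(\epsilon)$ and $V_{0\to2\epsilon}=\bigoplus_\XX(V^{\XX})_{0\to2\epsilon}$ (both via Lemma~\ref{lem:type_shiftfunctor}). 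The substance is the converse (i)$\Rightarrow$(ii).

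So let $\alpha\colon V\to W(\epsilon)$, $\beta\colon W\to V(\epsilon)$ be a $\Lambda_\epsilon$-interleaving. Write $\iota_\XX^V\colon V^{\XX}\hookrightarrow V$ and $\pi_\XX^V\colon V\twoheadrightarrow V^{\XX}$ for the canonical maps of the decomposition (a \emph{finite} family, since $|\TT|=5$), and likewise for $W$ and for the shifted modules, noting $(V(\epsilon))^{\XX}=V^{\XX}(\epsilon)$. For $\XX,\YY\in\TT$ put $\alpha^{\XX\to\YY}:=\pi_\YY^{W(\epsilon)}\circ\alpha\circ\iota_\XX^V\colon V^{\XX}\to W^{\YY}(\epsilon)$ and $\beta^{\YY\to\XX}:=\pi_\XX^{V(\epsilon)}\circ\beta\circ\iota_\YY^W\colon W^{\YY}\to V^{\XX}(\epsilon)$, abbreviating $\alpha^{\XX}:=\alpha^{\XX\to\XX}$ and $\beta^{\XX}:=\beta^{\XX\to\XX}$. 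The key claim is the block triangularity: $\alpha^{\XX\to\YY}=0$ whenever $\XX\nleq_\TT\YY$, and $\beta^{\YY\to\XX}=0$ whenever $\YY\nleq_\TT\XX$. To prove the first, observe that $V^{\XX}$ is a direct sum of interval modules $k_I$ with $I$ of type $\XX$, and by Lemmas~\ref{lem:support_interval} and~\ref{lem:type_shiftfunctor} $W^{\YY}(\epsilon)$ is a direct sum of interval modules $k_{J'}$ with $J'$ of type $\YY$; since $V$ and $W$ are pfd, at each $b\in B$ these are \emph{finite} direct sums, so $(\alpha^{\XX\to\YY})_b$ decomposes into components, each being the $b$-th component of a morphism $k_I\to k_{J'}$, and $\Hom_B(k_I,k_{J'})=0$ by Lemma~\ref{lem:map-types} because $\XX\nleq_\TT\YY$. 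Hence $(\alpha^{\XX\to\YY})_b=0$ for every $b$, i.e. $\alpha^{\XX\to\YY}=0$; the statement for $\beta$ is identical.

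Finally, expand $\pi_\XX^{V(2\epsilon)}\circ(\beta(\epsilon)\circ\alpha)\circ\iota_\XX^V$ by inserting $\mathrm{id}_{W(\epsilon)}=\sum_{\YY\in\TT}\iota_\YY^{W(\epsilon)}\pi_\YY^{W(\epsilon)}$ between $\beta(\epsilon)$ and $\alpha$ and using functoriality of the shift:
\[
\pi_\XX^{V(2\epsilon)}\circ(\beta(\epsilon)\circ\alpha)\circ\iota_\XX^V=\sum_{\YY\in\TT}\beta^{\YY\to\XX}(\epsilon)\circ\alpha^{\XX\to\YY}.
\]
A summand can be nonzero only when $\XX\leq_\TT\YY$ and $\YY\leq_\TT\XX$, and antisymmetry of $\leq_\TT$ forces $\YY=\XX$, leaving $\beta^{\XX}(\epsilon)\circ\alpha^{\XX}$. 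On the other hand $\beta(\epsilon)\circ\alpha=V_{0\to2\epsilon}=\bigoplus_\XX(V^{\XX})_{0\to2\epsilon}$ is block diagonal, so the left-hand side is $(V^{\XX})_{0\to2\epsilon}$. Thus $\beta^{\XX}(\epsilon)\circ\alpha^{\XX}=(V^{\XX})_{0\to2\epsilon}$, and symmetrically $\alpha^{\XX}(\epsilon)\circ\beta^{\XX}=(W^{\XX})_{0\to2\epsilon}$ from $\alpha(\epsilon)\circ\beta=W_{0\to2\epsilon}$; hence $(\alpha^{\XX},\beta^{\XX})$ is a $\Lambda_\epsilon$-interleaving between $V^{\XX}$ and $W^{\XX}$, which is (ii).

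I expect the only delicate point to be the block-triangularity claim, specifically the passage from the module-level vanishing $\Hom_B(k_I,k_{J'})=0$ (Lemma~\ref{lem:map-types}) to the vanishing of $\alpha^{\XX\to\YY}$ as a morphism between the possibly infinite direct sums $V^{\XX}$ and $W^{\YY}(\epsilon)$; the resolution is that pointwise finite-dimensionality collapses all relevant direct sums to finite ones at each $b\in B$, so no question about $\Hom$ commuting with infinite coproducts arises. Everything else is formal bookkeeping with the finite family of projections $\pi_\XX$ and inclusions $\iota_\XX$, the shift functor, and the antisymmetry of $\leq_\TT$.
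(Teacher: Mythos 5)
Your proof is correct and takes essentially the same route as the paper: decompose $V$ and $W$ by type, view $\alpha,\beta$ as matrices of morphisms $V^\XX\to W^\YY(\epsilon)$ and $W^\YY\to V^\XX(\epsilon)$, and use the vanishing of $\Hom_B(k_I,k_J)$ between mismatched types (Lemma~\ref{lem:map-types}) to kill all off-diagonal contributions to the interleaving identities. The paper packages the ``only diagonal blocks survive the composition'' step as Corollary~\ref{rem:become-zero} rather than stating block-triangularity of $\alpha$ and $\beta$ separately and invoking antisymmetry of $\leq_\TT$, but these are the same observation in different clothing.
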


\begin{proof} 
The implication  (ii) $\Rightarrow$ (i) immediately follows from the definition. 

We prove (i) $\Rightarrow$ (ii). 
Let the pair $\alpha \colon V\to W(\epsilon)$ and $\beta \colon W \to V(\epsilon)$ be a $\Lambda_\epsilon$-interleaving between pfd bipath persistence modules $V$ and $W$. Then, they can be written of the form $\alpha =(\alpha_{\YY,\XX}\colon V^{\XX} \to W^{\YY}(\epsilon))_{\XX, \YY \in\TT}$ and $\beta=(\beta_{\YY,\XX}\colon W^{\XX} \to V^{\YY}(\epsilon))_{\XX, \YY \in\TT}$ by Lemma \ref{lem:type_shiftfunctor}. 
Then, by Corollary~\ref{rem:become-zero}, we have
    \begin{eqnarray*}
  (V^{\XX})_{0\to2\epsilon}  &=& \sum_{\YY \in\TT} 
\beta_{\XX, \YY}(\epsilon) \circ  \alpha_{\YY,\XX} = \beta_{\XX, \XX}(\epsilon) \circ  \alpha_{\XX,\XX} \text{ and } \\
  (W^{\XX})_{0\to2\epsilon}  &=& \sum_{\YY \in\TT} 
\alpha_{\XX, \YY}(\epsilon) \circ  \beta_{\YY,\XX} = \alpha_{\XX, \XX}(\epsilon) \circ  \beta_{\XX,\XX} 
\end{eqnarray*}
for each $\XX \in\TT$. This shows that ${V_\XX}$ and ${W_\XX}$ are $\Lambda_\epsilon$-interleaved for each $\XX \in\TT$.
\end{proof}

We additionally study morphisms between interval modules over $B$.

\begin{lemma}\label{lem:mapsLR}
    Let $I$ and $J$ be intervals in $\LL$ (resp. $I,J \in \RR$). Then $\Hom_B(k_I,k_J)$ $\neq$ $0$ if and only if $J \subseteq  I$ $($resp. $I \subseteq  J)$.
\end{lemma}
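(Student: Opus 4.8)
The plan is to compute the set $\Omega(I,J)$ from Lemma~\ref{lem:well-def} directly, exploiting the fact that every interval in $\LL$ is a down-set of $B$ and every interval in $\RR$ is an up-set. Once $\Omega(I,J)$ is pinned down, the last sentence of Lemma~\ref{lem:well-def} — that $\Hom_B(k_I,k_J)=0$ if and only if $\Omega(I,J)=\emptyset$ — delivers the statement.

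First I would record the structural fact. If $I\in\LL$, then $-\infty\in I$ and $I$ is convex, so for every $q\in I$ and every $r\le q$ one has $-\infty\le r\le q$ with $-\infty,q\in I$, hence $r\in I$; thus $I$ is a down-set and $I^\downarrow=I$. Dually, if $I\in\RR$ then $I$ is an up-set with $I^\uparrow=I$. Consequently, for $I,J\in\LL$ the intersection $I\cap J$ is again a down-set containing $-\infty$; it is nonempty and connected (any two of its elements are comparable to $-\infty$), hence it is its own unique connected component, say $C$, with $C^\downarrow=C$ and $C^\uparrow=B$ (the latter because $-\infty\in C$).

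Next I would check the two conditions of \eqref{eq:condition} for this $C$. The condition $I\cap C^\downarrow\subseteq C$ is automatic, since $C^\downarrow=C\subseteq I$ forces $I\cap C^\downarrow=C$. The condition $J\cap C^\uparrow\subseteq C$ becomes $J\cap B=J\subseteq I\cap J$, i.e. $J\subseteq I$. Hence $\Omega(I,J)\neq\emptyset$ if and only if $J\subseteq I$, and so $\Hom_B(k_I,k_J)\neq 0$ if and only if $J\subseteq I$. The case $I,J\in\RR$ is entirely dual: $C=I\cap J$ is again the unique connected component, $C^\uparrow=C$ renders $J\cap C^\uparrow\subseteq C$ automatic, while $C^\downarrow=B$ converts $I\cap C^\downarrow\subseteq C$ into $I\subseteq J$.

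I do not anticipate a real obstacle. The only points deserving a little care are the identification of the connected components of $I\cap J$ — immediate here since $I\cap J$ is connected through the global minimum $-\infty$ (resp. maximum $+\infty$) of $B$ — and correctly matching the asymmetric pair of conditions in \eqref{eq:condition} against whether one is in $\LL$ (where the $C^\downarrow$-condition is vacuous) or in $\RR$ (where the $C^\uparrow$-condition is vacuous).
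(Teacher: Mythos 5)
Your proof is correct and takes essentially the same approach as the paper: both invoke Lemma~\ref{lem:well-def} and reduce to the key fact that every interval in $\LL$ (resp.\ $\RR$) contains $-\infty$ (resp.\ $+\infty$). You simply spell out the $\Omega(I,J)$ computation that the paper leaves implicit.
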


\begin{proof}
For intervals $I,J$ in $\LL$ (resp. $\RR$),
    by Lemma \ref{lem:well-def}, we have 
    \[
    \Hom_B(k_I,k_J) \neq 0 \iff |\Omega(I,J)| \neq 0 \iff J\subseteq I \text{ (resp.  $ I \subseteq J$), }
    \]
    where the last equivalence follows from a fact that any interval in $\LL$ (resp. $\RR$) contains a minimal element $-\infty\in B$ (resp. a maximum element $+\infty \in B$).
\end{proof}

\begin{lemma}\label{lem:aaa}
    Let $I$ and $J$ be the same type of intervals of $B$. For any $\epsilon\geq0$, we have the following.  
    \begin{enumerate}
        \item [{\rm(1)}]If $I, J \in \XX$ for $\XX \in \{\UU, \DD\}$, then the following statements are equivalent.
        \begin{enumerate}
            \item  [{\rm (i)}] $k_I$ and $k_J$ are $\Lambda_\epsilon$-interleaved.
            \item  [{\rm (ii)}] $k_I$ and $k_J$ are $\Lambda_{2\epsilon}$-trivial, or $J \subseteq \Ex_{\epsilon}^{\Lambda}(I)$ and $I \subseteq \Ex_{\epsilon}^{\Lambda}(J)$.
        \end{enumerate}
        
        \item [{\rm(2)}] For any interval in $\LL\sqcup \RR \sqcup \BB$, the corresponding interval module is $\Lambda_{2\epsilon}$-significant. In addition,
        if $I, J \in \XX$ for $\XX \in \{\LL, \RR, \BB\}$, then the following statements are equivalent.
        \begin{enumerate}
            \item [{\rm (i)}]  $k_I$ and $k_J$ are $\Lambda_\epsilon$-interleaved.
            \item [{\rm (ii)}]  $J \subseteq \Ex_{\epsilon}^{\Lambda}(I)$ and $I \subseteq \Ex_{\epsilon}^{\Lambda}(J)$.
            \item [{\rm (iii)}]   $\Lambda_{-\epsilon}(I) \subseteq J \subseteq \Lambda_\epsilon(I)$.
             \item [{\rm (iv)}]   $\Lambda_{-\epsilon}(J) \subseteq I \subseteq \Lambda_\epsilon(J)$.
        \end{enumerate}
  
    \end{enumerate}
\end{lemma}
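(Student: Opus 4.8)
The plan is to treat the two items separately, but both rest on the same two ingredients. First, Proposition~\ref{prop:matching-induce-matching} already gives the implication ``$J\subseteq\Ex^\Lambda_\epsilon(I)$ and $I\subseteq\Ex^\Lambda_\epsilon(J)$'' $\Rightarrow$ ``$k_I$ and $k_J$ are $\Lambda_\epsilon$-interleaved'' for arbitrary intervals of $B$, so the real content is always the converse. Second, whenever at least one of $k_I,k_J$ is $\Lambda_{2\epsilon}$-significant, both legs $\alpha\colon k_I\to k_J(\epsilon)=k_{\Lambda_{-\epsilon}(J)}$ and $\beta\colon k_J\to k_I(\epsilon)=k_{\Lambda_{-\epsilon}(I)}$ of any $\Lambda_\epsilon$-interleaving must be nonzero, because one of the composites $\beta(\epsilon)\circ\alpha=(k_I)_{0\to2\epsilon}$, $\alpha(\epsilon)\circ\beta=(k_J)_{0\to2\epsilon}$ is then nonzero; together with Lemma~\ref{lem:type_shiftfunctor} (the shifted targets $\Lambda_{-\epsilon}(J),\Lambda_{-\epsilon}(I)$ keep the type of $J,I$) this lets us convert an interleaving into inclusions of the underlying interval sets via Lemma~\ref{lem:well-def} and Lemma~\ref{lem:mapsLR}.

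\textbf{Item (2).} First I would record the $\Lambda_{2\epsilon}$-significance claim: if $I\in\LL$ then $-\infty\in I$ is a fixed point of $\Lambda_{2\epsilon}$, so $\bigl((k_I)_{0\to2\epsilon}\bigr)_{-\infty}=k_I(-\infty,-\infty)=\id_k\neq0$; the case $I\in\RR$ is dual using $+\infty$, and $I=B$ is clear. Next I would compute the thickenings. For $I\in\LL$, convexity and connectivity force $I=\{-\infty\}\cup(D_1\times\{1\})\cup(D_2\times\{2\})$ with each $D_i$ a downset of $\mR$; since $-\infty\in I$ we get $I^\uparrow=B$, and since a subset of this shape already equals its own downset in $B$ we get $\Lambda_\epsilon(I)^\downarrow=\Lambda_\epsilon(I)$, hence $\Ex^\Lambda_\epsilon(I)=\Lambda_{-\epsilon}(I^\uparrow)\cap\Lambda_\epsilon(I)^\downarrow=\Lambda_\epsilon(I)$. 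Dually $\Ex^\Lambda_\epsilon(I)=\Lambda_{-\epsilon}(I)$ for $I\in\RR$, and $\Ex^\Lambda_\epsilon(B)=B$. Feeding these into ``$J\subseteq\Ex^\Lambda_\epsilon(I)$ and $I\subseteq\Ex^\Lambda_\epsilon(J)$'' and translating one inclusion by $\Lambda_{\pm\epsilon}$ yields the sandwich conditions (iii), (iv) (for $I,J\in\LL$ the sandwich is $\Lambda_{-\epsilon}(I)\subseteq J\subseteq\Lambda_\epsilon(I)$, with the evident analogue for $\RR$ and $\BB$), so (ii)$\Leftrightarrow$(iii)$\Leftrightarrow$(iv). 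The implication (ii)$\Rightarrow$(i) is Proposition~\ref{prop:matching-induce-matching}. For (i)$\Rightarrow$(iii): $k_I$ being $\Lambda_{2\epsilon}$-significant forces $\alpha\neq0$ and $\beta\neq0$, and by Lemma~\ref{lem:mapsLR} (applicable thanks to Lemma~\ref{lem:type_shiftfunctor}) this turns into inclusions among $I$, $J$, $\Lambda_{-\epsilon}(I)$, $\Lambda_{-\epsilon}(J)$ which, after translating by $\Lambda_{\pm\epsilon}$, are exactly the required sandwich. The case $I,J\in\BB$ is trivial since then $I=J=B$.

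\textbf{Item (1).} Under the identification $\mR\times\{1\}\cong\mR$ an interval $I\in\UU$ is just an interval of the totally ordered set $\mR$, on which $\Lambda$ restricts to the standard shift $r\mapsto r+\epsilon$; one checks that $\Ex^\Lambda_\epsilon(I)$ then lies in $\mR\times\{1\}$ and is the usual $\epsilon$-thickening of $I$ (each end moved out by $\epsilon$), and that $k_I$ is $\Lambda_{2\epsilon}$-trivial precisely when the length of $I$ is at most $2\epsilon$. So (1) is the classical description of the $\epsilon$-interleaving relation for interval modules over $\mR$, which I would prove as follows. For (ii)$\Rightarrow$(i): if $k_I$ and $k_J$ are both $\Lambda_{2\epsilon}$-trivial then $(\alpha,\beta)=(0,0)$ is a $\Lambda_\epsilon$-interleaving; otherwise the mutual thickening condition gives one by Proposition~\ref{prop:matching-induce-matching}. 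For (i)$\Rightarrow$(ii): if both are $\Lambda_{2\epsilon}$-trivial we are in the first alternative of (ii); otherwise one is $\Lambda_{2\epsilon}$-significant, so $\alpha\neq0$ and $\beta\neq0$, and since morphisms between interval modules over a totally ordered set are controlled by $\Omega(\cdot,\cdot)$ (Lemma~\ref{lem:well-def}), $\alpha\neq0$ and $\beta\neq0$ translate into the endpoint inequalities $|\inf I-\inf J|\leq\epsilon$ and $|\sup I-\sup J|\leq\epsilon$ (in the decorated sense), that is, $J\subseteq\Ex^\Lambda_\epsilon(I)$ and $I\subseteq\Ex^\Lambda_\epsilon(J)$, which is the second alternative of (ii). The argument for $\DD$ is identical.

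The step I expect to be fiddly rather than deep is the bookkeeping with decorated endpoints and the boundary cases in Item~(1) (intervals of $\UU$ running off to $\pm\infty$, and the precise open/closed conditions in the definitions of $\Lambda_{2\epsilon}$-triviality and of $\Ex^\Lambda_\epsilon$). The one genuinely structural observation is the collapse $I^\uparrow=B$ for $I\in\LL$ (and $I^\downarrow=B$ for $I\in\RR$): it makes the thickening of an $\LL$- or $\RR$-interval one-sided, which is what reduces Item~(2) to the sort of endpoint computation one performs over a totally ordered set.
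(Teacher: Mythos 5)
Your proof is correct and follows essentially the same approach as the paper: compute $\Ex^\Lambda_\epsilon$ on each type of interval (one-sided thickening for $\LL$ and $\RR$ because $I^\uparrow=B$ resp.\ $I^\downarrow=B$), use Proposition~\ref{prop:matching-induce-matching} for the direction (ii)$\Rightarrow$(i), and use $\Lambda_{2\epsilon}$-significance to force $\alpha\neq0$ and $\beta\neq0$ and then extract inclusions from Lemma~\ref{lem:mapsLR} (via Lemma~\ref{lem:support_interval}/\ref{lem:type_shiftfunctor}). The only real differences are that you work out Item~(1) explicitly while the paper cites it as a well-known fact about $\mR$-interval modules, and you note that the $\RR$-case sandwich is the mirror of the $\LL$-case sandwich (a point worth spelling out, since conditions (iii) and (iv) as literally written match $\LL$ but for $I,J\in\RR$ the correct form is $\Lambda_\epsilon(I)\subseteq J\subseteq\Lambda_{-\epsilon}(I)$).
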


\begin{proof}

(1) By identifying $\mR\times \{i\}\  (i=1,2)$ with $\mR$, we regard $\UU$ and $\DD$ as the set of intervals of $\mR$. Then the statement of (1) is a well-known fact, for example, see \cite[p.9]{Les2018AlgebraicStability}.

(2) For any interval of $\LL\sqcup \RR \sqcup \BB$, it contains $-\infty \in B$ or $+\infty \in B$, which are fixed points of $\Lambda_{2\epsilon}$, by definition. 
This implies that the corresponding interval module is $\Lambda_{2\epsilon}$-significant.

If $I = J =B \in \BB$, then the equivalences among (i), (ii), (iii), and (iv) are obvious.

We next consider the case of $\LL$. Notice that any interval $T\in \LL$ contains the minimal element $-\infty\in B$. Thus, we have $\Lambda_\epsilon(T)^{\downarrow} = \Lambda_\epsilon(T)$ and $\Lambda_{-\epsilon}(T)^{\uparrow} = B$. Therefore, we have $\Ex_\epsilon^{\Lambda}(T) = \Lambda_\epsilon(T)$ for any $T\in \LL$.  From this observation, we can deduce the equivalences among (ii), (iii), and (iv). In addition, (ii) $\Rightarrow$ (i) follows from the fact that $\Lambda$ is an $\mR$-action on $B$ and  Proposition~\ref{prop:matching-induce-matching}. We show (i) $\Rightarrow$ (iii). Let a pair $\alpha \colon k_I \to k_J(\epsilon)$ and $\beta \colon  k_J \to k_I(\epsilon)$ be a $\Lambda_\epsilon$-interleaving between $k_I$ and $k_J$.  Since the interval modules $k_I$ and $k_J$ are $\Lambda_{2\epsilon}$-significant, we have $0 \neq (k_I)_{0\to 2\epsilon} =  \beta(\epsilon) \circ \alpha $, which implies $\alpha \neq 0$ and $\beta \neq 0$. 
By Lemmas~\ref{lem:support_interval} and \ref{lem:mapsLR}, we have $\Lambda_{-\epsilon}(J) \subseteq I$ and $\Lambda_{-\epsilon}(I) \subseteq J$. Then we obtain $\Lambda_{-\epsilon}(I) \subseteq J \subseteq \Lambda_\epsilon(I)$, as desired.

As for the case of $\RR$, we can deduce the desired equivalences by a similar argument as the case of $\LL$. This completes the proof.
\end{proof}

\begin{lemma}\label{prop:interleaving-type}
Let $I$ and $J$ be intervals of $B$ and $\epsilon \geq0$.
    If $k_I$ and $k_J$ are $\Lambda_\epsilon$-interleaved, then $I$ and $J$ are of the same type, or $k_I$ and $k_J$ are $\Lambda_{2\epsilon}$-trivial. 
\end{lemma}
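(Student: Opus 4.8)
The plan is to argue by contrapositive on the "same type" conclusion. Suppose $k_I$ and $k_J$ are $\Lambda_\epsilon$-interleaved via a pair $\alpha\colon k_I\to k_J(\epsilon)$ and $\beta\colon k_J\to k_I(\epsilon)$ satisfying $\beta(\epsilon)\circ\alpha = (k_I)_{0\to 2\epsilon}$ and $\alpha(\epsilon)\circ\beta = (k_J)_{0\to 2\epsilon}$, and assume $I$ and $J$ are \emph{not} of the same type; I will show both $k_I$ and $k_J$ are $\Lambda_{2\epsilon}$-trivial. By Lemma \ref{lem:support_interval}, $k_J(\epsilon)=k_{\Lambda_{-\epsilon}(J)}$ and $k_I(\epsilon)=k_{\Lambda_{-\epsilon}(I)}$, and by Lemma \ref{lem:type_shiftfunctor} the shifted intervals $\Lambda_{-\epsilon}(I)$ and $\Lambda_{-\epsilon}(J)$ have the same respective types as $I$ and $J$; hence $\Lambda_{-\epsilon}(J)$ is not the same type as $I$, and $\Lambda_{-\epsilon}(I)$ is not the same type as $J$.

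First I would handle the composite $(k_I)_{0\to 2\epsilon} = \beta(\epsilon)\circ\alpha$. This is a composition $k_I \xrightarrow{\alpha} k_{\Lambda_{-\epsilon}(J)} \xrightarrow{\beta(\epsilon)} k_{\Lambda_{-2\epsilon}(I)}$, where $I$ and $\Lambda_{-2\epsilon}(I)$ are of the same type (Lemma \ref{lem:type_shiftfunctor} applied twice) while the intermediate interval $\Lambda_{-\epsilon}(J)$ is of a different type from both. Therefore Corollary \ref{rem:become-zero} applies directly and gives $(k_I)_{0\to 2\epsilon} = \beta(\epsilon)\circ\alpha = 0$, i.e. $k_I$ is $\Lambda_{2\epsilon}$-trivial. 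Symmetrically, the composite $(k_J)_{0\to 2\epsilon} = \alpha(\epsilon)\circ\beta$ factors through $k_{\Lambda_{-\epsilon}(I)}$, which is of a different type from both $J$ and $\Lambda_{-2\epsilon}(J)$; applying Corollary \ref{rem:become-zero} once more yields $(k_J)_{0\to 2\epsilon}=0$, so $k_J$ is also $\Lambda_{2\epsilon}$-trivial.

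I do not expect a serious obstacle here: the statement is essentially a bookkeeping consequence of the fact that translations preserve interval type (Lemma \ref{lem:type_shiftfunctor}) together with the vanishing of any morphism composite that passes through an interval module of the wrong type (Corollary \ref{rem:become-zero}). The only point that warrants care is the precise identification $k_I(\epsilon) = k_{\Lambda_{-\epsilon}(I)}$ and that $\Lambda_{-\epsilon}$, being a poset automorphism, does not change which of the five classes $\UU,\DD,\BB,\LL,\RR$ an interval lies in — but this is exactly Lemmas \ref{lem:support_interval} and \ref{lem:type_shiftfunctor}, so there is nothing further to prove.
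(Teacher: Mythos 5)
Your proof is correct and takes essentially the same approach as the paper's: both reduce the claim to Corollary \ref{rem:become-zero}, using the fact that translations preserve the type of an interval. The paper states this more tersely, while you spell out the intermediate identifications $k_I(\epsilon)=k_{\Lambda_{-\epsilon}(I)}$ (Lemma \ref{lem:support_interval}) and the type-preservation (Lemma \ref{lem:type_shiftfunctor}) needed to see that the corollary really applies — a harmless and arguably clearer expansion of the same argument.
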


\begin{proof} Let the pair $\alpha \colon k_I \to k_J(\epsilon)$ and $\beta \colon k_J \to k_I(\epsilon)$ be a $\Lambda_\epsilon$-interleaving between $k_I$ and $k_J$.
If $I$ and $J$ are not the same type, then, we have  $0= \beta(\epsilon)\circ \alpha = (k_I)_{0\to 2\epsilon}$ and $0=\alpha(\epsilon)\circ \beta = (k_J)_{0\to 2\epsilon}$ by Corollary  \ref{rem:become-zero}. Thus, $k_I$ and $k_J$ are $\Lambda_{2\epsilon}$-trivial.
    \end{proof}

Before closing this subsection, we prove the implication {\rm(b)} $\Rightarrow$ {\rm(a)} in the statements of Theorem~\ref{thm:isometry}.

\begin{proof}[Proof of {\rm(b)} $\Rightarrow$ {\rm(a)} in Theorem \ref{thm:isometry}]
Suppose that $V$ and $W$ are bottleneck $\Lambda_\epsilon$-interleaved. We show  that $\mathscr{B}(V)$ and $\mathscr{B}(W)$ are $\Lambda_\epsilon$-matched.

 By Lemma \ref{prop:interleaving-type}, if $I \in \coim \sigma$ and $\sigma(I)$ are of different types, then they are $\Lambda_{\epsilon}$-trivial. By removing these intervals from $\coim \sigma$ and $\im \sigma$ respectively, we obtain a new bottleneck $\Lambda_\epsilon$-interleaving $\sigma' \colon \mathscr{B}(V) \nrightarrow \mathscr{B}(W)$ such that  $I \in \coim \sigma'$ and $\sigma'(I)$ are of the same type. 

     By Lemma \ref{lem:aaa}, if $I \in \coim \sigma'$ and $\sigma'(I)$ are $\Lambda_\epsilon$-interleaved, then both $I$ and $\sigma'(I)$ are $\Lambda_{2\epsilon}$-trivial, or $I \subseteq \Ex_{\epsilon}^{\Lambda} (\sigma'(I))$ and  $\sigma'(I) \subseteq \Ex_{\epsilon}^{\Lambda} (I)$ for all $I \in \coim \sigma'$. 
    By removing these $\Lambda_{2\epsilon}$-trivial intervals $I$ and $\sigma'(I)$ from $\coim \sigma'$ and $\im \sigma'$ respectively, we can construct a new bottleneck $\Lambda_\epsilon$-interleaving $\sigma''  \colon  \mathscr{B}(V) \nrightarrow \mathscr{B}(W)$ such that $I \subseteq \Ex_{\epsilon}^{\Lambda}(\sigma''(I))$ and $\sigma''(I) \subseteq  \Ex_{\epsilon}^{\Lambda}(I)$ for all $I \in \coim \sigma''$.  
    This is a $\Lambda_\epsilon$-matching between $\mathscr{B}(V)$ and $\mathscr{B}(W)$. 
\end{proof}

\subsection{Proof of the algebraic stability theorem of bipath persistence modules}\label{subsec:AST}
In this subsection, we prove the implication (c) $\Rightarrow$ (b) in Theorem \ref{thm:isometry}. For this purpose, we give the following lemma, where statements (1) and (2) are separated because (1) is the simpler case, while (2) contains the essential argument.

\begin{lemma}\label{lem:final}
     Let $V$ and $W$ be pfd bipath persistence modules.
\begin{enumerate}
    \item  [{\rm (1)}]  
For $\XX \in \{\UU, \DD, \BB \}$, if $V^{\XX}$ and $W^{\XX}$ are $\Lambda_\epsilon$-interleaved, then they are bottleneck $\Lambda_\epsilon$-interleaved. 
\item  [{\rm (2)}]  For $\XX \in \{\LL, \RR \}$, if $V^{\XX}$ and $W^{\XX}$ are $\Lambda_\epsilon$-interleaved, then they are bottleneck $\Lambda_\epsilon$-interleaved. 
\end{enumerate}
\end{lemma}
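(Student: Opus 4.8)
The plan is to reduce the problem to a matching problem in the bipartite graph $G = (\mathscr{B}(V^\XX), \mathscr{B}(W^\XX); E_{\mu_\epsilon^\Lambda})$ and then invoke Proposition~\ref{thm:matching}: it suffices to verify that the full subgraphs $G(\mathscr{B}_{2\epsilon}(V^\XX), \mathscr{B}(W^\XX))$ and $G(\mathscr{B}(V^\XX), \mathscr{B}_{2\epsilon}(W^\XX))$ satisfy Conditions~(H) and~(H$'$) respectively. For part~(1), the cases $\XX\in\{\UU,\DD\}$ are the classical one-parameter situation: identifying $\mR\times\{i\}$ with $\mR$, $V^\XX$ and $W^\XX$ are pfd $\mR$-persistence modules and the claim is the classical algebraic stability theorem for $\mR$-persistence modules, whose proof via Hall's theorem (for instance \cite{bjerkevik2021onthesatbility}, \cite{bauer2015induced}) establishes exactly the required Hall-type conditions from an $\epsilon$-interleaving. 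The case $\XX=\BB$ is trivial since $\mathscr{B}(V^\BB)$ and $\mathscr{B}(W^\BB)$ are multisets consisting only of copies of $k_B$, every copy is $\Lambda_{2\epsilon}$-significant by Lemma~\ref{lem:aaa}(2), and by Proposition~\ref{lem:type-separating} together with a dimension count at the single object $B$ the two multisets have equal cardinality (both equal $\dim_k V_{B}^\BB = \dim_k W_B^\BB$ modulo the interleaving), so the identity matching works.

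For part~(2), fix $\XX = \LL$ (the case $\XX=\RR$ is symmetric, using $+\infty$ in place of $-\infty$). By Lemma~\ref{lem:aaa}(2), every interval in $\LL$ gives a $\Lambda_{2\epsilon}$-significant module, so $\mathscr{B}_{2\epsilon}(V^\LL) = \mathscr{B}(V^\LL)$ and likewise for $W$; thus I must show $G$ itself satisfies both~(H) and~(H$'$), i.e.\ that there is a matching covering everything — a perfect matching up to the partial-matching convention. The key structural fact is that on $\mathbb{I}_\ell$ the relation "$k_I$ and $k_J$ are $\Lambda_\epsilon$-interleaved" is, by Lemma~\ref{lem:aaa}(2), equivalent to the totally-ordered condition $\Lambda_{-\epsilon}(I)\subseteq J\subseteq \Lambda_\epsilon(I)$; intervals in $\LL$ are linearly ordered by inclusion (Lemma~\ref{lem:mapsLR} and the discussion of $\leq_\ell$), so after fixing the (total) order on $\mathscr{B}(V^\LL)$ and $\mathscr{B}(W^\LL)$ induced by inclusion, $G$ becomes an "interval bigraph" of a one-dimensional nature. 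Concretely, I would encode each $I\in\LL$ by its pair of endpoints and observe that the interleaving condition amounts to a pair of one-parameter $\epsilon$-closeness conditions on the two "legs'' of the interval; the hypothesis that $V^\LL$ and $W^\LL$ are $\Lambda_\epsilon$-interleaved then yields, by counting dimensions at the vertices $(r,1)$ and $(r,2)$ for every $r$, the Hall inequalities $|X'|\le|N_G(X')|$ for finite down-closed (equivalently, inclusion-interval) subsets $X'$, and finiteness of neighborhoods is Lemma~\ref{lem:finness}. Since in a totally ordered incidence structure it is enough to check the Hall condition on "intervals'' of the order, this is a finite bookkeeping argument rather than a genuinely new combinatorial difficulty.

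The main obstacle I expect is the precise extraction of the Hall inequalities for $\LL$ (and $\RR$) from an abstract $\Lambda_\epsilon$-interleaving $\alpha\colon V^\LL\to W^\LL(\epsilon)$, $\beta\colon W^\LL\to V^\LL(\epsilon)$: one needs to turn the equalities $\beta(\epsilon)\circ\alpha = V^\LL_{0\to2\epsilon}$, $\alpha(\epsilon)\circ\beta = W^\LL_{0\to2\epsilon}$ into rank/dimension inequalities that dominate $|N_G(X')|$ from below for every finite $X'\subseteq\mathscr{B}(V^\LL)$. The clean way to do this is to mimic the induced-matching argument of \cite{bjerkevik2021onthesatbility}: restrict $\alpha$ and $\beta$ to the appropriate sub-down-sets, use that each $k_I$ with $I\in\LL$ is a "left-infinite" interval so that its image under a shift is again such an interval (Lemma~\ref{lem:support_interval}), and read off at a suitable element $b\in B$ that $\dim_k\bigl(V^\LL_{0\to2\epsilon}\bigr)_b$ counts exactly the intervals in a prescribed down-set, while $\dim_k W^\LL_{\Lambda_\epsilon(b)}$ bounds $|N_G(\cdot)|$. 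Once the two Hall conditions are in hand, Proposition~\ref{thm:matching} delivers the bottleneck $\Lambda_\epsilon$-interleaving and the lemma follows.
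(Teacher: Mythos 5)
Your plan for part~(1) matches the paper: reduce $\UU,\DD$ to the classical $\mR$\nobreakdash-persistence algebraic stability theorem, and handle $\BB$ by observing that there is only one interval of that type and the shift fixes it, so the interleaving forces an isomorphism. The framework for part~(2) — invoking Proposition~\ref{thm:matching} after verifying Conditions~(H) and~(H$'$), and noting via Lemma~\ref{lem:aaa}(2) that $\mathscr{B}_{2\epsilon}(V^\LL)=\mathscr{B}(V^\LL)$ — is also the paper's framework, and evaluating at $-\infty$ to get injectivity is the right place to read off the inequality.

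However, there is a genuine gap in part~(2), and it stems from a false premise: you claim that ``intervals in $\LL$ are linearly ordered by inclusion.'' They are not. An interval $I=\langle s^\sigma,t^\tau\rangle_\LL$ is parametrized by a \emph{pair} of decorated numbers, and by Lemma~\ref{lem:add_dec}(1), $I\subseteq J$ iff both coordinates are $\leq_*$; this is a product order, not a total order, so two intervals in $\LL$ are in general incomparable by inclusion. Consequently the ``interval bigraph of one-dimensional nature'' picture does not hold, and counting dimensions separately at points $(r,1)$ and $(r,2)$ only controls one leg at a time — it does not deliver the joint inequality. The missing idea is precisely the total \emph{preorder} $\leq_\ell$ introduced in the paper via $\ell(\langle s^\sigma,t^\tau\rangle_\LL)=s^\sigma+t^\tau$ (in the generic regime $\LL_3$), which collapses the two legs into a single scalar. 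This makes the block matrix of $\beta(\epsilon)\circ\alpha$ (restricted to $X$ and $\mu_\epsilon^\Lambda(X)$, ordered by $\leq_\ell$) upper triangular with the nonzero maps $(k_{I_i})_{0\to2\epsilon}$ on the diagonal — \emph{but} the triangularity is not automatic from the order; it is exactly the content of Lemma~\ref{lem:interleaved-RST}, which shows that a nonzero zig $k_R\to k_S(\epsilon)\to k_T(2\epsilon)$ with $R\leq_\ell T$ forces $k_S$ to be $\Lambda_\epsilon$\nobreakdash-interleaved with $R$ or $T$, hence $S\in\mu_\epsilon^\Lambda(X)$. Nothing in your proposal plays this role; the ``restrict $\alpha$ and $\beta$ to the appropriate sub-down-sets and read off at a suitable element $b$'' step is the place where a substitute would be needed, and as written it would fail because the relevant ``down-sets'' are with respect to a non-total order and the bound $\dim_k W^\LL_{\Lambda_\epsilon(b)}\geq |N_G(\cdot)|$ is not available at a single $b$ without the triangularity.
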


Using Lemma \ref{lem:final}, we prove the implication {\rm(c)} $\Rightarrow$ {\rm(b)} in the statements of Theorem \ref{thm:isometry}.
\begin{proof}[Proof of {\rm(c)} $\Rightarrow$ {\rm(b)} in Theorem \ref{thm:isometry}]\label{thm:AST}
Suppose that pfd bipath persistence modules $V$ and $W$ are $\Lambda_\epsilon$-interleaved.
   By Proposition \ref{lem:type-separating}, we have a $\Lambda_\epsilon$-interleaving between $V^{\XX}$ and $W^{\XX}$ for each $\XX \in\TT$. By Lemma~\ref{lem:final}, there exists a bottleneck $\Lambda_\epsilon$-interleaving $\sigma_{\XX}$ between $V^{\XX}$ and $W^{\XX}$ for each $\XX \in\TT$.  Then, we can construct a bottleneck $\Lambda_\epsilon$-interleaving $\sigma$ between $V^\XX$ and $W^\XX$ such that $\coim \sigma = \bigsqcup_{\XX \in\TT} \coim \sigma_{\XX}$ and $\im \sigma =  \bigsqcup_{\XX \in\TT} \im \sigma_{\XX}$ which sends $I \in \coim \sigma $ to $\sigma_\XX (I) \in \im \sigma_\XX$ for $I \in \coim \sigma_\XX$. 
\end{proof}

Below, we prove Lemma \ref{lem:final}. We begin with the proof of Lemma \ref{lem:final}(1).

\begin{proof}[Proof of Lemma \ref{lem:final}{\rm(1)}]
Suppose that $V^\XX$ and $W^\XX$ are $\Lambda_\epsilon$-interleaved for  $\XX \in \{\UU, \DD ,\BB\}$. 

As for the case $\XX =\BB$, we can check that a $\Lambda_\epsilon$-interleaving between $V^\BB$ and $W^\BB$ induces $V^\BB \cong W^\BB$. It implies that they are bottleneck $\Lambda_\epsilon$-interleaved.

As for the case $\XX =\UU$ or $\DD$, we first note that pfd bipath persistence module $V^\XX$ and  $W^\XX$ can be regarded as pfd $\mR$-persistence modules.
Then, we can apply the algebraic stability theorem of pfd $\mR$-persistence modules (e.g., \cite[Theorem 4.3]{bjerkevik2021onthesatbility}) and obtain a bottleneck $\Lambda_\epsilon$-interleaving between $V^\XX$ and $W^\XX$.
\end{proof}

 We next prove Lemma~\ref{lem:final}(2) using  a graph theory. In Lemma \ref{lem:final}(2), the case for $\RR$ can be proved by a similar discussion of $\LL$. Therefore, we consider the case $\XX=\LL$ below. The notation used below is given in Subsection~\ref{subsec:TheBipathPoset}.


For our purpose, we first define an addition on decorated numbers.  Let $s^\sigma, t^\tau \in \mR^*$. We define $s^\sigma + t^\tau$ by
\begin{equation*}
    s^\sigma + t^\tau :=
    \begin{cases}
        (s+t)^+ & \text{ if $(\sigma,\tau) =(+,+)$},\\
        (s+t)^- & \text{ else.}
    \end{cases}
\end{equation*}
It is easy to see that the addition satisfies the following properties. Let $s_1^{\sigma_1}, s_2^{\sigma_2},  t_1^{\tau_1},  t_2^{\tau_2},  \in \mR^*$.
\begin{enumerate}
    \item [{\rm (1)}] If $s_1^{\sigma_1} = s_2^{\sigma_2}$, then $s_1=s_2$. Moreover if $s_1^{\sigma_1} + s_2^{\sigma_2} = 
t_1^{\tau_1} +t_2^{\tau_2}$ then $s_1 +s_2 = t_1 +t_2$.  
 \item [{\rm (2)}] If $s_1^{\sigma_1} <_* s_2^{\sigma_2}$, then $s_1 \leq s_2$. Notice that the strict inequality $s_1 < s_2$ fails in general.
    \item [{\rm (3)}]  If $s_1^{\sigma_1} <_*  t_1^{\tau_1}$ and $s_2^{\sigma_2} \leq_*  t_2^{\tau_2}$, then we have $ s_1^{\sigma_1}+ s_2^{\sigma_2} \leq_* t_1^{\tau_1}+  t_2^{\tau_2}$. Notice that the strict inequality $ s_1^{\sigma_1}+ s_2^{\sigma_2} <_* t_1^{\tau_1}+  t_2^{\tau_2}$ fails in general.
\end{enumerate}

We next recall intervals in $\LL$. Any interval in $\LL$ is described by a pair of decorated numbers $\langle s^\sigma, t^\tau \rangle_\LL
$ with  $s^\sigma, t^\tau \notin \omR^* \setminus \{+\infty^+\}$. For our purpose, we set $\pm \infty + t :=\pm \infty$ and $ t \pm \infty := \pm \infty$ for $t\in \mR$.
\begin{lemma}\label{lem:add_dec}
     Let  $S =  \langle s_1^{\sigma_1}, s_2^{\sigma_2}  \rangle_\LL$ and $T = \langle  t_1^{\tau_1},t_2^{\tau_2} \rangle_\LL$ be intervals in $\LL$. We have the following statements.
    \begin{enumerate}
   \item [{\rm (1)}] We have $S \subseteq T$ if and only if both  $s_1^{\sigma_1} \leq_* t_1^{\tau_1}$ and $s_2^{\sigma_2} \leq_* t_2^{\tau_2}$ hold.
    \item [{\rm (2)}] For any $\epsilon \geq0$, we have  $\Lambda_{-\epsilon}(S) =  \langle  (s_1- \epsilon)^{\sigma_1},(s_2 -\epsilon)^{\sigma_2} \rangle_\LL$.
    \end{enumerate}
\end{lemma}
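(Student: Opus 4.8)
The plan is to reduce both statements to branchwise assertions about intervals of $\omR$ and then invoke the standard dictionary between half-open intervals $[-\infty,u^\upsilon)$ of $\omR$ and decorated numbers $u^\upsilon$. Write $B_i := (\mR\times\{i\})\cup\{-\infty\}$ for $i=1,2$, and identify $B_i$ with $\omR\setminus\{+\infty\}=[-\infty,+\infty)$ by forgetting the branch label. By the definition of the notation $\langle\cdot,\cdot\rangle_\LL$ recalled in Subsection~\ref{subsec:TheBipathPoset}, for $S=\langle s_1^{\sigma_1},s_2^{\sigma_2}\rangle_\LL$ the restriction $S\cap B_i$ is exactly the interval $[-\infty,s_i^{\sigma_i})$ of $\omR$ (well-defined since $s_i^{\sigma_i}\in\omR^*\setminus\{+\infty^+\}$), and $-\infty\in S\cap B_i$ always; likewise for $T$. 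Since $S=(S\cap B_1)\cup(S\cap B_2)$, the two branches of $B$ meet only in $-\infty$, and $+\infty\notin S,T$, containment of $\LL$-intervals is tested branchwise: $S\subseteq T$ if and only if $S\cap B_i\subseteq T\cap B_i$ for $i=1,2$.

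For part (1) it then suffices to show that $[-\infty,s^\sigma)\subseteq[-\infty,t^\tau)$ in $\omR$ if and only if $s^\sigma\leq_* t^\tau$. This is immediate by inspection: comparing the cases $s<t$, $s=t$, $s>t$ against the four possibilities for $(\sigma,\tau)$ reproduces exactly the total order $\leq_*$ defined in Subsection~\ref{subsec:TheBipathPoset} (one could equivalently quote Lemma~\ref{lem:well-def} applied to interval modules over $\omR$, or the standard theory of decorated endpoints).

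For part (2), recall that $\Lambda_{-\epsilon}=\Lambda_\epsilon^{-1}$ acts on $B$ by $(r,i)\mapsto(r-\epsilon,i)$ and fixes $\pm\infty$; in particular it preserves each $B_i$ and acts there, under the above identification, as $r\mapsto r-\epsilon$ on $\mR$ while fixing $-\infty$. Hence $\Lambda_{-\epsilon}(S)\cap B_i=\Lambda_{-\epsilon}(S\cap B_i)$ is the image of $[-\infty,s_i^{\sigma_i})$ under $r\mapsto r-\epsilon$. A real translation preserves whether the right endpoint belongs to the interval, hence its decoration, so this image is $[-\infty,(s_i-\epsilon)^{\sigma_i})$; the convention $\pm\infty-\epsilon:=\pm\infty$ stated just before the lemma makes this hold verbatim when $s_i\in\{\pm\infty\}$ as well. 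Since $\Lambda_{-\epsilon}(S)\in\LL$ by Lemmas~\ref{lem:type_shiftfunctor} and~\ref{lem:support_interval}, reading off its two branch restrictions yields $\Lambda_{-\epsilon}(S)=\langle(s_1-\epsilon)^{\sigma_1},(s_2-\epsilon)^{\sigma_2}\rangle_\LL$.

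The main obstacle is purely bookkeeping: one must check that the branchwise reduction loses nothing at the junction point $-\infty$ (it does not, since every $\LL$-interval contains $-\infty$, and the two branches are otherwise disjoint), and that the boundary decorations behave correctly in the extreme cases $s_i=+\infty$ (forcing $\sigma_i=-$, so the branch is the half-line $[-\infty,+\infty)$) and $s_i=-\infty$ (forcing $\sigma_i=+$, so the branch is $\{-\infty\}$) — both handled uniformly by the convention $\pm\infty+t:=\pm\infty$. I expect this edge-case verification, rather than the main line of argument, to be the only place that needs care.
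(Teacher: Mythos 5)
Your proof is correct and takes essentially the same approach as the paper, which simply states that the claims follow from $S,T\in\LL$ and the definition of $\Lambda_\epsilon$; you have merely spelled out the branchwise reduction that the paper treats as immediate. One cosmetic point: given the paper's convention $\langle s^\sigma,t^\tau\rangle_\LL = [-\infty,t^\tau)\times\{1\}\cup[-\infty,s^\sigma)\times\{2\}$, your identification ``$S\cap B_i = [-\infty,s_i^{\sigma_i})$'' has the branch indices crossed, but since the two branchwise inequalities appear symmetrically in the conclusion this has no effect on the argument.
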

\begin{proof}
    (1) follows from $S,T \in \LL$. (2) is clear from the definition of the translation $\Lambda_\epsilon$.
\end{proof}

 We give a preorder on $\LL$.  Recall that $\LL$ is divided into ${\LL_i}$ $ (i=1,2,3,4,5)$. Using the division, we define a map $\ell  \colon \LL \to \overline{\mR}^* \colon$ 
 \begin{equation*}
     \ell (\langle s^{\sigma},t^{\tau}\rangle_\LL)\coloneq \begin{cases}
         - \infty^-  & \text{if } \langle s^{\sigma},t^{\tau}\rangle_\LL \in \LL_1, \\
         \max \{ s^\sigma, t^\tau \} & \text{if }  \langle s^{\sigma},t^{\tau}\rangle_\LL \in \LL_2 , \\
           s^\sigma + t^\tau & \text{if }  \langle s^{\sigma},t^{\tau}\rangle_\LL \in \LL_3 , \\
             \min \{ s^\sigma, t^\tau \} & \text{if }  \langle s^{\sigma},t^{\tau}\rangle_\LL \in \LL_4 , \\
         +\infty^- & \text{if }   \langle s^\sigma ,t^\tau \rangle_\LL \in \LL_5.
     \end{cases}
 \end{equation*} 
For intervals $I, J \in  \LL$, we write $I \leq_\ell J$ if  
\begin{itemize}
    \item $I \in \LL_i$ and $J \in \LL_j$ with $i<j$ for $i,j\in \{1,2,3,4,5\}$, or
    \item $I, J \in \LL_i$ for some $i\in \{1,2,3,4,5\}$ with $\ell(I) \leq_* \ell(J)$.  
\end{itemize}
Then, $\leq_\ell$ defines a preorder on $\LL$. Indeed, it
is equipped with reflexivity ($I \leq_\ell I$) and transitivity ($I \leq_\ell I'$ and  $I' \leq_\ell  I''$ implies $I \leq_\ell I''$). Also, we note that any intervals in $\LL$ are comparable by this preorder.

\begin{lemma}\label{lem:T<R<S<T}    Let $R$, $S$, and $T$ be intervals in $\LL$. We have the following:
\begin{enumerate}
    \item [{\rm (1)}]$R \subseteq S$ implies $R \leq_\ell S$.
    \item [{\rm (2)}] For any  $\epsilon \geq 0$ and $i\in \{1,2,3,4,5\}$, if $R$ in $\LL_i$, then $\Lambda_{-\epsilon}(R)\in \LL_i$.
    \item [{\rm (3)}] If we have $R \leq_\ell S$ and $S \leq_\ell T$ with $R, T \in \LL_i$ for some $i\in \{1,2,3,4,5\}$, then $S\in \LL_i$. 
\end{enumerate}
\end{lemma}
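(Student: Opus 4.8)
The three statements are all elementary once we unwind the definitions of $\leq_\ell$, the map $\ell$, and the translation $\Lambda_{-\epsilon}$. I would handle them in the order given, since (2) is a prerequisite for the clean statement of several later arguments and (3) is a purely combinatorial fact about the preorder.

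For (1), suppose $R \subseteq S$ with $R,S \in \LL$. Write $R = \langle r_1^{\rho_1}, r_2^{\rho_2}\rangle_\LL$ and $S = \langle s_1^{\sigma_1}, s_2^{\sigma_2}\rangle_\LL$. By Lemma~\ref{lem:add_dec}(1), $R\subseteq S$ is equivalent to $r_1^{\rho_1}\leq_* s_1^{\sigma_1}$ and $r_2^{\rho_2}\leq_* s_2^{\sigma_2}$. I would then argue by cases on which $\LL_i$ contains $R$. First I would check that $R\subseteq S$ forces the index $i$ of $R$ to be $\leq$ the index $j$ of $S$: the defining constraints on the decorated endpoints (e.g.\ a member of $\LL_1$ has both endpoints $-\infty^+$, a member of $\LL_3$ has both endpoints in $\mR^*$, a member of $\LL_5$ has both endpoints $+\infty^-$, and $\LL_2,\LL_4$ are the mixed boundary cases) are monotone under the coordinatewise order $\leq_*$, so $r_k^{\rho_k}\leq_* s_k^{\rho_k}$ cannot move $R$ to a strictly smaller-indexed stratum. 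If $i<j$ we are done by definition of $\leq_\ell$. If $i=j$, I must show $\ell(R)\leq_* \ell(S)$: for $i\in\{1,5\}$ both sides are constant so this is trivial; for $i=2$, $\ell$ is the coordinatewise max and $\max$ is monotone under $\leq_*$; for $i=4$, $\ell$ is the coordinatewise min, again monotone; for $i=3$, $\ell$ is the sum $s_1^{\sigma_1}+s_2^{\sigma_2}$ of decorated numbers, and monotonicity of this addition under $\leq_*$ is exactly property~(3) of the addition recorded just before Lemma~\ref{lem:add_dec} (using $s_k^{\sigma_k}\leq_* t_k^{\tau_k}$ in both coordinates, with at least one strictness absorbed or not as needed — but here we only need the non-strict conclusion, so property~(3) with the weak hypothesis in one coordinate suffices, or one simply notes the weak monotonicity directly).

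For (2), fix $R\in \LL_i$ and $\epsilon\geq 0$. By Lemma~\ref{lem:add_dec}(2), if $R = \langle s_1^{\sigma_1},s_2^{\sigma_2}\rangle_\LL$ then $\Lambda_{-\epsilon}(R) = \langle (s_1-\epsilon)^{\sigma_1},(s_2-\epsilon)^{\sigma_2}\rangle_\LL$ (with the convention $\pm\infty - \epsilon = \pm\infty$). Subtracting a finite $\epsilon$ from a real endpoint keeps it in $\mR^*$ and does not change its decoration, and it fixes the endpoints $\pm\infty^+$; hence the pattern of which endpoints are $-\infty^+$, which lie in $\mR^*$, and which are $+\infty^-$ is unchanged, so $\Lambda_{-\epsilon}(R)$ lies in the same $\LL_i$. (One should also note $\Lambda_{-\epsilon}(R)$ really is an interval of $B$, which is Lemma~\ref{lem:support_interval}.)

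For (3), suppose $R\leq_\ell S$, $S\leq_\ell T$, and $R,T\in\LL_i$. Let $j$ be the index with $S\in\LL_j$. The first bullet of the definition of $\leq_\ell$ gives $i\leq j$ from $R\leq_\ell S$ and $j\leq i$ from $S\leq_\ell T$ (in each case, either the indices are strictly ordered as stated, or they are equal, which still gives the required weak inequality). Hence $i=j$, i.e.\ $S\in\LL_i$, as claimed. I do not expect any genuine obstacle here; the only place requiring a little care is the stratum-monotonicity claim inside part~(1), and I would make that airtight by simply enumerating the five strata and checking the coordinatewise-$\leq_*$-monotone characterization of each, which is a short finite verification.
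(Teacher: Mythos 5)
Your proof is correct, and it takes exactly the route the paper intends but leaves implicit: the paper's own proof of this lemma is the single sentence that the claims follow immediately from the definitions of $\leq_\ell$ and the $\LL_i$. Your stratum-monotonicity check for (1) (using Lemma~\ref{lem:add_dec}(1) and the fact that the class of each decorated endpoint among $\{-\infty^+\}$, $\mR^*$, $\{+\infty^-\}$ is monotone under $\leq_*$), together with monotonicity of $\max$, $+$, $\min$ for $\ell$, and the elementary observations for (2) and (3), are precisely the checks being elided; there are no gaps. One tiny slip worth fixing if you write this out: in (2) the endpoints fixed by $\Lambda_{-\epsilon}$ are $-\infty^+$ and $+\infty^-$ (the only infinite endpoints occurring in $\LL$), not ``$\pm\infty^+$.''
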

\begin{proof}
    We can obtain these claims immediately from the definitions of $\leq_\ell$ and $\LL_i$ for each $i=1,2,3,4$, and $5$.
\end{proof}

\begin{lemma}\label{lem:equiv}
For intervals $S, T \in \LL_i$ with $i\in \{ 2,4\}$, the following statements are equivalent.
\begin{enumerate}
    \item [\rm{(i)}] $S \subseteq T$.
    \item [\rm{(ii)}] $S \leq_\ell T$ and there exists $\epsilon \geq 0$ such that $\Lambda_{-\epsilon}(T) \subseteq S$.
\end{enumerate}
\end{lemma}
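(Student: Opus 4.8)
The implication (i) $\Rightarrow$ (ii) is the easy direction: if $S \subseteq T$ then $S \leq_\ell T$ by Lemma~\ref{lem:T<R<S<T}(1), and since $S, T \in \LL_i$ with $i \in \{2,4\}$ the interval $T$ has an endpoint equal to $\pm\infty^\mp$ (at $-\infty^+$ for $i=2$, at $+\infty^-$ for $i=4$) and one finite decorated endpoint; concretely I would write $S = \langle s_1^{\sigma_1}, s_2^{\sigma_2}\rangle_\LL$ and $T = \langle t_1^{\tau_1}, t_2^{\tau_2}\rangle_\LL$, where one coordinate (say the one that is $\pm\infty$) agrees on both, and on the finite coordinate Lemma~\ref{lem:add_dec}(1) gives $s_j^{\sigma_j} \leq_* t_j^{\tau_j}$. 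Then translating $T$ downward by a large enough $\epsilon$ — using Lemma~\ref{lem:add_dec}(2), which pushes the finite coordinate of $T$ to $(t_j - \epsilon)^{\tau_j}$ while fixing the infinite coordinate — makes $\Lambda_{-\epsilon}(T)$ sit inside $S$, again checked coordinatewise via Lemma~\ref{lem:add_dec}(1). Note that $\Lambda_{-\epsilon}(T) \in \LL_i$ by Lemma~\ref{lem:T<R<S<T}(2), so this comparison makes sense.

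For (ii) $\Rightarrow$ (i): assume $S \leq_\ell T$ and $\Lambda_{-\epsilon}(T) \subseteq S$ for some $\epsilon \geq 0$. By the easy direction applied to $\Lambda_{-\epsilon}(T) \subseteq S$ we get $\Lambda_{-\epsilon}(T) \leq_\ell S$; combined with $S \leq_\ell T$ and Lemma~\ref{lem:T<R<S<T}(3) (together with Lemma~\ref{lem:T<R<S<T}(2) to know $\Lambda_{-\epsilon}(T) \in \LL_i$), this places everything in the same $\LL_i$, so coordinatewise descriptions are uniform. Now I would argue coordinate by coordinate: on the coordinate where $T$ (and hence $S$ and $\Lambda_{-\epsilon}(T)$) takes the value $\pm\infty^\mp$, both $S$ and $T$ agree automatically. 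On the finite coordinate, write the relevant endpoints as $s^\sigma$ for $S$ and $t^\tau$ for $T$. From $\Lambda_{-\epsilon}(T) \subseteq S$ we get $(t-\epsilon)^\tau \leq_* s^\sigma$ (for $\LL_2$-type, where the noninfinite endpoints are \emph{upper} bounds, so $S \subseteq T$ needs $s^\sigma \leq_* t^\tau$) or the reversed inequality (for $\LL_4$), and from $S \leq_\ell T$ we get $\ell(S) \leq_* \ell(T)$, which for $i \in \{2,4\}$ unwinds precisely to $s^\sigma \leq_* t^\tau$ (in the $\LL_2$ case $\ell$ is $\max$ of the two endpoints, one of which is $-\infty^+$, so $\ell(S) = s^\sigma$; similarly $\ell$ is $\min$ for $\LL_4$). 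Hence $s^\sigma \leq_* t^\tau$, which by Lemma~\ref{lem:add_dec}(1) is exactly $S \subseteq T$.

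The main subtlety — not really an obstacle, but the place where care is needed — is bookkeeping the difference between $\LL_2$ and $\LL_4$: in $\LL_2$ the finite decorated endpoints are ``top'' endpoints (so containment means the smaller decorated number), whereas in $\LL_4$ they are ``bottom'' endpoints (containment means the larger decorated number), and correspondingly $\ell$ is defined via $\max$ in one case and $\min$ in the other. Once one observes that in both cases the single nontrivial coordinate is governed by $\ell$ and that $\Lambda_{-\epsilon}$ only moves that coordinate (Lemma~\ref{lem:add_dec}(2)), the equivalence falls out; I would present the two cases in parallel, pointing out that they are mirror images. I expect the entire argument to be short once the coordinatewise reformulations of $\subseteq$, $\leq_\ell$, and $\Lambda_{-\epsilon}$ from Lemmas~\ref{lem:add_dec} and~\ref{lem:T<R<S<T} are in hand.
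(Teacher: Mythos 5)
Your overall approach matches the paper's: both reduce the statement to a coordinatewise comparison of the two decorated endpoints, using the fact that one coordinate is pinned to $\mp\infty^\pm$ and $\ell$ picks out the finite one. Two things in your write-up are off, however, and both are worth fixing.

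\textbf{The claimed sign reversal for $\LL_4$ is wrong.} You assert that in $\LL_4$ the finite decorated endpoint is a ``bottom'' endpoint, so containment means the \emph{larger} decorated number. But every interval in $\LL$ contains $-\infty$ and extends upward on each branch; in $\LL_4$ one branch is $[-\infty,+\infty)$ and the other is $[-\infty,s^\sigma)$, so the finite endpoint is an \emph{upper} bound exactly as in $\LL_2$, and $S\subseteq T$ on the finite coordinate again reads $s^\sigma \leq_* t^\tau$. This is precisely what Lemma~\ref{lem:add_dec}(1), which you invoke at the end, says. As written, your parenthetical ``or the reversed inequality (for $\LL_4$)'' contradicts the conclusion you draw from Lemma~\ref{lem:add_dec}(1); if the reversal were real, deducing $s^\sigma\leq_* t^\tau$ from $S\leq_\ell T$ would \emph{not} give $S\subseteq T$ in the $\LL_4$ case. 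You arrive at the correct conclusion only because your last step implicitly discards the (false) reversal. The genuine duality between $\LL_2$ and $\LL_4$ is in which coordinate is frozen ($-\infty^+$ versus $+\infty^-$) and correspondingly whether $\ell$ is a $\max$ or a $\min$, not in the direction of the inclusion inequality.

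\textbf{The same-branch reduction is asserted, not proved.} You write ``on the coordinate where $T$ (and hence $S$ and $\Lambda_{-\epsilon}(T)$) takes the value $\pm\infty^\mp$,'' but the preceding line only establishes that $S$, $T$, and $\Lambda_{-\epsilon}(T)$ all lie in the same $\LL_i$, and $\LL_i$ (for $i\in\{2,4\}$) consists of \emph{two} incomparable families, one for each branch of $B$. Being in $\LL_i$ together does not force the $\pm\infty^\mp$-coordinate onto the same side. The point of the hypothesis $\Lambda_{-\epsilon}(T)\subseteq S$ — and this is exactly the observation the paper makes explicit (``there are no inclusion relations between $\langle s^\sigma,-\infty^+\rangle_\LL$ and $\langle -\infty^+,t^\tau\rangle_\LL$'') — is that an inclusion between intervals of $\LL_i$ forces them onto the same branch; since $\Lambda_{-\epsilon}$ fixes $\pm\infty^\mp$, $\Lambda_{-\epsilon}(T)$ is on $T$'s branch, whence $S$ is too. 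Without this step, $S\leq_\ell T$ alone can hold across branches (the preorder $\leq_\ell$ only compares the values of $\ell$) while $S\not\subseteq T$, so the whole implication (ii) $\Rightarrow$ (i) collapses. Make the cross-branch non-inclusion explicit and the rest of your argument goes through cleanly.
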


\begin{proof}
We prove the case for $i=2$; the case for $i=4$ is shown in a similar discussion.
Recall that intervals in $\LL_2$ are of the form  $\langle s^\sigma ,-\infty^+ \rangle_\LL$ or $\langle -\infty^+, t^\tau \rangle_\LL$ for some $s^\sigma, t^\tau \in \mR^*$. In addition, there are no inclusion relations between $\langle s^\sigma ,-\infty^+ \rangle_\LL$ and $\langle -\infty^+, t^\tau \rangle_\LL$.

Let $S, T \in \LL_2$.  We have the following equivalences. 
\begin{equation*}
    \begin{split}
    S \subseteq T. 
    &\iff \text{The pair }
    (S,T) \text{ is either } ( \langle   -\infty^+  , s^\sigma \rangle_\LL, \langle -\infty^+, t^\tau \rangle_\LL)  \text{ or } (\langle s^\sigma, -\infty^+ \rangle_\LL, \langle t^\tau,  -\infty^+ \rangle_\LL)  \\
    & \ \ \ \ \ \quad \text{ for some } s^\sigma \leq_* t^\tau \in \mR^*.   
\\
   &\iff \text{The pair }
    (S,T) \text{ is either } ( \langle   -\infty^+  , s^\sigma \rangle_\LL, \langle -\infty^+, t^\tau \rangle_\LL)  \text{ or } (\langle s^\sigma, -\infty^+ \rangle_\LL, \langle t^\tau,  -\infty^+ \rangle_\LL)  \\
    & \ \ \ \ \ \quad  \text{ for some } s^\sigma, t^\tau \in \mR^*  \text{ with } S\leq_\ell T. 
    \\
     & \iff 
     S \leq_\ell T \text{ and there exists } \epsilon \geq 0 \text{ such that } \Lambda_{-\epsilon}(T) \subseteq S.
    \end{split}
\end{equation*}
 We obtain the desired results. 
\end{proof}

The next observation is a key to Lemma~\ref{lem:final}(2). 

\begin{lemma}\label{lem:interleaved-RST}
Let $R$, $S$, and $T$ be intervals in $\LL$ such that
    $R \leq_\ell T $. If there exist nonzero morphisms $k_R \to k_{S}(\epsilon)$ and $k_S \to  k_T (\epsilon) $, then $k_S$ is $\Lambda_\epsilon$-interleaved with either $k_R$ or $k_T$. 
\end{lemma}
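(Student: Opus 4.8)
The plan is to exploit the existence of nonzero morphisms between interval modules in $\LL$, together with Lemma~\ref{lem:mapsLR} and the ordering $\leq_\ell$, to pin down the shape of $S$ relative to $R$ and $T$, and then invoke the equivalence of conditions (i)–(iv) in Lemma~\ref{lem:aaa}(2). First, since $k_R \to k_S(\epsilon)$ is nonzero, Lemmas~\ref{lem:support_interval} and~\ref{lem:mapsLR} give $\Lambda_{-\epsilon}(S) \subseteq R$; similarly, the nonzero morphism $k_S \to k_T(\epsilon)$ gives $\Lambda_{-\epsilon}(T) \subseteq S$. Applying Lemma~\ref{lem:T<R<S<T}(1) to these inclusions yields $\Lambda_{-\epsilon}(S) \leq_\ell R$ and $\Lambda_{-\epsilon}(T) \leq_\ell S$, and combining with $R \leq_\ell T$ and transitivity of $\leq_\ell$ gives a chain of $\leq_\ell$-comparisons among $R$, $S$, $T$, $\Lambda_{-\epsilon}(S)$, and $\Lambda_{-\epsilon}(T)$.

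Next I would split into cases according to which piece $\LL_i$ contains $R$ and $T$. If $R$ and $T$ lie in the same $\LL_i$, then since $\Lambda_{-\epsilon}(T) \leq_\ell S \leq_\ell T$ (the right inequality from $\Lambda_{-\epsilon}(S)\subseteq R \leq_\ell T$ plus monotonicity, or more directly from the inclusions obtained) and $\Lambda_{-\epsilon}(T), T \in \LL_i$ by Lemma~\ref{lem:T<R<S<T}(2), Lemma~\ref{lem:T<R<S<T}(3) forces $S \in \LL_i$ as well. Then all three intervals and the relevant translates live in a single $\LL_i$; for $i \in \{1,5\}$ they are all equal to a single interval (or its translate, which by Lemma~\ref{lem:support_interval} is again that interval since $-\infty, +\infty$ are fixed), so the interleaving is trivial to produce; for $i=3$ the values $\ell(\cdot)$ are genuine decorated real numbers and the addition rules let me estimate; for $i \in \{2,4\}$ I would use Lemma~\ref{lem:equiv} to convert the $\leq_\ell$-inequalities and a single $\Lambda_{-\epsilon}$-inclusion into an honest inclusion of intervals, and then check condition (iii) or (iv) of Lemma~\ref{lem:aaa}(2). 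If instead $R \in \LL_i$ and $T \in \LL_j$ with $i < j$, then $S$ lies in some $\LL_k$ with $i \le k \le j$, and I would argue that whichever side of $k$ is closest determines whether $k_S$ interleaves with $k_R$ (when $k$ is "close to" $i$, using $\Lambda_{-\epsilon}(S)\subseteq R$) or with $k_T$ (when $k$ is "close to" $j$, using $\Lambda_{-\epsilon}(T)\subseteq S$); the key quantitative input is that the $\leq_\ell$-chain, together with the fact that the two nonzero morphisms factor the length-$2\epsilon$ structure map, bounds the "$\ell$-gap" between $R$ and $S$, or between $S$ and $T$, by something controlled by $\epsilon$.

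Concretely, I expect the cleanest route is: from $\Lambda_{-\epsilon}(S)\subseteq R$ and $\Lambda_{-\epsilon}(T)\subseteq S$, together with $R \leq_\ell T$, deduce $\Lambda_{-2\epsilon}(T) \subseteq \Lambda_{-\epsilon}(S) \subseteq R \subseteq$ (something $\leq_\ell T$), which sandwiches $R$ between $\Lambda_{-2\epsilon}(T)$ and a translate of $T$; if this already gives $\Lambda_{-\epsilon}(T) \subseteq R \subseteq \Lambda_\epsilon(T)$ then $k_R$ and $k_T$ are $\Lambda_\epsilon$-interleaved by Lemma~\ref{lem:aaa}(2)(iii); otherwise the failure must be "absorbed" on the $R$–$S$ side, giving $\Lambda_{-\epsilon}(S)\subseteq R \subseteq \Lambda_\epsilon(S)$, i.e. $k_R$ and $k_S$ interleaved — wait, the claim is about $k_S$, so I would rather phrase it as: either $\Lambda_{-\epsilon}(I)\subseteq S \subseteq \Lambda_\epsilon(I)$ holds for $I = R$ or for $I = T$. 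The main obstacle will be handling the boundary pieces $\LL_2$ and $\LL_4$ carefully, where the map $\ell$ uses $\max$ or $\min$ of the two decorated coordinates and is not injective, so a $\leq_\ell$-inequality does not immediately give a coordinatewise inequality; here Lemma~\ref{lem:equiv} is exactly the tool designed to bridge that gap, and the delicate point is checking that its hypothesis (existence of \emph{some} $\epsilon' \ge 0$ with $\Lambda_{-\epsilon'}(T)\subseteq S$) is met with the \emph{same} $\epsilon$ coming from the given interleaving data, so that the resulting inclusion is tight enough to conclude the $\Lambda_\epsilon$-interleaving rather than merely a $\Lambda_{\epsilon'}$-interleaving for some larger $\epsilon'$. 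I would therefore spend most of the care tracking the decorations $\sigma,\tau$ through the addition/$\max$/$\min$ operations, using the three listed properties of decorated addition.
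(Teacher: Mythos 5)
Your general plan is aligned with the paper's strategy --- use Lemmas~\ref{lem:support_interval} and~\ref{lem:mapsLR} to convert the nonzero morphisms into inclusions $\Lambda_{-2\epsilon}(T) \subseteq \Lambda_{-\epsilon}(S) \subseteq R$, push these through $\leq_\ell$, split by $\LL_i$, handle $\LL_1,\LL_5$ trivially, and use Lemma~\ref{lem:equiv} for $\LL_2,\LL_4$. But there are two real gaps.

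First, you treat ``$R \in \LL_i$, $T \in \LL_j$ with $i < j$'' as a live case and sketch a vague ``whichever side is closer'' argument for it. In fact this case is vacuous: from $\Lambda_{-2\epsilon}(T) \leq_\ell \Lambda_{-\epsilon}(S) \leq_\ell R \leq_\ell T$ together with Lemma~\ref{lem:T<R<S<T}(2) (translation preserves $\LL_i$) and Lemma~\ref{lem:T<R<S<T}(3) (sandwiching between two elements of the same $\LL_i$ forces membership in that $\LL_i$), one sees that $R$, $S$, and $T$ \emph{all} lie in the same $\LL_i$. Failing to extract this means you are reasoning about a branch that cannot occur, and the argument you propose for it (``the $\ell$-gap is bounded by $\epsilon$'') is not actually an argument. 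Relatedly, your claimed inequality $S \leq_\ell T$ does not follow from $\Lambda_{-\epsilon}(S) \leq_\ell T$ --- in $\LL_3$ this only gives $\ell(S) - 2\epsilon \leq_* \ell(T)$, not $\ell(S) \leq_* \ell(T)$ --- and the paper never asserts it.

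Second, and more seriously, the case $\LL_3$ is not proved; you only gesture (``the addition rules let me estimate''). This is the crux of the lemma and the paper's argument there is delicate: it proceeds by contradiction, assuming $k_S$ is interleaved with neither $k_R$ nor $k_T$, extracts via Lemma~\ref{lem:aaa}(2) and Lemma~\ref{lem:add_dec} the strict coordinate inequalities $s_i^{\sigma_i} <_* (r_i-\epsilon)^{\rho_i}$ and $t_p^{\tau_p} <_* (s_p-\epsilon)^{\sigma_p}$ (for some coordinates $i,p$), combines them with the coordinatewise consequences of $\Lambda_{-2\epsilon}(T)\subseteq \Lambda_{-\epsilon}(S)\subseteq R$, and shows that $\ell(R)=\ell(S)=\ell(T)$ forces an impossible configuration of decorations. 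Your ``cleanest route'' paragraph also veers off target: you set up to conclude that $k_R$ and $k_T$ are interleaved (via $\Lambda_{-\epsilon}(T)\subseteq R \subseteq \Lambda_\epsilon(T)$), which is not the statement of the lemma; you notice the mismatch but do not repair the plan. As it stands, the proposal would not yield a complete proof of the $\LL_3$ case, which is where the content of the lemma lives.
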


\begin{proof}Let $R$, $S$, and $T$ be intervals in $\LL$ such that
    $R \leq_\ell T$.
We first show that $R,S$, and $T$ belong to the same $\LL_i$ for some $i\in \{1,2,3,4,5\}$ as follows. By assumption, there exist nonzero morphisms $k_R \to k_{S}(\epsilon)$ and $k_S \to  k_T (\epsilon)$. Thus, by  Lemmas~ \ref{lem:support_interval} and \ref{lem:mapsLR}, we have 
\begin{equation}\label{eq:for-lemma}
    \Lambda_{-2\epsilon}(T) \subseteq \Lambda_{-\epsilon}(S)  \subseteq R.
\end{equation}
 In particular, by Lemma~\ref{lem:T<R<S<T}(1) and the assumption $R\leq_\ell T$, we have
\[
    \Lambda_{-2\epsilon}(T) \leq_\ell \Lambda_{-\epsilon}(S)  \leq_\ell R \leq_\ell T.
\]
Therefore, by Lemma~\ref{lem:T<R<S<T}(2)(3), if $T$ is in $\mathcal{L}_i$, then so are $R$ and $S$.

Using this observation, we prove the claim by considering each case where $T$ belongs to $\LL_i$ for $i\in \{1,2,3,4,5\}$. 

First, if $T$ is in $\LL_1$ or $\LL_5$, then we have $R= S=T$ since both $\LL_1$ and $\LL_5$ are singletons. Therefore, $k_S$ is $\Lambda_\epsilon$-interleaved with both $k_R$ and $k_T$.  

Next, let $T \in \LL_2$ (resp. $T \in \LL_4$). Now, we have $R \leq_\ell T$ by assumption and the inclusion relation $\Lambda_{-2\epsilon}(T) \subseteq R$ given in \eqref{eq:for-lemma}. Therefore, by Lemma~\ref{lem:equiv}, we have $R \subseteq T$. Combining it with \eqref{eq:for-lemma}, we have $\Lambda_{-\epsilon}(T) \subseteq S \subseteq \Lambda_\epsilon(T)$ and $\Lambda_{-\epsilon}(R) \subseteq S \subseteq \Lambda_\epsilon(R)$. By Lemma~\ref{lem:aaa}(2),  $k_S$ is $\Lambda_\epsilon$-interleaved with both $k_R$ and $k_T$.

Finally, we assume $T \in \LL_3$. We write $R$, $S$, and $T$ as $\langle r_1^{{\rho_1}} ,r_2^{{\rho_2}} \rangle_\LL$,   $\langle s_1^{{\sigma_1}} ,s_2^{{\sigma_2}} \rangle_\LL$, and  $\langle t_1^{{\tau_1}}  ,t_2^{{\tau_2}}  \rangle_\LL$ by pairs of decorated numbers, respectively. We show $k_S$ is $\Lambda_\epsilon$-interleaved with either $k_R$ or $k_T$ by contradiction.  

Assume that $k_S$ and $k_R$ are not $\Lambda_\epsilon$-interleaved, and  $k_S$ and $k_T$ are not $\Lambda_\epsilon$-interleaved. By Lemma~\ref{lem:aaa}(2), we have $\Lambda_{-\epsilon}(S) \nsubseteq R$ or $\Lambda_{-\epsilon}(R) \nsubseteq S$, and $\Lambda_{-\epsilon}(S) \nsubseteq T$ or $\Lambda_{-\epsilon}(T) \nsubseteq S$. 
By \eqref{eq:for-lemma}, we have $\Lambda_{-\epsilon}(R) \nsubseteq S$ and $\Lambda_{-\epsilon}(S) \nsubseteq T$. 
This implies that there exist $i$ and $p$ in $\{1,2\}$ such that 
\begin{equation}\label{eq:for-lemma2}
   s_i^{\sigma_i}   <_* (r_i-\epsilon)^{\rho_i} \text{ and } t_p^{\tau_p}  <_*  (s_p-\epsilon)^{\sigma_p},
\end{equation}
by Lemma \ref{lem:add_dec}(1).
 On the other hand, by $\Lambda_{-\epsilon}(T) \subseteq S$ and  $\Lambda_{-\epsilon}(S)  \subseteq R$ form \eqref{eq:for-lemma} and Lemma \ref{lem:add_dec}(2), we obtain 
\begin{equation}\label{eq:4eq}
\begin{split} 
(s_1 -\epsilon)^{\sigma_1} \leq_* r_1^{\rho_1} \text{ and }
     (s_2 -\epsilon)^{\sigma_2} \leq_* r_2^{\rho_2}, 
     \\
    (t_1-\epsilon)^{\tau_1} \leq_* s_1^{\sigma_1} \text{ and } (t_2-\epsilon)^{\tau_2} \leq_* s_2^{\sigma_2}.
\end{split}
\end{equation}
We take $j$ and $q$ from $\{1,2\}$ so that $i\neq j$ and $p\neq q$. For $j$ and $q$, we have
\begin{equation}\label{eq:for-lemma3}
    (s_j -\epsilon)^{\sigma_j} \leq_* r_j^{\rho_j} \text{ and } (t_q-\epsilon)^{\tau_q} \leq_* s_q^{\sigma_q},
\end{equation}
by \eqref{eq:4eq}.
In particular, we have 
\begin{equation}\label{eq:for-lemma5}
s_j-\epsilon \leq r_j \text{ and } t_q -\epsilon \leq s_q.    
\end{equation}

By \eqref{eq:for-lemma2} and \eqref{eq:for-lemma3}, we have 
\begin{equation*}
\begin{split}
s_1^{\sigma_1}  + s_2^{\sigma_2}  & \leq_* (r_i-\epsilon)^{\rho_i} +  (r_j + \epsilon)^{\rho_j} \\
&= \begin{cases}
    (r_1 + r_2)^+ & \text{if } (\rho_1,\rho_2)=(+,+) \\
    (r_1 + r_2)^- & \text{else,} 
\end{cases}  \\
& = r_1^{\rho_1}  + r_2^{\rho_2}.
\end{split}
\end{equation*}
Similarly, we obtain $t_1^{{\tau_1}} +t_2^{{\tau_2}} \leq_* s_1^{{\sigma_1}} + s_2^{{\sigma_2}}.$ 
On the other hand, we have $R \leq_\ell T$ by assumption. This is equivalent to $ r_1^{\rho_1}+ r_2^{\rho_2}  \leq_* t_1^{\tau_1}+ t_2^{\tau_2}$ by definition. Thus, we obtain 
\begin{equation}\label{eq:str*}
    s_1^{{\sigma_1}} + s_2^{{\sigma_2}} = r_1^{{\rho_1}} + r_2^{{\rho_2}} = t_1^{{\tau_2}} +t_2^{{\tau_2}}. 
\end{equation}
In particular, we have the equalities:
\begin{equation}\label{eq:triple_equality}
s_1 +s_2 =r_1 +r_2 =t_1 + t_2.    
\end{equation}

By $s_i^{\sigma_i}   <_* (r_i-\epsilon)^{\rho_i}$ from \eqref{eq:for-lemma2}, we have either $s_i < r_i- \epsilon $, or $ s_i= r_i- \epsilon $ with $  (\sigma_i,\rho_i) =(-,+)$. If the former inequality $s_i < r_i- \epsilon$ holds, then using $s_j-\epsilon \leq r_j$ from \eqref{eq:for-lemma5}, we obtain  $s_1 + s_2 < r_1 + r_2 $. This contradicts to \eqref{eq:triple_equality}. Therefore, we have $s_i = r_i- \epsilon $ with  $(\sigma_i,\rho_i) =(-,+)$. 
Then, we have $s_j -\epsilon=r_1 + r_2 - (s_i + \epsilon) = r_j$  and ${\rho_j} = -$ by \eqref{eq:str*}.
By substituting each for $(s_j -\epsilon)^{\sigma_j} \leq_* r_j^{\rho_j}$, we obtain $r_j^{\sigma_j} \leq_* r_j^{-}$,
 which implies ${\sigma_j} =-$. We note that $(\sigma_1, \sigma_2)=(-,-)$ holds.
By $t_p^{\tau_p}  <_*  (s_p-\epsilon)^{\sigma_p}$ from \eqref{eq:for-lemma2}, we have either $t_p < s_p- \epsilon$, or $t_p = s_p- \epsilon $ with $(\tau_p, \sigma_p)=(-,+)$. 
Since we have $(\sigma_1,\sigma_2)=(-,-)$, the latter fails, so we must have $t_p < s_p- \epsilon$. In this case, using $t_q -\epsilon \leq s_q$ from \eqref{eq:for-lemma5}, we obtain $t_1 +t_2 < s_1 +s_2$, which contradicts to \eqref{eq:triple_equality}. Thus, the former case also fails.  This is a contradiction. Therefore $k_S$ is $\Lambda_\epsilon$-interleaved with either $k_R$ or $k_T$.  This completes the proof.
\end{proof}

Using the above, we prove the following statement, which gives a proof of Lemma~\ref{lem:final}{\rm(2)} for the case of $\LL$. 
 \begin{proposition}\label{prop:hall}
 Let $V$ and $W$ be pfd bipath persistence modules. For $\epsilon\geq 0$, we have the following statements.
 \begin{enumerate}
     \item [{\rm(1)}]   $\mathscr{B}_{2\epsilon}(V^\LL)= \mathscr{B}_{}(V^\LL)$ and $\mathscr{B}_{2\epsilon}(W^\LL)= \mathscr{B}_{}(W^\LL)$.
        \item [{\rm(2)}]  
        Let $G=(\mathscr{B}(V^\LL),\mathscr{B}(W^\LL);E_{\mu_\epsilon^\Lambda})$ be the bipartite graph.
        If $V^\LL$ and $W^\LL$ are $\Lambda_\epsilon$-interleaved, then the full subgraphs $G(\mathscr{B}_{2\epsilon}(V^\LL), \mathscr{B}(W^\LL))$ and $G(\mathscr{B}(V^\LL), \mathscr{B}_{2\epsilon}(W^\LL))$ satisfy Conditions {\rm (H)} and  {\rm (H$'$)} (Definition \ref{condition}), respectively. 
 \end{enumerate}
 Consequently,
if $V^\LL$ and $W^\LL$ are $\Lambda_\epsilon$-interleaved, then there exists a bottleneck $\Lambda_\epsilon$-interleaving between $V^\LL$ and $W^\LL$.
\end{proposition}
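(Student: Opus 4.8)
The plan is to settle Part~(1) directly, to reduce Part~(2)---via the symmetry $V^\LL\leftrightarrow W^\LL$ and Proposition~\ref{thm:matching}---to a verification of Condition~(H) for a single bipartite graph, and to prove~(H) by a Hall-type argument built on the total preorder $\le_\ell$ on $\LL$ and on Lemma~\ref{lem:interleaved-RST}. For Part~(1): every interval in $\LL$ contains the minimal element $-\infty$ of $B$, a fixed point of $\Lambda_{2\epsilon}$, so Lemma~\ref{lem:aaa}(2) makes $k_I$ $\Lambda_{2\epsilon}$-significant for each $I\in\mathscr{B}(V^\LL)\cup\mathscr{B}(W^\LL)$; hence $\mathscr{B}_{2\epsilon}(V^\LL)=\mathscr{B}(V^\LL)$ and $\mathscr{B}_{2\epsilon}(W^\LL)=\mathscr{B}(W^\LL)$. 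In particular both full subgraphs named in~(2) coincide with $G=(\mathscr{B}(V^\LL),\mathscr{B}(W^\LL);E_{\mu_\epsilon^\Lambda})$, and since swapping $V^\LL$ and $W^\LL$ leaves $G$ unchanged while exchanging~(H) with~(H$'$), Part~(2) reduces to~(H): $|\mathscr{A}|\le|N_G(\mathscr{A})|$ for every finite $\mathscr{A}\subseteq\mathscr{B}(V^\LL)$. Granting~(H) and~(H$'$), the closing assertion is Proposition~\ref{thm:matching} applied to $V^\LL$ and $W^\LL$, whose neighbour-finiteness hypothesis is furnished by Lemma~\ref{lem:finness} and Part~(1).

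For~(H) I would fix a $\Lambda_\epsilon$-interleaving $(\alpha,\beta)$ between $V^\LL=\bigoplus_I k_I$ and $W^\LL=\bigoplus_J k_J$. By Lemma~\ref{lem:type_shiftfunctor} and Lemma~\ref{lem:T<R<S<T}(2) the shifts $V^\LL(\epsilon)$ and $W^\LL(\epsilon)$ are again direct sums of interval modules of type $\LL$, so $\alpha$ and $\beta$ become matrices $(\alpha_{J,I}\colon k_I\to k_J(\epsilon))$ and $(\beta_{I,J}\colon k_J\to k_I(\epsilon))$ of morphisms of interval modules, satisfying $\sum_J\beta_{I',J}(\epsilon)\circ\alpha_{J,I}=\delta_{I,I'}(k_I)_{0\to2\epsilon}$ together with the symmetric identities for $\alpha(\epsilon)\circ\beta$. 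By Lemma~\ref{lem:support_interval} and Lemma~\ref{lem:mapsLR}, a nonzero $\alpha_{J,I}$ forces $\Lambda_{-\epsilon}(J)\subseteq I$ and a nonzero $\beta_{I,J}$ forces $\Lambda_{-\epsilon}(I)\subseteq J$, so a nonzero diagonal term $\beta_{I,J}(\epsilon)\circ\alpha_{J,I}$ already gives $\Lambda_{-\epsilon}(I)\subseteq J\subseteq\Lambda_\epsilon(I)$, whence $k_I$ and $k_J$ are $\Lambda_\epsilon$-interleaved by Lemma~\ref{lem:aaa}(2); thus every vertex of $\mathscr{B}(V^\LL)$ has at least one neighbour in $G$. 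To upgrade this to~(H), I would follow the template of Bjerkevik's proof of the one-parameter algebraic stability theorem (the $\UU,\DD$ case invoked in Lemma~\ref{lem:final}(1)): suppose~(H) fails, pick a minimal finite violator $\mathscr{A}$, take a $G$-matching saturating $\mathscr{A}$ except one vertex $I_0$, and run an alternating path out of $I_0$; reading off the chain of nonzero entries of $\alpha$ and $\beta$ along it and combining the off-diagonal identities with the fact that a nonzero composition $k_R\to k_S(\epsilon)\to k_T(2\epsilon)$ with $R\le_\ell T$ makes $k_S$ interleaved with $k_R$ or $k_T$ (Lemma~\ref{lem:interleaved-RST}), one reroutes the path and contradicts minimality.

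I expect this final combinatorial step to be the main obstacle. Over $\mathbb{R}$ the rank invariant is complete and the interleaving matrices become triangular once the summands are ordered by their endpoints, so the count is essentially automatic; over $\LL$ the bipath structure couples the two rays---distinct $\LL$-modules can share a rank invariant---so there is no reduction to dimension counting at points of $B$, and the only handle on the interaction of the two rays is Lemma~\ref{lem:interleaved-RST} read through the preorder $\le_\ell$. Making the alternating-path bookkeeping work with just that input is the technical heart of the proof; Part~(1), the symmetry reduction, and the final passage through Proposition~\ref{thm:matching} are routine.
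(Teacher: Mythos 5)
Your handling of Part (1), the symmetry reduction $V^\LL\leftrightarrow W^\LL$, neighbour-finiteness via Lemma~\ref{lem:finness}, and the final passage through Proposition~\ref{thm:matching} are all correct and match the paper. You also correctly isolate Lemma~\ref{lem:interleaved-RST} as the key input. But the heart of the proof---verifying Condition~(H)---is exactly where you stop, and the alternating-path sketch you offer is not a proof: you yourself flag that ``making the alternating-path bookkeeping work with just that input is the technical heart'' and never close the argument.

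Moreover, the obstacle you perceive is illusory. You worry that ``over $\LL$ the bipath structure couples the two rays---distinct $\LL$-modules can share a rank invariant---so there is no reduction to dimension counting at points of $B$.'' But dimension counting at one point of $B$ is precisely what works, because every $I\in\LL$ contains the single element $-\infty$, which is fixed by $\Lambda$. The paper's argument: order $X=\{I_1,\dots,I_n\}\subseteq\mathscr{B}(V^\LL)$ so that $I_i\le_\ell I_{i'}$ for $i\le i'$, and observe via Lemma~\ref{lem:interleaved-RST} that for $i\le i'$ and any $J\notin\mu_\epsilon^\Lambda(X)$ the composition $\beta_{I_{i'},J}(\epsilon)\circ\alpha_{J,I_i}$ vanishes. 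The interleaving identities then show that the through-map
\[
\psi\colon\bigoplus_{i=1}^n k_{I_i}\xrightarrow{\;(\alpha_{J_j,I_i})\;}\bigoplus_{j=1}^m k_{J_j}(\epsilon)\xrightarrow{\;(\beta_{I_i,J_j}(\epsilon))\;}\bigoplus_{i=1}^n k_{I_i}(2\epsilon)
\]
(with $\{J_1,\dots,J_m\}=\mu_\epsilon^\Lambda(X)$) is upper triangular in the $\le_\ell$-order with diagonal entries $(k_{I_i})_{0\to2\epsilon}$. Evaluating at $-\infty$, each diagonal entry is $\id_k$ (since $-\infty$ is a $\Lambda$-fixed point and lies in every $I_i$), so $\psi_{-\infty}\colon k^n\to k^n$ is an isomorphism; since it factors through $k^m$, we get $n\le m$. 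No alternating path or minimal-counterexample device is needed. In short: the skeleton of your proof is right, the combinatorial reduction is right, and the key lemma is right, but the crucial Hall-condition verification is missing, and the reason you thought a direct rank argument could not work is mistaken.
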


\begin{proof}
The assertion (1) follows from Lemma~\ref{lem:aaa}(2).


We prove (2). 
We first note that we have $G(\mathscr{B}_{2\epsilon}(V^\LL), \mathscr{B}(W^\LL)) =G=G(\mathscr{B}(V^\LL), \mathscr{B}_{2\epsilon}(W^\LL))$ by (1). In addition, both $\mathscr{B}(V^\LL)$ and $\mathscr{B}(W^\LL)$ are finite since $V^\LL$ and $W^\LL$ are pfd bipath persistence modules and since any interval in both sets contains $-\infty\in B$.

 We show that the bipartite graph $G$ satisfies Condition (H), i.e., $|X|\leq |\mu_\epsilon^\Lambda (X)|$ for any $X\subseteq \mathscr{B}(V^\LL)$. 
 The assertion for (H$'$) can be obtained by a similar discussion. 

 Let $X$ be a subset of $\mathscr{B}(V^\LL)$. Let $n:= |X|$ and $m:= |\mu_\epsilon^\Lambda(X)|$, respectively.
Since any intervals in $\LL$ are comparable with respect to $\leq_\ell$ as mentioned below,  we order $X=\{I_1, \hdots , I_n\}$ so that $I_i \leq_\ell I_{i'}$ for $1\leq i \leq i' \leq n$. We let $\mu_\epsilon^\Lambda (X) = \{J_1,\hdots, J_m\}$.

Let $\alpha\colon V^\LL \to W^\LL(\epsilon)$ and $\beta \colon W^\LL \to V^\LL(\epsilon)$ be a $\Lambda_\epsilon$-interleaving between $V^\LL$ and $W^\LL$. We write $\alpha$ and $\beta$ by
 \[\alpha =(\alpha_{J,I})_{I \in \mathscr{B}(V^\LL), J \in \mathscr{B}(W^\LL)} \colon V^\LL \to  W^\LL(\epsilon) \text{ and } \beta =(\beta_{I,J})_{I \in \mathscr{B}(V^\LL), J \in \mathscr{B}(W^\LL)} \colon W^\LL\to V^\LL(\epsilon).\]
 Then, for any $I_i,I_{i'} \in X$ with $i\leq i'$ and $J \in \mathscr{B}(W^\LL) \setminus \mu_\epsilon^\Lambda (X) $, we obtain
 \begin{equation}\label{eq:makezero}
\beta_{I_{i'},J} (\epsilon) \circ \alpha_{J,I_i}  = 0    
 \end{equation}
by the following discussion. Suppose $\beta_{I_{i'},J} (\epsilon) \circ \alpha_{J,I_i}  \neq 0$. Then,  we have nonzero morphisms $\alpha_{J,I_i}\colon k_{I_i} \to k_J(\epsilon) $ and $\beta_{I_{i'},J} \colon k_J \to k_{I_{i'}}(\epsilon)$. Thus, by Lemma \ref{lem:interleaved-RST}, $k_J$ is $\Lambda_\epsilon$-interleaved with either $k_{I_i}$ or $k_{I_{i'}}$. This implies $J\in \mu_\epsilon^\Lambda (X)$, which  contradicts to the assumption $J\in \mathscr{B}(W^\LL)\setminus  \mu_\epsilon^\Lambda (X)$. 

Using  \eqref{eq:makezero}, for any $I_i\in X$, we have

\begin{equation}\label{eq:11111}
    \begin{split}
         (k_{I_i})_{0\to 2\epsilon} &=  \sum_{J \in \mathscr{B}(W^\LL)} \beta_{{I_i},J}(\epsilon) \circ      \alpha_{J_j,{I_i}} \\
    &=  \sum_{j =1 }^m \beta_{{I_i},J_j}(\epsilon) \circ      \alpha_{J_j,{I_i}} 
    +
    \sum_{J \in \mathscr{B}(W^\LL) \setminus \mu_\epsilon^\Lambda(X)} \beta_{{I_i},J}(\epsilon) \circ      \alpha_{J,{I_i}}\\
    &\overset{\eqref{eq:makezero}}{=}
    \sum_{j =1 }^m \beta_{{I_i},J_j}(\epsilon) \circ  \alpha_{J_j,{I_i}}.
    \end{split}
\end{equation}
Similarly, for $I_i, I_{i'}  \in X$ with $i < i'$, we have
\begin{equation}\label{eq:222222}
    \begin{split}
       0 & =  \sum_{J \in \mathscr{B}(W^\LL)} \beta_{I_{i'},J}(\epsilon) \circ      \alpha_{J,I_i} \\
    & =   \sum_{j=1}^m \beta_{I_{i'},J_j}(\epsilon) \circ      \alpha_{J_j,I_i} 
    +
    \sum_{J \in \mathscr{B}(W^\LL) \setminus \mu_\epsilon^\Lambda(X)} \beta_{I_{i'},J}(\epsilon) \circ      \alpha_{J,I_i}\\
     &\overset{\eqref{eq:makezero}}{=}
     \sum_{j=1}^m \beta_{I_{i'},J_j}(\epsilon) \circ      \alpha_{J_j,I_i}.      
    \end{split}
\end{equation}

 \def\objectstyle{\displaystyle}

We denote by $\psi$ the composition of $(\alpha_{J,I})_{J \in \mu_\epsilon^\Lambda(X), I \in X }$ and $(\beta_{I, J}(\epsilon))_{J \in \mu_\epsilon^\Lambda(X), I \in X }$, which is displayed by 
\begin{equation*}
\xymatrix{
(\bigoplus_{i =1}^{n} k_{I_i}) \ar@{->}[rr]^{\psi} \ar@{->}[rd] _{(\alpha_{J_j,I_i})_{J_j \in \mu_\epsilon^\Lambda(X), I_i \in X } }
&     \ar@{}[d]|{\circlearrowright} 
& (\bigoplus_{i =1}^{n} k_{I_i}(2\epsilon))  \\ 
& (\bigoplus_{j=1}^{m} k_{J_j} (\epsilon))  \ar@{->}[ru]_(.5){\quad (\beta_{I_i, J_j}(\epsilon))_{J_j \in \mu_\epsilon^\Lambda(X), I_i \in X }}.
& 
}  
\end{equation*}
By equations \eqref{eq:11111} and \eqref{eq:222222}, the morphism  $\psi$ is of the form with respect to the ordering $X=\{I_1,\hdots I_n\}$:
\begin{equation*}
 \psi=   \begin{bmatrix}
   (k_{I_1})_{0 \to 2\epsilon } & \ast & \cdots & \ast  \\
  0 &  (k_{I_2})_{0 \to 2\epsilon } & \cdots & \ast \\
  \vdots & \vdots &  \ddots &\vdots \\
  0 & 0 & \cdots &  (k_{I_n})_{0 \to 2\epsilon}
\end{bmatrix}.
\end{equation*}
Since every interval in $\LL$ contains the minimal element $-\infty \in B$,  the $k$-linear map 
\[
{\psi}_{-\infty} \colon (\bigoplus_{i =1}^{n} {k_{I_i}})_{-\infty }(\cong k^n) \to (\bigoplus_{i =1}^{n} k_{I_i}(2\epsilon))_{-\infty } (\cong k^n)\]
is an isomorphism as vector space. Thus, the $k$-linear map 
\[((\alpha_{J,I})_{J \in \mu_\epsilon^\Lambda(X), I \in X })_{-\infty} \colon (\bigoplus_{i= 1}^{n} {k_{I_i}})_{-\infty } (\cong k^{n}) \to   (\bigoplus_{j=1 }^{m} k_{J_j}(\epsilon))_{-\infty} (\cong k^{m}) \]
is injective. Therefore, we obtain the desired inequality \[ |X| = n\leq m  =|\mu_\epsilon^\Lambda(X)|. \]

The last claim is immediately followed by (2) and Proposition \ref{thm:matching}.
\end{proof}

We prove Lemma \ref{lem:final}(2) and conclude the assertion of  Lemma \ref{lem:final}.

\begin{proof}[Proof of Lemma \ref{lem:final}{\rm(2)}] 
As mentioned before, we only consider the case of $\LL$. In this case, by Proposition \ref{prop:hall}, we obtain the desired result. 
\end{proof}

\subsection{Proof of the isometry theorem of bipath persistence modules}\label{subsec:isometry_bipath} 
Finally, we prove Theorem \ref{thm:isometry}.

\begin{proof}[Proof of Theorem~\ref{thm:isometry}] Let $V$ and $W$ be pfd bipath persistence modules.  Recall that Theorem \ref{thm:isometry} claims that the following are equivalent. \begin{enumerate}
        \item[ {\rm (a)} ] There exists a $\Lambda_\epsilon$-matching between $\mathscr{B}(V)$ and $\mathscr{B}(W)$.
        \item[ {\rm (b)} ] There exists a bottleneck $\Lambda_\epsilon$-interleaving between $V$ and $W$.
        \item[  {\rm (c)} ] There exists a $\Lambda_\epsilon$-interleaving between $V$ and $W$.
    \end{enumerate}
By Proposition \ref{prop:dI<dB}, we have (b) $\Rightarrow$ (c).
By Proposition~\ref{lem:inequality}, we have $\text{(a) $\Rightarrow$ (b)}$. In addition, the implications (b) $\Rightarrow$ (a) and (c) $\Rightarrow$ (b) have been proven in  Subsection~\ref{subsec:mor_interval} and Subsection~\ref{subsec:AST}, respectively.
 This completes the proof.
\end{proof}

\section*{Acknowledgement}

The author would like to thank his Ph.D. supervisor Emerson Gaw Escolar for fruitful discussions.
He would also appreciate Toshitaka Aoki for taking the time to discuss this paper and carefully reading the draft.

This work is supported by JST SPRING, Grant Number JPMJSP2148.

\bibliographystyle{alpha} 
\bibliography{main.bib}

\end{document}